\newcounter{hours}\newcounter{minutes}
\newtheorem{thm}{Theorem}[section]
\newtheorem{lem}[thm]{Lemma}
\newtheorem{prop}[thm]{Proposition}
\newtheorem{conj}[thm]{Conjecture}
\newtheorem{DEF}[thm]{Definition}
\theoremstyle{remark}                  
\newtheorem{rem}[thm]{Remark}
\def\F{{\mathcal F}}
\def\P{{\mathbb P}}
\def\real{{\mathbb R}}
\def\rational{{\mathbb Q}}
\def\Indicator{{\mathbbm{1}}}
\def\ep{\varepsilon}
\def\al{\alpha}
\def\del{\delta}
\def\om{\omega}
\def\Om{\Omega}
\def\gam{\gamma} 
\def\lam{\lambda}
\def\Lam{\Lambda}
\def\sig{\sigma}
\def\diam{\textnormal{diam}}
\def\Tr{\textnormal{Tr}}
\def\Id{\textnormal{Id}}
\def\union{\bigcup}
\def\intersect{\bigcap}
\newcommand{\abs}[1]{\left| #1 \right|}
\newcommand{\ds}{\displaystyle}
\newcommand{\norm}[1]{\lVert#1\rVert}
\newcommand{\inner}[2]{#1 \cdot #2}
\begin{document}

\title{Stochastic Homogenization for Some Nonlinear Integro-Differential Equations}
\author{Russell W. Schwab}
\begin{abstract}
In this note we prove the stochastic homogenization for a large class of fully nonlinear elliptic integro-differential equations in stationary ergodic random environments.  Such equations include, but are not limited to Bellman equations and the Isaacs equations for the control and differential games of some pure jump processes in a random, rapidly varying environment. The translation invariant and non-random effective equation is identified, and the almost everywhere in $\om$, uniform in $x$ convergence of the family solutions of the original equations is obtained.   Even in the linear case of the equations contained herein the results appear to be new.
\end{abstract}
\address{Department of Mathematical Sciences; 6113 Wean Hall; Carnegie Mellon University; Pittsburgh, PA 15213}
\email{rschwab@andrew.cmu.edu}
\date{\today}
\thanks{RWS was partially supported by a NSF Postdoctoral Research Fellowship, grant DMS-0903064.}
\keywords{Homogenization, Levy Processes, Jump Processes, Nonlocal Elliptic Equations, Obstacle Problem, Optimal Stochastic Control }
\subjclass[2000]{35J99, 
45J05, 
47G20, 
49L25, 
49N70, 
60J75, 
93E20 
}

\maketitle
\baselineskip=14pt
\pagestyle{headings}		
\markboth{Russell W. Schwab}{Stochastic Integro-Differential Homogenization}

\section{Introduction And Main Result}\label{sec:Intro}
\setcounter{equation}{0}

\subsection{A brief Introduction}
In this note we present the homogenization for viscosity solutions of a stochastic family of nonlinear, integro-differential equations given by
\begin{equation}\label{eq:PIDEmain}
\begin{cases}
\ds F(u^\ep,\frac{x}{\ep},\om)=0 &\text{ in } D\\
u^\ep=g &\text{ on } \real^n\setminus D,
\end{cases}
\end{equation}
where $D$ is an open, bounded domain in $\real^n$ and $\om\in\Om$ for some probability space, $(\Om,\F,\P)$.
In this context the operator, $F$, will take the form:
\begin{equation}\label{eq:FeqFormatIntro}
\begin{split}
&F(u,\frac{x}{\ep},\om) = \\
&\ \ \inf_{\al}\sup_{\beta} \big\{ f^{\al\beta}(\frac{x}{\ep},\om) + \int_{\real^n} (u(x+y)+u(x-y)-2u(x))K^{\al\beta}(\frac{x}{\ep},y,\om)dy\big\},
\end{split}
\end{equation}
with $K^{\al\beta}(x,y,\om)$ satisfying particular assumptions below.  Here the coefficients of the equation, appearing as the kernels $K^{\al\beta}(x,y,\om)$ as well as the forcing terms $f^{\al\beta}(x,\om)$, form a stationary ergodic family (in the variable, $x$) for $\om\in\Om$, with respect to an ergodic group of transformations, $\tau_x:\Om\to\Om$ (elaborated below, in (\ref{eq:AssumptionStationaryK})-(\ref{eq:AssumptionErgodic})).

Such operators appear as the infinitesimal generators of pure jump processes (and the generators of the corresponding optimal control problems and differential games-- see \cite{Chan-99, Hans-07, OkSu-07, OkSu-2009RiskIndifferenceJumpDiffusion, Pham-98} and the references therein for \emph{similar} operators to (\ref{eq:FeqFormatIntro}) in a deterministic environment in the context of, e.g. Mathematical Finance) in a rescaled random media with non-homogeneous jump distributions randomly given as $K^{\al\beta}(x/\ep,y,\om)dy$.  Roughly speaking, it is expected that the high frequency oscillations of the stationary ergodic family of equations, modeled by the scaling $K^{\al\beta}(x/\ep,y,\om)$, will lead to an averaging property of the solutions of (\ref{eq:PIDEmain}), e.g. they have a limiting behavior towards a translation invariant equation.

Recently, the homogenization for this class of integro-differential equations with rapidly oscillating \emph{periodic} coefficients was proved in \cite{Schw-10Per}.  There it was demonstrated, see \cite[Remark 3.6]{Schw-10Per}, that those methods would generalize to the random setting, if certain Aleksandrov-Bakelman-Pucci type estimates would hold for the equations under consideration.  

We briefly elaborate on this comment.  For the sake of explanation, consider the linear case of (\ref{eq:PIDEmain}) with $L$ as an integro-differential operator given by
\begin{equation}
L(u,x)=\int_{\real^n}\left(u(x+y)+u(x-y)-2u(x)\right)K(x,y)dy,
\end{equation}
and $u_k$ as subsolutions of 
\begin{equation*}
\begin{cases}
\ds L(u_k,x)\geq -g_k(x) &\text{in}\ B\\
u_k\leq0 &\text{on}\ \real^n\setminus B,
\end{cases}
\end{equation*}
where for simplicity we assume $g_k\geq0$.  Since the constant, $0$, is also a subsolution, we may without loss of generality replace $u_k$ by $u_k^+=\max\{u_k,0\}$, and so we assume without loss of generality that $u_k\geq0$.

If for example $g_k\leq1$ (these choices will appear more relevant in the context of Lemma \ref{lem:ZeroContactLimit}) and 
\begin{equation*}
\abs{\{x:g_k(x)>0\}}\to0\ \text{as}\ k\to\infty,
\end{equation*}
then does it follow that $\norm{u_k}_{L^\infty}\to0$ as well?  We make this question for the nonlinear analogue of this situation precise in Proposition \ref{prop:MminusRHSToZero} below, and refer to it as a ``comparison with measurable ingredients'' (cf. \cite{CaCrKoSw-96} or \cite[Chapter 9]{GiTr-98} for second order equations).  Its specific use for homogenization can be seen in Section \ref{sec:SubadditiveAtX0}.  An affirmative answer to this question was noted in \cite[Remark 3.6 and Section 6]{Schw-10Per} as being simultaneously both fundamental to making the methods of homogenization for second order equations in \cite{CaSoWa-05} apply to the fractional order setting and also as a basic result in the analysis of integro-differential which is missing from the current literature.

Some partial results related to ``comparison with measurable ingredients'' have been presented in \cite[Theorem 1.3]{GuSc-12ABParma}, and hence provide a stimulus for the current investigation.  In this work we extend the homogenization results of the periodic case in \cite{Schw-10Per} to the general stationary ergodic case given by (\ref{eq:FeqFormatIntro}).  These methods are generic from the point of view of homogenization, and are simply dependent upon a ``comparison with measurable ingredients'' result to hold true within the corresponding class of elliptic equations.  This is strongly believed to hold in very general settings including the particular one of (\ref{eq:FeqFormatIntro}).  This work draws upon many of the techniques and results built up in \cite{Schw-10Per}, and so it may be considered as a sequel to \cite{Schw-10Per}.

For a general introduction to homogenization, the curious reader should consult the books \cite{BeLiPa-78} and \cite{Jiko-94}, and we will give a more complete list regarding stochastic homogenization in Section \ref{sec:Background}.  For definitions and \emph{basic} results for viscosity solutions of equations related to and or including (\ref{eq:PIDEmain}) and (\ref{eq:PIDEaveraged}), the reader should consult \cite{BaChIm-08Dirichlet}, \cite{BaIm-07}, and \cite[Sections 1-5]{CaSi-09RegularityIntegroDiff}.  For viscosity solutions in the context of first and second order equations the reader should consult \cite{CrIsLi-92}.

\subsection{Main Theorems and Propositions}
The results we prove will show the existence of an effective nonlocal equation such that the family of solutions governed by (\ref{eq:PIDEmain}) converges locally uniformly to the solution of this effective equation.  The important features are that the effective equation is nonlocal, elliptic, and translation invariant, given by
\begin{equation}\label{eq:PIDEaveraged}
\begin{cases}
\ds \Bar F(\Bar u,x)=0 &\text{ in } D\\
\Bar u=g &\text{ on } \real^n\setminus D.
\end{cases}
\end{equation}
This behavior of $u^\ep$ is described in the main theorems of the note:

\begin{thm}\label{thm:main}
Assume (\ref{eq:AssumptionStationaryK})-(\ref{eq:AssumptionfBounded}), (\ref{eq:AssumptionKernelsGS}), and that uniqueness holds for viscosity solutions of (\ref{eq:PIDEmain}). Then there exists a set of full measure, $\Tilde\Om$, and a translation invariant operator, $\Bar F$, which describes a nonlocal ``elliptic'' equation such that for all $\om\in\Tilde\Om$ and any choice of uniformly continuous data, $g$, the solutions of (\ref{eq:PIDEmain}) converge locally uniformly to the unique $\bar u$ which solves (\ref{eq:PIDEaveraged}).  Moreover $\Bar F$ is ``elliptic'' with respect to the same extremal operators as the original operator, $F$, given in (\ref{eq:MminusDef}) and (\ref{eq:MPlusDef}).  
\end{thm}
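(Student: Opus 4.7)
The plan is to adapt to the nonlocal, stochastic setting the perturbed test function method of Evans combined with the obstacle-problem / subadditive-ergodic-theorem construction of the effective operator due to Caffarelli--Souganidis--Wang \cite{CaSoWa-05}, using as a direct blueprint the periodic result in \cite{Schw-10Per}. Concretely, I would introduce the half-relaxed limits $u^*(x) := \varlimsup_{\ep \to 0,\, y\to x} u^\ep(y,\om)$ and the corresponding $u_*$, and aim to show that for $\om$ in a full-measure set $\Tilde\Om$, $u^*$ is a viscosity subsolution and $u_*$ a viscosity supersolution of a single deterministic, translation-invariant equation $\Bar F(\Bar u,x)=0$. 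The assumed uniqueness for (\ref{eq:PIDEaveraged}), together with $u_*\leq u^*$ on $\overline D$ which is automatic from the half-relaxed-limit formalism, will then force $u^*=u_*=\Bar u$, giving the locally uniform convergence asserted.

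For each affine function $\ell(x)=p\cdot x+c$ I would localize on a cube $Q_R$, consider the obstacle problem that produces the largest subsolution of $F(\cdot,x,\om)\leq 0$ lying below $\ell$ (and symmetrically from above), and track a suitable normalized contact-set functional $\mu_R(\om,\ell)$. Stationarity (\ref{eq:AssumptionStationaryK}) under the action of $\tau_x$ combined with pasting of subsolutions makes $\mu_R$ subadditive along the dyadic lattice of cubes of side $R$; the Akcoglu--Krengel subadditive ergodic theorem together with ergodicity (\ref{eq:AssumptionErgodic}) then produces, on a full-measure set, a deterministic limit $m(\ell)$ as $R\to\infty$. Inverting this limit in $c$ defines $\Bar F$ on affine data, and an extension to a general smooth test function $\varphi$ at $x_0$ is obtained by freezing the affine part $D\varphi(x_0)\cdot(\cdot-x_0)+\varphi(x_0)$ while adding the full nonlocal tail coming from $\varphi$ away from $x_0$. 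A countable intersection over a dense family of $\ell$ then yields the single set $\Tilde\Om$. Translation invariance of $\Bar F$ is immediate because $m(\ell)$ does not depend on the center of the cubes used, while the ellipticity of $\Bar F$ between $\M^-$ and $\M^+$ from (\ref{eq:MminusDef})--(\ref{eq:MPlusDef}) is inherited from the corresponding property of $F$ at each $\ep>0$ and passes to the limit.

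The essential new difficulty, and the step I expect to be the main obstacle, is upgrading the pointwise a.s. convergence of $\mu_R$ into an $L^\infty$ control on the approximate correctors strong enough to feed the perturbed test function argument. This is precisely the role of the ``comparison with measurable ingredients'' result (Proposition \ref{prop:MminusRHSToZero}): when the set on which the obstacle solution fails to satisfy $F=0$ has vanishing Lebesgue measure, that proposition promotes the defect to an $L^\infty$ smallness of the correction, exploiting that $F$ dominates $\M^-$ on subsolutions. Without this ingredient, stochastic convergence of the contact set does not translate into a usable uniform corrector and the CSW-type argument breaks down, which is exactly the gap the paper is designed to close in the nonlocal setting.

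With such a corrector in hand the endgame is standard. For smooth $\varphi$ touching $u^*$ from above at $x_0\in D$, I would replace $\varphi$ by $\varphi+\ep\, w^\ep_\ell(\cdot/\ep,\om)$, where $\ell$ is the first-order Taylor expansion of $\varphi$ at $x_0$ and $w^\ep_\ell$ is the approximate corrector from the obstacle construction, evaluate $F$ of this perturbed test function at a maximum of $u^\ep-\varphi-\ep w^\ep_\ell$, and split the nonlocal integral into the contribution of $\ell+\ep w^\ep_\ell$ on a mesoscopic ball (handled by $m(\ell)$ and the uniform smallness of $w^\ep_\ell$) and the fixed tail contribution of $\varphi$ away from $x_0$ (handled by continuity of $\Bar F$ in the nonlocal argument). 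This yields $\Bar F(\varphi,x_0)\leq 0$; the supersolution inequality for $u_*$ is symmetric, and the assumed uniqueness for (\ref{eq:PIDEaveraged}) completes the proof.
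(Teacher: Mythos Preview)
Your overall architecture---half-relaxed limits, obstacle problem, subadditive ergodic theorem on the contact-set measure, Proposition~\ref{prop:MminusRHSToZero} to turn vanishing contact measure into $L^\infty$ smallness of the corrector, perturbed test function method, countable intersection for a single $\Tilde\Om$---matches the paper. But the choice of test data is a genuine gap. You propose to parametrize the effective operator by \emph{affine} functions $\ell(x)=p\cdot x+c$, then ``add the nonlocal tail'' of $\varphi$ afterward. For the operators in (\ref{eq:FeqFormatIntro}), the symmetric second difference satisfies $\del\ell(x,y)=\ell(x+y)+\ell(x-y)-2\ell(x)=0$ for every affine $\ell$, so $L^{\al\beta}\ell\equiv 0$ and your obstacle/subadditive limit $m(\ell)$ cannot depend on $p$ at all; there is nothing to invert. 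This is the nonlocal analogue of the fact that in the second-order CSW argument the test data is a \emph{quadratic} polynomial (a Hessian $M$), not an affine one. Here the operator is genuinely nonlocal, so the relevant test data is the full function $\phi$: the paper's frozen operator (\ref{eq:FrozenOperatorDef}) carries the term $[L^{\al\beta}(\om)\phi(x_0)](x)=\int \del\phi(x_0,y)K^{\al\beta}(x,y,\om)\,dy$, which depends on $\phi$ globally and varies in $x$ through the kernels. The subadditive limit $\bar m^l(\phi)$ and hence $\Bar F(\phi,x_0)$ are built with this whole object fixed, and the countable family one intersects over (Lemma~\ref{lem:CountablePhi}) is a dense set of smooth functions $\phi_k$ in a topology controlling $M^-(\phi-\phi_k)$, not a set of slopes.

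Two smaller points. First, your corrector ansatz $\varphi+\ep\,w^\ep_\ell(\cdot/\ep,\om)$ carries the first-order Hamilton--Jacobi scaling; for an order-$\sigma$ operator the rescaling is $\ep^\sigma w(\cdot/\ep)$ (cf.\ (\ref{eq:AssumptionScaling}) and (\ref{eq:ScaledAndUnscaledObstacle})), and in practice the paper avoids this by solving directly for $v^\ep$ at scale~$1$ and asking that $\|v^\ep\|_{L^\infty}\to 0$ (Proposition~\ref{prop:Corrector}). Second, the sign of the conclusion is reversed: if $\varphi$ touches $(u^\ep)^*$ from above at $x_0$, the subsolution inequality you need is $\Bar F(\varphi,x_0)\geq 0$, and the paper obtains it by contradiction, building from $v^\ep$ a local \emph{super}solution $\phi_k+v^\ep$ of (\ref{eq:PIDEmain}) and using comparison.
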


\begin{thm}\label{thm:general}
Assume (\ref{eq:AssumptionStationaryK})-(\ref{eq:AssumptionEllipticityPointwise}), that uniqueness holds for viscosity solutions of (\ref{eq:PIDEmain}), and that Conjecture \ref{con:ComparisonMeasurable} is true.  Then the same outcome of Theorem \ref{thm:main} holds true. 
\end{thm}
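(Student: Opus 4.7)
The plan is to follow the proof of Theorem \ref{thm:main} almost verbatim, substituting Conjecture \ref{con:ComparisonMeasurable} for the concrete ``comparison with measurable ingredients'' result that is available under the stronger kernel assumption (\ref{eq:AssumptionKernelsGS}). Since Theorem \ref{thm:general} only relaxes structural hypotheses on the kernels down to (\ref{eq:AssumptionEllipticityPointwise}), while retaining exactly the comparison principle that drives the earlier argument, the entire scheme carries through as soon as one checks that every appeal to measurable-ingredient comparison falls within the scope of the conjecture.

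First I would construct the effective operator $\bar F$ pointwise on test functions. For each smooth $\phi$ and each center $x_0$, one localizes $\phi$ and sets up the family of auxiliary obstacle problems from \cite{Schw-10Per} (the nonlocal analogue of the construction in \cite{CaSoWa-05}) on balls of radius $R$, producing random numerical quantities $\mu_R^\pm(\phi, x_0, \om)$. Stationarity (\ref{eq:AssumptionStationaryK})--(\ref{eq:AssumptionErgodic}) converts these into a subadditive process in $R$ over the stationary environment $\tau_x\om$; the Akcoglu--Krengel subadditive ergodic theorem then yields, simultaneously along a countable dense collection of test functions $\{\phi_k\}$ and rational centers, a full-measure set $\Tilde\Om\subset\Om$ on which $\mu_R^\pm(\phi_k, x_0,\om)\to\bar\mu^\pm(\phi_k)$ as $R\to\infty$, with $\bar\mu^\pm$ deterministic. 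Continuity in $\phi$ in a suitable nonlocal seminorm extends the limit to all admissible test functions and produces a translation invariant operator $\bar F$ that inherits ellipticity with respect to the extremals (\ref{eq:MminusDef}) and (\ref{eq:MPlusDef}).

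The critical substitution occurs in the step corresponding to Section \ref{sec:SubadditiveAtX0}: the passage from the a.s.\ convergence of obstacle values to $L^\infty$ convergence of $u^\ep$ toward the solution of (\ref{eq:PIDEaveraged}). One must know that when the non-contact set of the auxiliary obstacle problem has measure tending to zero, the associated solution is itself $L^\infty$-small. This is precisely the content of Conjecture \ref{con:ComparisonMeasurable}, and its assumed validity under (\ref{eq:AssumptionEllipticityPointwise}) is what closes the argument. Combined with the assumed uniqueness for (\ref{eq:PIDEmain}) and the nonlocal half-relaxed-limit machinery from \cite{BaIm-07, CaSi-09RegularityIntegroDiff, Schw-10Per}, this delivers the local uniform convergence $u^\ep\to\bar u$ for every $\om\in\Tilde\Om$.

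The main obstacle is not a new estimate but careful bookkeeping: every appearance of measurable-ingredient comparison in the proof of Theorem \ref{thm:main} -- in the pointwise construction of $\bar F$, in the rigidity step matching $\bar\mu^+ = \bar\mu^-$ that unambiguously identifies the effective operator, and in the final viscosity limit -- must be checked to fit within the class of operators and right-hand sides covered by Conjecture \ref{con:ComparisonMeasurable}. Given the generic form anticipated for the conjecture and the minimality of (\ref{eq:AssumptionEllipticityPointwise}), this verification is expected to be essentially mechanical rather than to require genuinely new analytic input.
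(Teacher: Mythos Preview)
Your proposal is correct and follows exactly the paper's approach: the paper explicitly notes (Section~1.3) that once either Proposition~\ref{prop:MminusRHSToZero} or Conjecture~\ref{con:ComparisonMeasurable} is available, the proofs of Proposition~\ref{prop:Corrector} and hence Theorems~\ref{thm:main} and~\ref{thm:general} are identical, with the only change being the substitution of $M^+_{CS}$ for $M^+_A$ in the extremal operator. One small slip to fix: in your description of where the comparison result enters, it is the \emph{contact} set (not the non-contact set) of the obstacle problem whose measure tends to zero, and the conclusion is that the difference between the obstacle solution and the free solution vanishes in $L^\infty$ (Lemma~\ref{lem:ZeroContactLimit}), which in turn forces $(w^{0,\ep}_l)_*\geq 0$.
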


\begin{rem}
In most stochastic homogenization results, the final set of full measure, $\ds\Tilde\Omega$, on which the convergence happens is an intersection of many auxiliary sets appears along the way in the proof.  For the curious reader, we give a reasonably detailed accounting of the origins of $\ds\Tilde\Omega$ in Remark \ref{rem:OmegaTilde}.
\end{rem}

\begin{rem}\label{rem:HomogVsUniqueness}
In this work we are concerned with proving the homogenization of (\ref{eq:PIDEmain}), and therefore assume that the particular $F$ does indeed admit unique solutions.  The current understanding for uniqueness of (\ref{eq:PIDEmain}) is still incomplete, and we do not focus on the myriad of different assumptions which ensure unique solutions.  Examples of some operators which do admit unique solutions to (\ref{eq:PIDEmain}) were presented in \cite{Schw-10Per}.
\end{rem}

\begin{rem}
In the uniformly elliptic and Hamilton-Jacobi contexts, it is helpful to think of homogenization very loosely as an outcome which is enforced by the solutions' balance of the simultaneous behavior of high frequency oscillations due to the coefficients of the equation and the a priori regularity results imposed by the uniform ellipticity of the equation (or uniform coercivity in the case of Hamilton-Jacobi).  Therefore, it is natural to see the most important assumptions of Theorems \ref{thm:main} and \ref{thm:general} to be aligned with those of stationary ergodicity (oscillations) and regularity (uniform ellipticity) as opposed to assumptions related to uniqueness.
\end{rem}

\begin{rem}
The interested reader should consult \cite[Sections 3-5]{CaSi-09RegularityIntegroDiff} or \cite{BaIm-07} for the basic definitions and properties of viscosity solutions for (\ref{eq:PIDEmain}) and (\ref{eq:PIDEaveraged}).  For a general elliptic nonlocal operator, we use the notion of \cite[Definition 3.1]{CaSi-09RegularityIntegroDiff} for ellipticity. 
\end{rem}

The heart of the homogenization result lies in what is referred to as the solution of the ``corrector'' equation.  This proposition is the main difficulty in proving Theorems \ref{thm:main} and \ref{thm:general} in stationary ergodic environments.  We record it here, and expand upon its motivation and notation below in Section \ref{sec:BackgroundSubSecCorrector}.  All of Sections \ref{sec:SubadditiveAtX0} and \ref{sec:Corrector} are dedicated to its proof.

\begin{prop}[Solving The ``Corrector'' Equation]\label{prop:Corrector}
Assume that the hypotheses of either Theorem \ref{thm:main} or \ref{thm:general} are satisfied.  Let $\phi\in C^{1,1}(\real^n)\intersect L^{\infty}(\real^n)$.  Define the frozen operator at $\phi$ and $x_0$ using (\ref{eq:FrozenOperatorDefLinear}) as:
\begin{equation}\label{eq:FrozenOperatorDef}
F_{\phi,x_0}(v,x,\om):= \inf_{\al}\sup_\beta\left\{f^{\al\beta}(x,\om) + [L^{\al\beta}(\om)\phi(x_0)](x) + [L^{\al\beta}(\om)v(x)](x)\right\}.
\end{equation}
Then there exists a \textbf{unique} number, $\Bar F(\phi,x_0)$, and a set of full measure $\Om_{\phi}\subset\Om$, such that for $\om\in\Om_\phi$ the unique solutions, $v^\ep(\om)$, of
\begin{equation}\label{eq:CorrectorEqDef}
\begin{cases}
\ds F_{\phi,x_0}(v^\ep,\frac{x}{\ep},\om)=\Bar F(\phi,x_0) &\text{ in } B_1(x_0)\\
v^\ep=0 &\text{ on } \real^n\setminus B_1(x_0),
\end{cases}
\end{equation}
also satisfy the \textbf{decay property}
\begin{equation}
\norm{v^\ep}_{L^\infty}\to0\ \  \text{as}\ \ \ep\to0.
\end{equation}
\end{prop}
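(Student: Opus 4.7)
The plan is to construct the candidate $\Bar F(\phi,x_0)$ via a subadditive ergodic argument and then deduce the decay $\norm{v^\ep}_{L^\infty}\to 0$ from the ``comparison with measurable ingredients'' flagged in the introduction (Proposition \ref{prop:MminusRHSToZero} / Conjecture \ref{con:ComparisonMeasurable}). To that end I would attach to each Borel set $A\subset\real^n$ and each $\om\in\Om$ a pair of ``effective right-hand side'' numbers $m^\pm(A,\om)$ defined via obstacle problems for $F_{\phi,x_0}(\cdot,\cdot,\om)$ on $A$ with the zero obstacle on $\real^n\setminus A$; a natural choice is
\[
m^+(A,\om)=\inf\big\{\mu\in\real\,:\,\exists\,w,\ F_{\phi,x_0}(w,x,\om)\leq\mu\text{ in }A,\ w\geq 0\text{ in }\real^n\setminus A\big\},
\]
with $m^-$ defined symmetrically. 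Because $\phi\in C^{1,1}\cap L^\infty$, the frozen term $[L^{\al\beta}(\om)\phi(x_0)](x)$ is uniformly bounded in $(\al,\beta,x,\om)$, and together with the boundedness of $f^{\al\beta}$ this gives $|m^\pm|\leq C$. The stationarity of the $K^{\al\beta}$ and $f^{\al\beta}$ yields $m^\pm(A+z,\om)=m^\pm(A,\tau_z\om)$, and the nonlocal comparison principle gives an approximate subadditivity of $A\mapsto|A|\,m^+(A,\om)$ on dyadic cubes, modulo a controlled boundary-layer error coming from the tails of $K^{\al\beta}$.

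Given these stationarity and subadditivity properties, an Akcoglu-Krengel style subadditive ergodic theorem along dyadic cubes gives, on a set of full measure, convergence of $m^+(tQ,\om)/|tQ|$ as $t\to\infty$; the ergodicity assumption forces the limit to be a deterministic constant $\Bar F^+(\phi,x_0)$, independent of the cube $Q$ by monotonicity. The symmetric argument produces $\Bar F^-(\phi,x_0)$, and the equality $\Bar F^+=\Bar F^-=:\Bar F(\phi,x_0)$ follows from a comparison/uniqueness argument between the two obstacle problems. Intersecting over a countable family of cubes centered at rational points with rational sidelengths yields the set of full measure $\Om_\phi$ asserted in the proposition.

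To deduce $\norm{v^\ep}_{L^\infty}\to 0$, I would use the obstacle-problem solutions from the first step as two-sided barriers at a mesoscopic scale. Given $\del>0$, tile $B_1(x_0)$ with cubes of side much larger than $\ep$ but much smaller than $1$; on each such cube the rescaled subadditive averages $m^\pm$ are close to $\Bar F(\phi,x_0)\pm\del$ except on an exceptional collection of cubes whose total Lebesgue measure vanishes as the tiling refines. The corresponding obstacle solutions serve as sub- and super-solutions of (\ref{eq:CorrectorEqDef}) everywhere except on the exceptional set together with the boundary layers between adjacent cubes, and by the subadditive ergodic theorem the Lebesgue measure of this bad set tends to $0$ as $\ep\to 0$. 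Proposition \ref{prop:MminusRHSToZero} (respectively Conjecture \ref{con:ComparisonMeasurable}) is then exactly the tool that converts the vanishing measure of the bad set into $\norm{v^\ep}_{L^\infty}\to 0$. This last step is the main obstacle: the soft ergodic machinery only controls the defect in measure, and only the measurable-ingredients comparison has the strength to upgrade this to uniform smallness of the solution itself.
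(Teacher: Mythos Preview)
Your high-level plan---subadditive ergodic theorem plus Proposition~\ref{prop:MminusRHSToZero}---is the right architecture, but the subadditive quantity you propose is not the one that works, and this is a genuine gap rather than a detail. You take $m^+(A,\om)=\inf\{\mu:\exists\,w,\ F_{\phi,x_0}(w,\cdot,\om)\leq\mu\text{ in }A,\ w\geq0\text{ on }\real^n\setminus A\}$ and claim approximate subadditivity of $|A|\,m^+(A,\om)$. Subadditivity here would follow either from a monotonicity $m^+(A)\leq m^+(B_i)$ for $B_i\subset A$, or from gluing admissible supersolutions across the $B_i$; neither is available. An admissible $w$ for $A$ need not be $\geq0$ on $A\setminus B_i$, so it is not admissible for $B_i$; conversely a supersolution on $B_i$ is not one on $A$; and the minimum of two supersolutions is not a supersolution for a nonconvex inf--sup operator. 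The tail ``boundary-layer error'' you invoke does not close this. (Also, since $m^+$ is bounded, the limit you write as $m^+(tQ,\om)/|tQ|$ would be identically zero; the intended limit is $m^+(tQ,\om)$ itself.)

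The paper instead fixes a level $l\in\real$, solves the obstacle problem $U^l_A=\inf\{u:F_{\phi,0}(u,\cdot,\om)\leq l\text{ in }A,\ u\geq0\text{ in }\real^n\}$, and takes as its subadditive quantity the \emph{measure of the contact set} $M^l(A,\om)=|\{U^l_A=0\}|$. Subadditivity is then a one-line consequence of the domain monotonicity $U^l_{B_i}\leq U^l_A$ (any admissible supersolution on $A$ is one on $B_i\subset A$), with no error term. The subadditive theorem yields a deterministic density $\bar m^l$, monotone in $l$, and one \emph{defines} $\Bar F(\phi,0):=\sup\{l:\bar m^l=0\}$. Decay is proved directly at scale $1$, without a mesoscopic tiling: when $\bar m^l=0$ the difference $u^{\ep,l}-w^{0,\ep}_l$ satisfies $M^+\geq -C\Indicator_{\{\text{contact}\}}$ and Proposition~\ref{prop:MminusRHSToZero} gives $(w^{0,\ep}_l)_*\geq0$; when $\bar m^l>0$ an Egorov/translation argument shows contact occurs in every subcube, and the uniform H\"older regularity of $u^{\ep,l}$ forces $(w^{0,\ep}_l)^*\leq0$. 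Finally, the passage from $x_0=0$ to generic $x_0$ is \emph{not} handled by intersecting over rational cubes---the translation $\tau_{x_0/\ep}\om$ throws $\om$ off the good set---but by a Kosygina--Rezakhanlou--Varadhan lemma that produces a nearby $\hat x^\ep$ with $\tau_{\hat x^\ep/\ep}\om$ in a uniform-convergence set, followed by boundary H\"older regularity to compare the solutions on $Q_1(x_0)$ and $Q_1(\hat x^\ep)$.
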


The main technical lemma which allows for the leap from the periodic to the stationary ergodic settings in the proof of Proposition \ref{prop:Corrector} is the ``comparison with measurable ingredients'' result.  It is a direct corollary of the Aleksandrov-Bakelman-Pucci type estimate recently proved in \cite[Theorem 1.3]{GuSc-12ABParma} (also listed here in Section \ref{sec:Appendix} for convenience).

\begin{prop}[Comparison With Measurable Ingredients]\label{prop:MminusRHSToZero}
Assume that (\ref{eq:AssumptionKernelsGS}) holds.  Suppose that $g_\ep\in C(B)$, $\norm{g_\ep}_{L^\infty}\leq C$, and the sequence $\{v_\ep\}$ solves in the viscosity sense
\begin{equation}\label{eq:ComparisonMeasIngr}
\begin{cases}
\ds M_A^+(v_\ep,x)\geq -g_\ep(x) &\text{in}\ B\\
v_\ep\leq0 &\text{on}\ \real^n\setminus B,
\end{cases}
\end{equation}
where $M^+_A$ is defined in (\ref{eq:MplusSpecialA}).  If $\abs{\{g_\ep>0\}}\to0$ as $\ep\to0$, then $\norm{v_\ep}\to0$ as $\ep\to0$.
\end{prop}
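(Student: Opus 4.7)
The plan is to reduce this statement directly to the nonlocal Aleksandrov--Bakelman--Pucci (ABP) estimate for $M_A^+$ that is recorded in Section \ref{sec:Appendix} (taken from \cite[Theorem 1.3]{GuSc-12ABParma}). Once one has such an ABP bound in the form
\[
\sup_B v^+ \;\leq\; C(n,B,A)\, \|g^+\|_{L^n(\{v=\Gamma_v\})},
\]
where $\Gamma_v$ is the concave envelope of $v^+$ (extended by zero) on a suitable enlargement of $B$ and $A$ is the class of kernels in assumption (\ref{eq:AssumptionKernelsGS}), the proposition is a one-line consequence of the hypothesis $|\{g_\ep>0\}|\to0$ together with the uniform $L^\infty$ bound on $g_\ep$.

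More precisely, I would proceed as follows. First I would observe that, as in the standard ABP argument, at every point $x$ of the contact set $\{v_\ep=\Gamma_{v_\ep}\}$ there is a concave paraboloid touching $v_\ep$ from above; evaluating the inequality (\ref{eq:ComparisonMeasIngr}) against this test function gives
\[
-g_\ep(x)\;\leq\; M_A^+(v_\ep,x) \;\leq\; 0,
\]
so that $g_\ep(x)\geq0$ at every contact point. Hence the contact set is contained in $\{g_\ep\geq0\}$, and consequently (up to a null set) in $\{g_\ep>0\}$. Then the ABP estimate applied to $v_\ep$ yields
\[
\sup_B v_\ep^+
\;\leq\; C\,\|g_\ep\|_{L^n(\{v_\ep=\Gamma_{v_\ep}\})}
\;\leq\; C\,\|g_\ep\|_{L^n(\{g_\ep>0\})}
\;\leq\; C\,\|g_\ep\|_{L^\infty}\,|\{g_\ep>0\}|^{1/n}.
\]
By the hypotheses $\|g_\ep\|_{L^\infty}\leq C$ and $|\{g_\ep>0\}|\to0$, the right-hand side tends to $0$, so $\sup_B v_\ep^+\to0$. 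Combined with the exterior condition $v_\ep\leq 0$ on $\real^n\setminus B$, this gives $\|v_\ep^+\|_{L^\infty(\real^n)}\to 0$, which is the intended meaning of $\|v_\ep\|\to0$ (the one-sided bound is the only one meaningful here, since $M_A^+ v_\ep\geq -g_\ep$ does not control $v_\ep$ from below).

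The main (and essentially only) obstacle is the ABP estimate itself; once it is cited in the form above from the appendix, the argument is a short chain of H\"older-type inequalities. Two minor points deserve care: (i) verifying that the class of kernels under assumption (\ref{eq:AssumptionKernelsGS}) is admissible for the ABP result of \cite{GuSc-12ABParma}, and (ii) making sure the argument that forces the contact set into $\{g_\ep\geq0\}$ is legitimate in the viscosity sense, i.e.\ that a touching paraboloid can be used as a test function inside the supersolution inequality $M_A^+(v_\ep,x)\geq -g_\ep(x)$. Both points are standard and built into the ABP statement being invoked, so no further work beyond the display above is required.
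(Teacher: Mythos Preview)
Your overall plan is exactly the paper's: Proposition~\ref{prop:MminusRHSToZero} is stated as a \emph{direct corollary} of the nonlocal ABP estimate from \cite{GuSc-12ABParma}, recorded in the appendix as Theorem~\ref{thm:ABP}, and no further argument is given. So the strategy is right.

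Two corrections, however. First, the ABP estimate you quote is not the one actually available here. Theorem~\ref{thm:ABP} does \emph{not} give a bound of the form $\sup_B v^+\leq C\|g^+\|_{L^n(\{v=\Gamma_v\})}$; that is the classical second-order ABP shape. The nonlocal estimate in the appendix reads
\[
\sup_B v \;\leq\; \frac{C(n)}{\lambda}\,\diam(B)\,\|g\|_{L^\infty}^{(2-\sigma)/2}\,\|g\|_{L^n}^{\sigma/2},
\]
with an interpolation between $L^\infty$ and $L^n$ and no contact set on the right-hand side. Second, and relatedly, your detour through the contact set is unnecessary (and the claim that the contact set lies in $\{g_\ep>0\}$ ``up to a null set'' is not justified: you only get $\{g_\ep\geq 0\}$, which could be all of $B$). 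The clean route is to observe at the outset that $M_A^+(v_\ep,x)\geq -g_\ep(x)\geq -g_\ep^+(x)$, so Theorem~\ref{thm:ABP} applies with $g_\ep^+$ in place of $g_\ep$, giving
\[
\sup_B v_\ep \;\leq\; C\,\|g_\ep^+\|_{L^\infty}^{(2-\sigma)/2}\,\|g_\ep^+\|_{L^n}^{\sigma/2}
\;\leq\; C\,\|g_\ep\|_{L^\infty}^{(2-\sigma)/2}\,\bigl(\|g_\ep\|_{L^\infty}\,|\{g_\ep>0\}|^{1/n}\bigr)^{\sigma/2}\longrightarrow 0.
\]
Combined with $v_\ep\leq 0$ on $\real^n\setminus B$, this yields $\|v_\ep^+\|_{L^\infty}\to 0$, which is the intended conclusion. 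Your remark that only a one-sided bound is meaningful here is correct.
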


It is widely expected that Proposition \ref{prop:MminusRHSToZero} holds in much more general circumstances, but to date has only been proved in the setting mentioned above.  We therefore include these more general circumstances in Theorem \ref{thm:general} and list the needed comparison result here as a conjecture.

\begin{conj}[General Comparison With Measurable Ingredients]\label{con:ComparisonMeasurable}
Assume (\ref{eq:AssumptionSymmetry}) and (\ref{eq:AssumptionEllipticityPointwise}), then the outcome of Proposition \ref{prop:MminusRHSToZero} holds true with the operator $M^+_A$ replaced by $M^+_{CS}$  (defined in (\ref{eq:MPlusDef})), which is the appropriate extremal operator for (\ref{eq:AssumptionEllipticityPointwise}).
\end{conj}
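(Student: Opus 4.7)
The plan is to reduce Conjecture~\ref{con:ComparisonMeasurable} to an Alexandrov--Bakelman--Pucci (ABP) inequality for the full Caffarelli--Silvestre extremal operator $M^+_{CS}$, mirroring the way Proposition~\ref{prop:MminusRHSToZero} is extracted from the ABP estimate of \cite[Theorem 1.3]{GuSc-12ABParma} for the restricted class $M^+_A$. Concretely, the goal is to establish an estimate of the form
\begin{equation*}
\sup_B v_\ep^+ \leq C\, \norm{g_\ep^+}_{L^n(\{v_\ep=\Gamma_\ep\})},
\end{equation*}
where $\Gamma_\ep$ is the concave envelope of $v_\ep^+$ extended by zero outside $B$, and $C$ depends only on $\diam(B)$, $n$, $\sigma$, $\lam$, and $\Lam$. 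Granted such an estimate, the conjecture is immediate: the $L^\infty$ bound on $g_\ep$ together with $\abs{\{g_\ep>0\}}\to 0$ yields
\begin{equation*}
\norm{g_\ep^+}_{L^n(\{v_\ep=\Gamma_\ep\})}\leq \norm{g_\ep}_{L^\infty}\abs{\{g_\ep>0\}}^{1/n}\longrightarrow 0,
\end{equation*}
and combined with $v_\ep\leq 0$ outside $B$ this delivers $\norm{v_\ep}_{L^\infty}\to 0$.

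The crux, and the real obstacle, is the ABP estimate itself for $M^+_{CS}$. I would follow the classical nonlocal ABP template of \cite{CaSi-09RegularityIntegroDiff}: (i) form the concave envelope $\Gamma_\ep$ of $v_\ep^+$ and localize to the contact set $\{v_\ep=\Gamma_\ep\}$; (ii) at each contact point $x_0$, use the viscosity inequality $M^+_{CS}(v_\ep,x_0)\geq -g_\ep(x_0)$ together with the fact that $\Gamma_\ep$ touches $v_\ep$ from above at $x_0$ to extract a pointwise lower bound on a ``nonlocal Hessian'' of $\Gamma_\ep$; (iii) convert this into a Jacobian-type estimate for the gradient map $\grad\Gamma_\ep$ and integrate over the contact set, noting that $\grad\Gamma_\ep$ must cover a ball of radius comparable to $\diam(B)^{-1}\sup v_\ep^+$. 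Step (ii) is where the passage from $M^+_A$ to $M^+_{CS}$ fails in current technology: for $M^+_A$ the specific kernel in \cite{GuSc-12ABParma} carries enough symmetry that one can identify the ``worst'' kernel and bound the nonlocal Hessian from below, but in the full $M^+_{CS}$ class the infimum runs over all kernels with only $(2-\sigma)\lam\abs{y}^{-n-\sigma}\leq K(y)\leq (2-\sigma)\Lam\abs{y}^{-n-\sigma}$, and that infimum may concentrate mass in directions orthogonal to the opening of the touching paraboloid, threatening the pointwise lower bound needed to drive the Jacobian estimate.

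An alternative route avoiding the pointwise ABP machinery is to apply the weak Harnack inequality for $M^-_{CS}$-supersolutions from \cite{CaSi-09RegularityIntegroDiff} to $-v_\ep$ and combine it via H\"older's inequality with the smallness of $\abs{\{g_\ep>0\}}$. The main difficulty here is that the weak Harnack is only a local $L^\delta$-decay statement; promoting it to global $L^\infty$ decay requires a covering/iteration argument with constants that remain uniform as $\sigma\to 2^-$, which is itself delicate because the nonlocal problem degenerates to the classical second-order one in that limit.
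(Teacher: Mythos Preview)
The statement you are addressing is explicitly a \emph{conjecture} in the paper: the paper does not prove it, and in fact Theorem~\ref{thm:general} is conditional on it. So there is no ``paper's own proof'' to compare against. Your proposal correctly identifies the natural reduction (an ABP-type estimate for $M^+_{CS}$ in place of $M^+_A$) and correctly pinpoints the obstruction: in step~(ii) of the nonlocal ABP template the infimum over all kernels satisfying (\ref{eq:AssumptionEllipticityPointwise}) may allocate mass in directions that defeat the pointwise Hessian-type lower bound. But you do not close this gap, and neither does the paper; your write-up is an honest description of why the conjecture is open rather than a proof. The alternative weak-Harnack route you sketch is also not completed, and you again flag the difficulty (uniformity as $\sigma\to 2^-$ and the local-to-global passage).

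In short: there is no error to report, but also no proof. Your analysis is in line with the paper's own assessment that this result ``is strongly believed to hold in very general settings'' yet ``is missing from the current literature.''
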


\subsection{Organization of The Paper} 
It is worth commenting on the presentation of the proofs of Theorems \ref{thm:main} and \ref{thm:general}.  In fact, as soon as either Proposition \ref{prop:MminusRHSToZero} or Conjecture \ref{con:ComparisonMeasurable} hold true, there is no difference in the proof of Proposition \ref{prop:Corrector} and hence also the two main theorems.  For this reason, we have chosen to present the proof of Proposition \ref{prop:Corrector} in the most general setting.  In this case, the reader can appropriately substitute the particular extremal operators, $M^{-+}_A$ or $M^{-+}_{CS}$, for $M^{-+}$ in the remainder of the note.  The only difference being in the former, one is operating under Proposition \ref{prop:MminusRHSToZero} and (\ref{eq:AssumptionKernelsGS}) which is known to be true, and in the latter one is operating under Conjecture \ref{con:ComparisonMeasurable} and (\ref{eq:AssumptionEllipticityPointwise}).  The proof of Proposition \ref{prop:Corrector} in the general case is the content of Sections \ref{sec:SubadditiveAtX0} and \ref{sec:Corrector}.  Section \ref{sec:EffectiveAndProof} uses Proposition \ref{prop:Corrector} to complete the proofs of Theorems \ref{thm:main} and \ref{thm:general}.  The Appendix, Section \ref{sec:Appendix} is used to collect helpful background results required for the rest of the paper.

\subsection{Assumptions}\label{sec:AssumptionsForHomg}
Here we list the assumptions on $F$.

\textbf{\underline{Stationary Ergodic:}}
$K^{\al\beta}(x,y,\om):\real^n\times\real^n\times\Om\to \real$ is stationary if
\begin{align}
&(\Om,\F,\P)\ \text{is a probability space}\nonumber\\
&\text{there is a group of measure preserving transformations,}\ \tau_x:\Om\to\Om\ \text{, for}\ x\in\real^n,\nonumber \\
&\text{and}\ K^{\al\beta}\ \text{satisfies the translation relationship},\ K^{\al\beta}(x+z,y,\om)= K^{\al\beta}(x,y,\tau_z\om).\label{eq:AssumptionStationaryK}\\
\intertext{Similarly we use this definition for}
&f^{\al\beta}:\real^n\times\Omega\to\real,\ \ f^{\al\beta}(x+z,\om)=f^{\al\beta}(x,\tau_z\om)\label{eq:AssumptionStationaryf}
\end{align}

Further, the family is stationary ergodic if it is stationary and also the group $\tau_x$ acts on $\Om$ ergodically in the sense that the only invariant sets of $\tau$ are either trivial or full measure, i.e. the following assertion holds
\begin{align}\label{eq:AssumptionErgodic}
&\text{if for all}\ z,\ \tau_z^{-1}E\subset E,\ \text{then}\ \P(E)=0\ \text{or}\ \P(E)=1.
\end{align}

\textbf{\underline{Boundedness of $f^{\al\beta}$:}} It is important that $F(0,x,\om)$ is uniformly bounded, and so we assume
\begin{equation}\label{eq:AssumptionfBounded}
\norm{f^{\al\beta}(\cdot,\om)}_{L^\infty}\leq C \ \ \forall\ \al,\beta,\om.
\end{equation}

\textbf{\underline{Scaling:}} In order that the rescaling $\ep^\sig u(\cdot/\ep)$ maps solutions between domains of size $1/\ep$ and $1$ with the correctly scaled coefficients, it is necessary to assume that the operator $F$ has an appropriate scaling.  Here we assume the scaling as
\begin{equation}\label{eq:AssumptionScaling}
K^{\al\beta}(x,\lam y,\om) = \lam^{-n-\sig}K^{\al\beta}(x,y,\om).
\end{equation}

\textbf{\underline{Ellipticity:}} Going along with ellipticity, there is also an assumption of symmetry for the kernels -- it is simply to allow us to work with operators which do not have a drift.  This assumption appeared in \cite[Section 2]{CaSi-09RegularityIntegroDiff} concerning the related regularity theory for e.g. (\ref{eq:PIDEmain}). This requirement is
\begin{equation}\label{eq:AssumptionSymmetry}
K^{\al\beta}(x,-y)=K^{\al\beta}(x,y).
\end{equation}
The notion of ellipticity comes from extremal operators which control the difference of the operator evaluated on two different functions, as in \cite[Section 3]{CaSi-09RegularityIntegroDiff}.  It says that there are concave, respectively convex, extremal operators, $M^-$, respectively $M^+$, such that
\begin{equation*}
M^-(u-v,x)\leq F(u,x)-F(v,x)\leq M^+(u-v,x).
\end{equation*}

Here we present two of the main classes of elliptic operators:\   

\underline{Ellipticity requirement 1:} one family is those operators, treated in \cite{CaSi-09RegularityIntegroDiff}, which are formed by using kernels that are pointwise comparable to the kernel of the fractional Laplacian,
\begin{equation}\label{eq:AssumptionEllipticityPointwise}
\lam\abs{y}^{-n-\sig}\leq K^{\al\beta}(x,y)\leq \Lam\abs{y}^{-n-\sig}.
\end{equation}
This family has extremal operators given as
\begin{equation}\label{eq:MminusDef}
M_{CS}^-(u,x) = \inf_{\lam\abs{y}^{-n-\sig}\leq K^{\al\beta}(y)\leq \Lam\abs{y}^{-n-\sig}}\left\{\int_{\real^n}\del u(x,y) K^{\al\beta}(y)dy \right\}
\end{equation}
and
\begin{equation}\label{eq:MPlusDef}
M_{CS}^+(u,x) = \sup_{\lam\abs{y}^{-n-\sig}\leq K^{\al\beta}(y)\leq \Lam\abs{y}^{-n-\sig}}\left\{\int_{\real^n}\del u(x,y) K^{\al\beta}(y)dy \right\}.
\end{equation}

\noindent
We have used the shorthand notation 
\begin{equation}\label{eq:DeltauNotation}
\del u(x,y):= u(x+y)+u(x-y)-2u(x)
\end{equation}
to abbreviate the writing of the integro-differential terms (a convenient consequence of (\ref{eq:AssumptionSymmetry})).

\underline{Ellipticity requirement 2:} the second family, treated in \cite{GuSc-12ABParma}, is smaller than the first, but not completely contained within it.  It consists of kernels which are quadratic modifications of the fractional Laplacian, given as
\begin{align}\label{eq:AssumptionKernelsGS}
&K^{\al\beta}(x,y) = \frac{y^TA^{\al\beta}(x)y}{\abs{y}^{n+\sig+2}}\\ 
&\text{where}\ \ A^{\al\beta}(x)\geq0,\ \Tr(A^{\al\beta})\geq \lam,\ \text{and}\ A^{\al\beta}\leq \Lam\Id.\nonumber
\end{align}

\noindent
This yields the relevant extremal operators as 
\begin{align}
M^-_A(u,x)&=\inf_{\Tr(A)\geq\lam\ \text{and}\ A\leq \Lam\Id}\left\{\int_{\real^n}\del u(x,y)\frac{y^T A y}{\abs{y}^{n+\sig+2}}dy\right\}\label{eq:MminusSpecialA}\\
\intertext{and}
M^+_A(u,x)&=\sup_{\Tr(A)\geq\lam\ \text{and}\ A\leq \Lam\Id}\left\{\int_{\real^n}\del u(x,y)\frac{y^T A y}{\abs{y}^{n+\sig+2}}dy\right\}.\label{eq:MplusSpecialA}
\end{align}

\subsection{Notation}

\begin{enumerate}
\item The second difference operator: $\del u(x,y):= u(x+y)+u(x-y)-2u(x)$.
\item v is $C^{1,1}$ from above at $x$  (respectively from below) \cite[Definition 2.1]{CaSi-09RegularityIntegroDiff} if there exists a radius $r$, a vector $p$, and a constant $M$ such that for all $\abs{y}\leq r$,
\begin{equation*}
v(x+y)-v(x)-p\cdot y \leq M\abs{y}^2\ \ (\text{respectively}\  \geq -M\abs{y}^2).
\end{equation*}
If $v$ is $C^{1,1}$ from above and below at $x$, we say $v\in C^{1,1}(x)$
\item The maximal and minimal operators, $M^-$ and $M^+$ are defined in (\ref{eq:MminusDef}) and (\ref{eq:MPlusDef}) as well as (\ref{eq:MminusSpecialA}) and (\ref{eq:MplusSpecialA}).
\item The half relaxed limits $(u^\ep)^*$ and $(u^\ep)_*$ are
\begin{align*}
(u^\ep)^*(x) = \lim_{\ep\to0}\sup_{\{\del\leq\ep,\ \abs{x-y}\leq\ep\}}u^\del(y);\ \  
(u^\ep)_*(x) = \lim_{\ep\to0}\inf_{\{\del\leq\ep,\ \abs{x-y}\leq\ep\}}u^\del(y).
\end{align*}
\item The contact set of an obstacle problem $K(A)=\{U_A=0\}$ where $U_A$ is defined in (\ref{eq:ObstacleGeneric}).
\item The ball of radius $r$ is $B_r(x)\subset\real^n$, and the cube of radius $r$ is \[Q_r(x)=(x-r/2,x+r/2)^n\subset\real^n\]
\end{enumerate}

\section{Background and Main Ideas}\label{sec:Background}
\setcounter{equation}{0}

\subsection{Background}

Stochastic Homogenization for fully nonlinear equations is an important field, which although is currently not nearly as well  studied as the corresponding one for linear equations, seems to be expanding quickly and gaining interest.  The study of stochastic homogenization of linear equations goes back at least to \cite{Kozl-1979AveragingRandOp}, \cite{PaVa-1979BoundaryValProb}, and \cite{PaVa-1982DiffusionRandomCoeff}, and the case of nonlinear equations to \cite{BeBl-1988ControlDiffRandMed} and \cite{DaMo-86bNonLinStochHomogAndErgodic}.  Similarly, the case of stochastic homogenization is not nearly as well studied as that of the periodic case.   Here we give a list of the related results for \emph{stochastic} homogenization.  The list for the periodic setting is much longer, and we do not attempt at a presentation.  For first order equations and ``viscous'' versions of Hamilton-Jacobi equations with convex nonlinearities (``viscous'' being a second order equations whose limit is a first order equation), there are the works of: \cite{KoReVa-06Homog}, \cite{KoVa-06HomogTimeDep}, \cite{LiSo-03Correctors}, \cite{LiSo-05HomogVisc}, \cite{LiSo-10HomogRevisit}, \cite{NoNo-11RandHomogG}, \cite{RezTar-00}, \cite{Schw-09}, \cite{Soug-99}.  In the realm of nonlinear second order elliptic equations, the results are much fewer with basically \cite{CaMe-09HomogObs}, \cite{CaSo-10HomogRate}, \cite{CaSoWa-05}.  Finally, moving to the nonlocal equations, much less has been done in the stochastic setting.  For homogenization of random obstacle problems for a fractional operator there are the works of \cite{CaMe-08HomogFracObs} and  \cite{Foca-2010AperiodicFractionalObs}.  For homogenization for any equations related to (\ref{eq:PIDEmain}), even for the linear version of $F$, there seems to be no existing literature.

Proving Theorem \ref{thm:main} contains two separate steps.  First, one must identify how to extract the influence on $u^\ep$ of the averaging property of the equation itself (from the stationary ergodic family $f^{\al\beta}$ and $K^{\al\beta}$).  This comes with a good choice for an expansion of $u^\ep$ and the identification of the ``corrector'' equation as the main tool to identify the limit equation for $\bar u$.  Moreover this method must also be compatible with the notion of convergence for the weak solutions, $u^\ep$.  Second, one must actually prove that the ``corrector'' equation has a solution.

In the context of elliptic equations (both Hamilton-Jacobi and second order elliptic equations), the correct expansion to extract the averaging properties of $u^\ep$ has been more or less known since the seminal book \cite[Chapter 1, Section 2]{BeLiPa-78} and was first used in nonlinear elliptic equations in \cite{LiPaVa-88unpublished}.  Correspondingly the operator appearing in the corrector equation for first and second order equations has been known for as long as those references, and possibly even longer.  Two recent developments paved the way for the stochastic homogenization for \emph{nonlocal equations}.  First was the realization that the expansion does not require one $v$ to be rescaled and used for all $\ep$ simultaneously, but rather a whole sequence of $v^\ep$ will suffice (see \cite[Section 1]{LiSo-03Correctors}, also mentioned in \cite[Proposition 7.3]{LiSo-05HomogVisc}, and fundamentally used in \cite[Sections 1 and 3]{CaSoWa-05}).  Second was the observation in \cite[Section 2.1]{Schw-10Per} of how the heuristic expansion for $u^\ep$ identifies the appropriate new operator, $F_{\phi,x_0}$, for the solution of the ``corrector'' equation in the nonlocal periodic setting.  The operator identified there is the same one which is used for the ``corrector'' equation in the current work.  The main contribution of this note is the solution of the ``corrector'' equation in the nonlocal, stationary ergodic setting (Proposition \ref{prop:Corrector}).  The rest of the homogenization result is a very straightforward application or minor modification of the existing techniques.

\subsection{Main Ideas For Proposition \ref{prop:Corrector}}\label{sec:BackgroundSubSecCorrector}

The main ingredient of the proof of Theorems \ref{thm:main} and \ref{thm:general} is the solution to the ``corrector'' equation, Proposition \ref{prop:Corrector}.  A reasonably detailed explanation for this proposition was presented in \cite[Section 2.1]{Schw-10Per} (and is more useful when read together with the discussion of \cite[Section 1]{CaSoWa-05}).  We briefly mention here the key ideas.

The key step in proving Theorem \ref{thm:main} is to decide for any admissible test function, $\phi$, and any $x$ fixed, whether or not $\phi$ satisfies 
\begin{align*}
\Bar F(\phi,x) \geq0\ \ 
\text{or}\ \ 
\Bar F(\phi,x) \leq0.
\end{align*}
(This is simply the statement that we know the set where $\Bar F=0$ over the class of necessary test functions.)  The correct choice of inequality(ies) is enforced by the behavior of $u^\ep$ and $\bar u$, and in particular whether or not $\bar u-\phi$ can have a local maximum or minimum at $x$ (or both).  This information is encoded in $u^\ep$ and (\ref{eq:PIDEmain}).  We can extract it informally with an expansion of $u^\ep$.

Heuristically, the correct ansatz for $u^\ep$ is
\begin{equation}\label{eq:Ansatz}
u^{\ep}(x) = \bar u(x)+ \ep^{\sig}v(\frac{x}{\ep}) + o(\ep^\sig).
\end{equation}
To better recognize the two scales inherent in the operators (local and global variables) when using the expansion (\ref{eq:Ansatz}), we rewrite the integro-differential terms (for a \emph{generic $\phi$}) as
\begin{equation}\label{eq:FrozenOperatorDefLinear}
[L^{\al\beta}(\om)\phi(z)](x) = \int_{\real^n} (\phi(z+y)+\phi(z-y)-2\phi(z))K^{\al\beta}(x,y,\om)dy,
\end{equation}
where $z$ is the location of the center of the second difference, and $x$ is the variable in the coefficients, $K^{\al\beta}(x,y,\om)$ (one should note that using $z=x$ in $[L^{\al\beta}(\om)\phi(x)](x)$ gives back the expressions in (\ref{eq:FeqFormatIntro})).  Plugging in (\ref{eq:Ansatz}) into, e.g., the linear case of (\ref{eq:PIDEmain}) with $f^{\al\beta}=0$ and using (\ref{eq:AssumptionScaling}) to scale the integro-differential terms with $\ep^\sig v(\cdot/\ep)$, suppressing the $\om$ from the notation, we obtain
\begin{equation*}
[Lu^\ep(x)](\frac{x}{\ep})=
[L\bar u(x)](\frac{x}{\ep}) + [Lv(\frac{x}{\ep})](\frac{x}{\ep}).
\end{equation*}
If we could possibly find a special function, $v$, which would make this right hand side independent of $\ep$, we would have an equation that reads (thanks also to $[Lu^\ep(x)](x/\ep)=0$)
\begin{equation*}
\Bar F(\bar u,x) = 0,
\end{equation*}
where $\Bar F(\bar u,x)=[L\bar u(x)](x/\ep) + [Lv(x/\ep)](x/\ep)$.  Although this is not exactly possible, we can push the motivation a little further.  As $\ep\to0$, we see that $x$ can be considered a fixed parameter, and the true variable of interest is $y=x/\ep$.  Then we see, we are looking to find a particular $v$ (as a function of $y$!) such that 
\begin{equation}\label{eq:CorrectorMotivation}
[L\bar u(x)](y) + [Lv(y)](y)=\ds constant.
\end{equation}
In order for this $v$ to be useful to (\ref{eq:Ansatz}), we need the compatibility condition that
\begin{equation*}
\norm{\ep^\sig v(\cdot/\ep)}_{L^\infty}\to0\ \text{as}\ \ep\to0,
\end{equation*}
and so it will be the case that not just any constant in (\ref{eq:CorrectorMotivation}) will work.  Finally, it turns out that it is completely unnecessary to require that there is one function $v$, such that $\ep^\sig v(\cdot/\ep)$ works as the correction to the function $\bar u$ at all $\ep$ scales.  This can be replaced by a more generic, $v^\ep$, and the compatibility condition correspondingly reads as
\begin{equation*}
\norm{v^{\ep}}_{L^\infty}\to0\ \text{as}\ \ep\to0.
\end{equation*}
Hence we have arrived at the statement of Proposition \ref{prop:Corrector}.

The main idea to solving the appropriate ``corrector'' equation relevant to (\ref{eq:PIDEmain}) was introduced in \cite[Sections 1 and 3]{CaSoWa-05} for second order equations.  One of the key observations was to view the choice of $\Bar F(\phi,x_0)$ in (\ref{eq:CorrectorEqDef}) as a sort of variational problem on the choice of a constant for the right hand side of (\ref{eq:CorrectorEqDef}).  The second key observation was to introduce a somewhat natural subadditive quantity for (\ref{eq:CorrectorEqDef}), being the measure of the contact set with the obstacle problem in the same domain and with the same operator.

Fixing $x_0=0$, the investigation looks at (\ref{eq:CorrectorEqDef}) for a generic choice of constant right hand side, $l$, given as
\begin{equation}\label{eq:lRHSBackground}
\begin{cases}
\ds F_{\phi,0}(w^{0,\ep}_l,\frac{y}{\ep},\om) = l &\text{ in } B_1(0)\\
w^{\ep}_l(y)=0 &\text{ on } \real^n\setminus B_1(0).
\end{cases}
\end{equation}
The compatibility condition which allows for the correct choice of $l$ (ultimately taken as the $\Bar F$) is the decay of $w^{0,\ep}_l$ -- is it possible to find a particular $l$ so that
\begin{equation*}
\norm{w^{0,\ep}_l}_{L^\infty}\to0\ \text{as}\ \ep\to0?
\end{equation*}
At least for $l$ negative enough, the function 
\begin{equation*}
P^+(x)=\inner{ (1-\abs{x}^2)^2}{\Indicator_{B_1}(x)}
\end{equation*}
will be a subsolution of (\ref{eq:lRHSBackground}).  By comparison, we can conclude that the lower limit of $w^{0,\ep}_l(x)$ will be larger than $P^{+}(x)$.  Thus the compatibility condition was violated in that the limit was too big.  Similarly, for $l$ large enough the function
\begin{equation*}
P^-(x)=\inner{-(1-\abs{x}^2)^2}{\Indicator_{B_1}(x)}
\end{equation*}
is a supersolution of (\ref{eq:lRHSBackground}), and this implies that the upper limit of $w^{0,\ep}_l$ is too negative.  So then there is some hope that with an appropriate choice of $l$, exactly the upper and lower limits of $w^{0,\ep}_l$ can be balanced to give 
\begin{equation*}
\norm{w^{0,\ep}_l}_{L^\infty}\to0\ \text{as}\ \ep\to0.
\end{equation*}
Indeed this is the case, and the correct choice for $l$ is given in Section \ref{sec:Corrector}.  Before one can say which is the correct choice of $l$ above, the effect of generic choice of $l$ on the possible limits of $w^{0,\ep}_l$ must be determined.  This is done using the contact set of an obstacle problem and the Subadditive Theorem in Section \ref{sec:SubadditiveAtX0}.

\section{Subadditive Limits Centered At $x_0=0$}\label{sec:SubadditiveAtX0}
\setcounter{equation}{0}

In preparation for a solution to the ``corrector'' equation of Proposition \ref{prop:Corrector}, this section is dedicated to the investigation of how the choice of $l$ and the ergodicity of $\tau$ affect the solutions, $w^{0,\ep}_l(\om)$, of the equation:

\begin{equation}\label{eq:lRHS}
\begin{cases}
\ds F_{\phi,0}(w^{0,\ep}_l,\frac{y}{\ep},\om) = l &\text{ in } Q_1(0)\\
w^{\ep}_l(y)=0 &\text{ on } \real^n\setminus Q_1(0).
\end{cases}
\end{equation}
(We have switched from $B_1$ to $Q_1$ simply for the convenience of a later analysis involving the Subadditive Theorem, for which the natural choice of sets are cubes.)
The key point is that the limiting behavior of $w^{0,\ep}_l(\om)$ from above and or below can be characterized a.s.$\om$ by applying the Subadditive Ergodic Theorem (found in multiple places, but we refer to \cite{AkKr-81LimitSuperadditive}) to an appropriate quantity related to (\ref{eq:lRHS}).  In applying the Subadditive Theorem, it is important to keep the equations centered at $x_0=0$; this restriction will be expanded upon and relaxed in Section \ref{sec:Corrector}.  

Ultimately we must answer the question given a particular $l$, will $(w^{0,\ep}_l)^*\leq0$ or will $(w^{0,\ep}_l)_*\geq0$?  For the answers to these two questions, we appeal to the fundamental observation to use an appropriate obstacle problem, introduced for homogenization in \cite[Sections 1 and 3]{CaSoWa-05}.  Specifically, we consider the solution of the obstacle problem given in (\ref{eq:ObstacleGeneric}), e.g. as the least supersolution, in the same domain, with the same operator as (\ref{eq:lRHS}), and with an obstacle of the constant 0 function (chosen as $0$ because of the questions of $(w^{0,\ep}_l)^*\leq0$ and $(w^{0,\ep}_l)_*\geq0$). 

The observation is that $w^{0,\ep}_l$ and the obstacle solution will have the same behavior asymptotically if the measure of the contact set between the obstacle solution and the obstacle goes to zero, which is fundamental to Lemma \ref{lem:ZeroContactLimit} and crucially uses Proposition \ref{prop:MminusRHSToZero}.  In particular since the obstacle solution is always above $0$, then $(w^{0,\ep}_l)_*\geq0$ as well.  Furthermore, if the obstacle solution and the obstacle keep positive contact as $\ep\to0$, then since $w^{0,\ep}_l$ is below the obstacle solution, the positive contact with the obstacle forces $(w^{0,\ep}_l)^*\leq0$.  These observations are the key points of this section and appear as Lemmas \ref{lem:ZeroContactLimit} and \ref{lem:PositiveContactLimit}. 
\subsection{The Subadditive Quantity}
Given a bounded domain, $A$, we can solve the obstacle problem with a $0$ obstacle by considering the least supersolution of the equation in (\ref{eq:lRHS}): 
\begin{equation}\label{eq:ObstacleGeneric}
U^l_A(\om)=\inf\big\{u: F_{\phi,0}(u,y,\om)\leq l \text{ in } A \text{ and } u\geq0 \text{ in } \real^n  \big\}.
\end{equation}
It will also be important to solve the same equation, but in a rescaled domain, $\ep A$, with rescaled coefficients, $F(u^\ep,x/\ep)$:
\begin{equation*}
u^{\ep,l}_A(\om)=\inf\big\{u: F_{\phi,0}(u,\frac{y}{\ep},\om)\leq l \text{ in } \ep A \text{ and } u\geq0 \text{ in } \real^n  \big\}.
\end{equation*}
Thanks to (\ref{eq:AssumptionScaling}), the relationship between the two obstacle solutions is
\begin{equation}\label{eq:ScaledAndUnscaledObstacle}
u^{\ep,l}_{A}(x,\om) = \ep^{\sigma}U^l_A(\frac{x}{\ep},\om).
\end{equation}
(Basic properties of the obstacle problem are listed without proof in the Appendix, Section \ref{sec:AppendixObstacleProb}.)  Finally, to connect with $w^{0,\ep}_l$, we make the choice of $A=Q_{1/\ep}(0)$, which gives our obstacle solutions of interest as:
\begin{align}
U^l_{Q_{1/\ep}}(\om)=\inf\big\{u: F_{\phi,0}(u,y,\om)\leq l \text{ in } Q_{1/\ep} \text{ and } u\geq0 \text{ in } \real^n  \big\}  \label{eq:ObstacleBigDomain}\\
\intertext{and}
u^{\ep,l}_{Q_{1}}(\om)=\inf\big\{u: F_{\phi,0}(u,\frac{y}{\ep},\om)\leq l \text{ in } Q_{1} \text{ and } u\geq0 \text{ in } \real^n  \big\}. \label{eq:ObstacleSmallDomain}
\end{align}

The solution of the obstacle problem gives us a very convenient random set function with which to work; namely the measure of the contact set between the solution and the obstacle,
\begin{align}
&M^l(A,\om) := \abs{\{U^l_A(x,\om)=0\}}\label{eq:MlDef}.
\end{align}
For convenience, we will denote the contact set as
\begin{equation}\label{eq:ContactSetNotation}
K^l(A,\om):= \{U^l_A(x,\om)=0\},
\end{equation}
in which case we have
\begin{align}
&M^l(A,\om) := \abs{K^l(A,\om)}.
\end{align}
The main point is that we will be able to use the Subadditive Theorem to extract limits of $M^l(Q_{1/\ep},\om)/\abs{Q_{1/\ep}}$.  Since our original question pertains to $w^{0,\ep}_l$ in $Q_1$, it will be useful to have the analogous quantities to $M^l$ and $K^l$ for $Q_1$:
\begin{align}
&k^{\ep,l}(\om)=\{u^{\ep,l}_{Q_1}(x,\om)=0\}\\
&m^{\ep,l}(\om) := \abs{k^{\ep,l}(\om)}\label{eq:mlEpDef}.
\end{align}
The scaling assumption, (\ref{eq:AssumptionScaling}), tells us the relationship between $U^l_{Q_{1/\ep}}$ and $u^{\ep,l}_{Q_1}$: 
\begin{align}
&m^{\ep,l}(\om) = \frac{1}{\abs{Q_{1/\ep}}}M^l(Q_{1/\ep},\om).\label{eq:ScaledAndUnscaledContactSets}
\end{align}

At this point it is important to recall that the operator in the definition of (\ref{eq:lRHS}) and (\ref{eq:ObstacleBigDomain}), and hence all results derived from them, depend on $\phi$ and $l$.  For right now, $\phi$ and $l$ are fixed parameters, and their use will come up again later, in Section \ref{sec:Corrector}. 

The main point of using the function $M^l(A,\om)$ is that it is stationary and subadditive, presented in the next lemma.

\begin{lem}\label{lem:MStationrySubadditive}
The set function $M^l(A,\om)$ is stationary and subadditive.  Specifically, for $z\in\real^n$ and $A=B_1\cup B_2$ with $B_1$ and $B_2$ having disjoint interiors:
\begin{align*}
&M^l(A+z,\om) = M^l(A,\tau_z\om), \text{ and }\\
&M^l(A,\om)  \leq M^l(B_1,\om)+M(B_2,\om).
\end{align*}
\end{lem}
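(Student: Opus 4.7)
The plan is to derive both properties from the effect of translations and domain inclusions on the pointwise obstacle solution $U^l_A$ itself, and then pass to the Lebesgue measure of its contact set.

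For stationarity, I would first record the conjugation identity
\[
F_{\phi,0}\bigl(u(\cdot+z),\,y,\,\tau_z\om\bigr) \;=\; F_{\phi,0}(u,\,y+z,\,\om).
\]
This is immediate from the stationarity hypotheses (\ref{eq:AssumptionStationaryK})--(\ref{eq:AssumptionStationaryf}): the change of variables $y'\mapsto y'$ in the integro-differential term uses only $K^{\al\beta}(y,\cdot,\tau_z\om)=K^{\al\beta}(y+z,\cdot,\om)$ and $f^{\al\beta}(y,\tau_z\om)=f^{\al\beta}(y+z,\om)$, while the frozen piece
\[
[L^{\al\beta}(\om)\phi(0)](y) \;=\; \int_{\real^n}\del\phi(0,y')\,K^{\al\beta}(y,y',\om)\,dy'
\]
depends on $y$ only through the kernel and so inherits the same translation law. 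Then the map $u\mapsto u(\cdot+z)$ puts the admissible classes in (\ref{eq:ObstacleGeneric}) for $U^l_{A+z}(\om)$ and $U^l_A(\tau_z\om)$ in bijection, producing the pointwise identity $U^l_{A+z}(\om,y)=U^l_A(\tau_z\om,y-z)$. Taking zero sets and invoking translation invariance of Lebesgue measure yields $M^l(A+z,\om)=M^l(A,\tau_z\om)$.

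For subadditivity, suppose $A=B_1\cup B_2$ with disjoint interiors. Since $B_i\subset A$, every competitor $u$ in the infimum defining $U^l_A(\om)$ satisfies $F_{\phi,0}(u,\cdot,\om)\leq l$ on the smaller set $B_i$ and is nonnegative everywhere, so it is admissible for $U^l_{B_i}(\om)$. Taking the infimum over a larger class can only decrease it, so $U^l_{B_i}\leq U^l_A$ pointwise. Combined with $U^l_{B_i}\geq 0$, this forces the inclusion $K^l(A,\om)\cap B_i\subset K^l(B_i,\om)$. Because $|B_1\cap B_2|=0$, I would split
\[
M^l(A,\om) \;=\; |K^l(A,\om)\cap B_1| + |K^l(A,\om)\cap B_2| \;\leq\; M^l(B_1,\om)+M^l(B_2,\om).
\]

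The argument is essentially bookkeeping; the only point requiring care is verifying that the frozen term $[L^{\al\beta}(\om)\phi(0)](y)$ transforms correctly under $y\mapsto y+z$, which it does precisely because the center $x_0=0$ of the second difference in $\phi$ stays fixed while only the coefficient variable is shifted. No regularity or uniqueness input for the obstacle solution is needed beyond monotonicity of the infimum in its admissible class.
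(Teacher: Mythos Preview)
Your proposal is correct and follows essentially the same approach as the paper. For stationarity you derive directly the translation identity $U^l_{A+z}(\om,y)=U^l_A(\tau_z\om,y-z)$ that the paper packages as Lemma~\ref{lem:ObstacleTranslation}, and for subadditivity your argument via the domain monotonicity $U^l_{B_i}\leq U^l_A$ and the resulting contact-set inclusions is identical to the paper's proof (which cites this monotonicity as Lemma~\ref{lem:ObstacleMonotonicity}).
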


\begin{proof}[Proof of Lemma \ref{lem:MStationrySubadditive}]
The stationarity is a direct consequence of the translation property of the obstacle solutions, given in Lemma \ref{lem:ObstacleTranslation}, which is simply inherited from the stationarity of the operator $F_{\phi,0}$.  The subadditivity follows from the monotonicity property of the obstacle solutions, given in Lemma \ref{lem:ObstacleMonotonicity}.  Indeed, we have that because $B_1\subset A$ and also $B_2\subset A$, then $U^l_A$ is an admissible supersolution in both of the domains $B_1$ and $B_2$, which gives
\begin{equation*}
U^l_A\geq U^l_{B_1}\ \text{and also}\ U^l_A\geq U^l_{B_2}.
\end{equation*}
Thus 
\begin{equation*}
K^l(A,\om)\intersect B_1\subset K^l(B_1,\om)\ \text{and}\ K^l(A,\om)\intersect B_2\subset K^l(B_2,\om).
\end{equation*}
Hence
\begin{equation*}
\abs{K^l(A,\om)}=\abs{K^l(A,\om)\intersect B_1} + \abs{K^l(A,\om)\intersect B_2}\leq \abs{K^l(B_1,\om)} + \abs{K^l(B_2,\om)}.
\end{equation*}
\end{proof}

The stationarity and subadditivity of $M^l$ allow to use the Subadditive Theorem (see \cite{AkKr-81LimitSuperadditive}) to extract a limit.  This is the content of the next lemma.

\begin{lem}\label{lem:SubAdditiveLimits}
The exists a set of full measure, $\Om_{\phi,l}$, (depending on $\phi$ and $l$) such that the following limits hold for $\om\in\Om_{\phi,l}$: 
\begin{align}
\lim_{\ep\to0}m^{\ep,l}(\om)=\bar m^l(\phi).
\end{align}
\end{lem}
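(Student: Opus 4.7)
The strategy is a direct application of the Akcoglu--Krengel subadditive ergodic theorem in the form given in \cite{AkKr-81LimitSuperadditive} to the set function $M^l(\cdot,\om)$, combined with the ergodicity assumption (\ref{eq:AssumptionErgodic}) to upgrade the a.s.\ limit to a deterministic constant. The rescaling (\ref{eq:ScaledAndUnscaledContactSets}) then translates the result from $M^l(Q_{1/\ep},\om)/|Q_{1/\ep}|$ to the quantity $m^{\ep,l}(\om)$ of interest.

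First I would check that $M^l$ satisfies all hypotheses of the subadditive ergodic theorem on cubes. Lemma \ref{lem:MStationrySubadditive} provides the stationarity $M^l(A+z,\om)=M^l(A,\tau_z\om)$ and the finite subadditivity over disjoint decompositions, which extends inductively to finite partitions of cubes into congruent subcubes. The required integrability/boundedness is immediate from the trivial estimate
\begin{equation*}
0\leq M^l(Q_r,\om)\leq \abs{Q_r},
\end{equation*}
since the contact set $K^l(Q_r,\om)$ is contained in $Q_r$. In particular $\E[M^l(Q_r,\cdot)]/|Q_r|$ is bounded in $[0,1]$. These are precisely the ingredients needed for the continuous-parameter version of \cite{AkKr-81LimitSuperadditive}.

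Second, applying that theorem yields a set $\Om_{\phi,l}^{(0)}$ of full measure and an $\F$-measurable function $\bar M(\om)$ such that
\begin{equation*}
\lim_{r\to\infty}\frac{M^l(Q_r,\om)}{\abs{Q_r}}=\bar M(\om)\quad\text{for all }\om\in\Om_{\phi,l}^{(0)}.
\end{equation*}
I would then promote $\bar M$ to a deterministic constant using ergodicity. For any $z\in\real^n$, stationarity gives $M^l(Q_r,\tau_z\om)=M^l(Q_r+z,\om)$, and the symmetric difference $Q_r\triangle(Q_r+z)$ has volume $O(r^{n-1}|z|)$, which is negligible compared to $|Q_r|=r^n$. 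Combined with the obvious comparison $|M^l(Q_r+z,\om)-M^l(Q_r,\om)|\leq |Q_r\triangle(Q_r+z)|$, this forces $\bar M(\tau_z\om)=\bar M(\om)$ almost surely for every $z$. Thus $\bar M$ is invariant under the group $\{\tau_z\}$, and (\ref{eq:AssumptionErgodic}) implies that $\bar M$ is a.s.\ equal to a constant, which we label $\bar m^l(\phi)$ (it depends on $\phi$ and $l$ because $F_{\phi,0}$ does).

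Finally, I would take $r=1/\ep$ and use the scaling identity (\ref{eq:ScaledAndUnscaledContactSets}) to rewrite
\begin{equation*}
m^{\ep,l}(\om)=\frac{M^l(Q_{1/\ep},\om)}{\abs{Q_{1/\ep}}},
\end{equation*}
so that on the set $\Om_{\phi,l}$ where both the a.s.\ convergence and the invariance-to-constant hold, one has $m^{\ep,l}(\om)\to \bar m^l(\phi)$ as $\ep\to 0$. There is no serious obstacle: essentially every ingredient (stationarity, subadditivity, boundedness) has already been assembled. The only point requiring a brief argument is the passage from an a.s.\ invariant limit function to a deterministic constant, and this follows cleanly from ergodicity together with the cube-shift comparison above.
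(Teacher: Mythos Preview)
Your approach is essentially the paper's: apply the Akcoglu--Krengel subadditive ergodic theorem to $M^l$, use ergodicity of $\tau$ to get a deterministic limit, and rescale via (\ref{eq:ScaledAndUnscaledContactSets}). Your explicit check of the hypotheses (stationarity and subadditivity from Lemma \ref{lem:MStationrySubadditive}, together with the trivial bound $0\le M^l(Q_r,\om)\le|Q_r|$) is correct and in fact more detailed than what the paper writes.

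There is one slip in your invariance argument. The ``obvious comparison''
\[
|M^l(Q_r+z,\om)-M^l(Q_r,\om)|\le |Q_r\triangle(Q_r+z)|
\]
is \emph{not} obvious: $M^l(A,\om)$ is the measure of the contact set of a \emph{domain-dependent} obstacle problem, not the measure of a fixed set intersected with $A$, so shifting the domain could in principle alter $U^l_A$ throughout the interior. Two clean fixes are available. First, the Akcoglu--Krengel theorem already asserts that the limit is measurable with respect to the $\tau$-invariant $\sigma$-algebra, so under (\ref{eq:AssumptionErgodic}) it is automatically a constant; this is exactly what the paper invokes. Second, if you want to argue invariance directly, use subadditivity instead of your pointwise comparison: since $Q_{r-2|z|_\infty}\subset Q_r+z\subset Q_{r+2|z|_\infty}$, subadditivity and the trivial volume bound give
\[
M^l(Q_{r+2|z|_\infty},\om)-O(r^{n-1}|z|)\ \le\ M^l(Q_r+z,\om)\ \le\ M^l(Q_{r-2|z|_\infty},\om)+O(r^{n-1}|z|),
\]
and dividing by $|Q_r|$ and sending $r\to\infty$ yields $\bar M(\tau_z\om)=\bar M(\om)$ a.s. Either route closes the gap; the rest of your argument is fine.
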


\begin{rem}
It is very important that the limiting quantity depends upon the function, $\phi$, for its use in Section \ref{sec:Corrector}.  However for this current section, $\phi$ is fixed and so we drop the explicit dependence of $\bar m^l$ on $\phi$.
\end{rem}

\begin{proof}[Proof of Lemma \ref{lem:SubAdditiveLimits}]
The Subadditive Theorem (see \cite{AkKr-81LimitSuperadditive}) directly applies to $M^l(Q_{1/\ep},\om)$.  Moreover, the translation group appropriate for the stationarity of $M^l$ is exactly the group, $\tau_x$, from the stationarity of the original equations (\ref{eq:PIDEmain}), which is ergodic (this is not always the case, cf. \cite[Section 4]{Schw-09} where the transformation corresponding to the stationarity of the subadditive quantity was not the original $\tau_x$).  Therefore, there is a \emph{constant}, $\bar m^l$, and a set of full measure, $\Om_{\phi,l}$, such that for $\om\in\Om_{\phi,l}$
\begin{equation*}
\frac{1}{\abs{Q_{1/\ep}}}M^l(Q_{1/\ep},\om)\to \bar m^l.
\end{equation*}
Thus the conclusion of the lemma follows from the relationship between $m^{\ep,l}$ and $M^l$, given in (\ref{eq:ScaledAndUnscaledContactSets}).
\end{proof}

\begin{lem}\label{lem:lBarIncreasing}
$\bar m^l$ is increasing in $l$.
\end{lem}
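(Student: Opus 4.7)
The plan is to reduce the claim to a pointwise monotonicity of the obstacle solutions $U^l_A$ in the parameter $l$, and then pass to the limit along a common full-measure set. So fix $l_1\leq l_2$ and a bounded domain $A$ (eventually $A=Q_{1/\ep}$). Because any $u$ which is admissible in the definition \eqref{eq:ObstacleGeneric} of $U^{l_1}_A$ automatically satisfies $F_{\phi,0}(u,\cdot,\om)\leq l_1\leq l_2$ and $u\geq 0$, it is also admissible for $U^{l_2}_A$. Since the infimum in \eqref{eq:ObstacleGeneric} is taken over a larger class for $l_2$, we obtain the pointwise comparison
\begin{equation*}
0\leq U^{l_2}_A(x,\om)\leq U^{l_1}_A(x,\om)\quad\text{for every }x\in\real^n,\ \om\in\Om.
\end{equation*}

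From this pointwise ordering the corresponding contact sets nest: if $U^{l_1}_A(x,\om)=0$, then $0\leq U^{l_2}_A(x,\om)\leq 0$, so $x\in K^{l_2}(A,\om)$ as well. Thus $K^{l_1}(A,\om)\subset K^{l_2}(A,\om)$, and taking Lebesgue measure gives $M^{l_1}(A,\om)\leq M^{l_2}(A,\om)$. Applying this with $A=Q_{1/\ep}$ and using \eqref{eq:ScaledAndUnscaledContactSets} yields
\begin{equation*}
m^{\ep,l_1}(\om)\leq m^{\ep,l_2}(\om)\quad\text{for all }\ep>0,\ \om\in\Om.
\end{equation*}

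Finally, for $\om$ in the full-measure set $\Om_{\phi,l_1}\cap\Om_{\phi,l_2}$ supplied by Lemma \ref{lem:SubAdditiveLimits}, both sides have limits as $\ep\to 0$; passing to the limit in the inequality gives $\bar m^{l_1}\leq \bar m^{l_2}$, which is the desired monotonicity. There is no genuine obstacle here — the only thing to check carefully is the correct direction of the comparison (a larger right-hand side enlarges the admissible supersolution class and therefore \emph{lowers} the obstacle solution, which in turn \emph{enlarges} the contact set), so that inequalities do not get inverted.
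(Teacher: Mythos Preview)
Your proof is correct and follows essentially the same route as the paper: the paper invokes Lemma~\ref{lem:ObstacleMonotonicityInRHS} to get $U^{l_1}_{Q_{1/\ep}}\geq U^{l_2}_{Q_{1/\ep}}$, deduces the nesting of contact sets, and then passes to the limit, which is exactly what you do (you simply prove the monotonicity-in-RHS lemma inline rather than cite it). Your explicit mention of intersecting $\Om_{\phi,l_1}\cap\Om_{\phi,l_2}$ before taking limits is a nice touch that the paper leaves implicit.
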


\begin{proof}[Proof of Lemma \ref{lem:lBarIncreasing}]
Let $l_1\leq l_2$.  By Lemma \ref{lem:ObstacleMonotonicityInRHS}, we know that $U^{l_1}_{Q_{1/\ep}}\geq U^{l_2}_{Q_{1/\ep}}$.  Hence $K^{l_1}(Q_{1/\ep},\om)\subset K^{l_2}(Q_{1/\ep},\om)$.  Taking limits as $\ep\to0$ gives the result.
\end{proof}

\subsection{How The Subadditive Limit Controls The Solution To (\ref{eq:lRHS})}

Now that we know there is a subadditive limit we can extract from $u^{\ep,l}_{Q_1}$ (given as $\bar m^l$), it must still be related back to the behavior of the solutions of (\ref{eq:lRHS}).  The behavior of $w^{0,\ep}_l$ is characterized in Lemmas \ref{lem:ZeroContactLimit} and \ref{lem:PositiveContactLimit}.  At this point, we suppress the explicit dependence upon $\om$ as much as possible.

\begin{lem}\label{lem:ZeroContactLimit}
If $\bar m^l=0$, then $(w^{0,\ep}_l)_*\geq0$.
\end{lem}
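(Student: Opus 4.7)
The plan is to show that the obstacle solution $u_\ep := u^{\ep,l}_{Q_1}(\om)$ and the Dirichlet solution $w_\ep := w^{0,\ep}_l(\om)$ become asymptotically indistinguishable in $L^\infty$ once the contact set shrinks to measure zero, and then to exploit $u_\ep \geq 0$ to transfer this information into the desired lower bound on $w_\ep$. By the standard comparison principle and the definition of the obstacle problem (Section \ref{sec:AppendixObstacleProb}), one has $u_\ep \geq w_\ep$ and $u_\ep \geq 0$ on $\real^n$, with both functions vanishing outside $Q_1$. Setting $V_\ep := u_\ep - w_\ep \geq 0$, one has
\[
w_\ep = u_\ep - V_\ep \geq -V_\ep \geq -\|V_\ep\|_{L^\infty(\real^n)},
\]
so $(w^{0,\ep}_l)_* \geq 0$ will follow once we prove $\|V_\ep\|_{L^\infty} \to 0$ along the full-measure event $\Om_{\phi,l}$ from Lemma \ref{lem:SubAdditiveLimits}.

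The heart of the argument is to produce a viscosity inequality of the form
\[
M^+(V_\ep, x) \geq -g_\ep(x) \quad \text{in } Q_1,
\]
with $g_\ep$ bounded uniformly in $\ep$ and $\{g_\ep > 0\} \subset k^{\ep,l}(\om)$. On the open set $\{u_\ep > 0\}$ the obstacle is inactive, so $u_\ep$ solves $F_{\phi,0}(u_\ep, x/\ep, \om) = l$ in the two-sided viscosity sense, exactly the equation satisfied by $w_\ep$; ellipticity of $F_{\phi,0}$ then yields $M^+(V_\ep) \geq F_{\phi,0}(u_\ep) - F_{\phi,0}(w_\ep) = 0$ on this set, and $g_\ep$ may be taken to vanish there. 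On the contact set $k^{\ep,l}(\om)$, where $u_\ep \equiv 0$ pointwise, one combines the supersolution inequality $F_{\phi,0}(u_\ep) \leq l$ with the a priori bound $|F_{\phi,0}(0, x, \om)| \leq C_\phi$---which uses (\ref{eq:AssumptionfBounded}) together with $\phi \in C^{1,1}(\real^n) \cap L^\infty$ to control the frozen term $[L^{\al\beta}\phi(0)](x)$ uniformly in $\al, \beta, x$---and with the nonnegativity of $u_\ep$ (which makes the nonlocal contribution $\int \del u_\ep(x,y) K^{\al\beta}(x,y,\om)\,dy \geq 0$ at contact points) to produce a uniform constant $C$ so that $M^+(V_\ep) \geq -C$ on $k^{\ep,l}(\om)$. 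One may then take $g_\ep$ to be a continuous majorant of $C\,\Indicator_{k^{\ep,l}(\om)}$.

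With this inequality in hand, Proposition \ref{prop:MminusRHSToZero} (or Conjecture \ref{con:ComparisonMeasurable} in the more general ellipticity regime used in Theorem \ref{thm:general}) is ready to apply to $V_\ep$: it vanishes off $Q_1$ (so trivially $V_\ep \leq 0$ on $\real^n \setminus Q_1$), $\|g_\ep\|_\infty \leq C$, and by the subadditive limit combined with the hypothesis $\bar m^l = 0$,
\[
|\{g_\ep > 0\}| \leq m^{\ep,l}(\om) \longrightarrow \bar m^l = 0 \quad \text{on } \Om_{\phi,l}.
\]
The resulting conclusion $\|V_\ep\|_{L^\infty} \to 0$ combined with the opening display gives $(w^{0,\ep}_l)_* \geq 0$.

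The main obstacle is the middle step: rigorously extracting the viscosity subsolution inequality for the difference $V_\ep = u_\ep - w_\ep$ at points of $k^{\ep,l}(\om)$, where $u_\ep$ satisfies only a one-sided supersolution inequality. Off the contact set, the difference of two viscosity solutions inherits standard extremal bounds, but on the contact set the naive subtraction argument fails; the delicate point is to pair the inf-sup structure of $F_{\phi,0}$ with the a priori control of $F_{\phi,0}(0, \cdot, \om)$ and the sign $u_\ep \geq 0$ to convert ``$u_\ep$ sits on the obstacle" into a uniformly bounded measurable source $g_\ep$ supported in $k^{\ep,l}(\om)$.
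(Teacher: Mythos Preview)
Your proposal is correct and follows essentially the same route as the paper: form $V_\ep = u^{\ep,l}_{Q_1} - w^{0,\ep}_l \geq 0$, derive $M^+(V_\ep,\cdot) \geq -C\,\Indicator_{k^{\ep,l}}$ in $Q_1$, approximate the indicator by a continuous $g_\ep$, and apply Proposition~\ref{prop:MminusRHSToZero} using $m^{\ep,l}\to\bar m^l=0$. The ``main obstacle'' you flag at the end is not actually an obstacle: your own observation that $\del u_\ep(x,y)\geq 0$ at contact points (since $u_\ep\geq 0=u_\ep(x)$ there) is exactly what the paper records as (\ref{eq:ObstacleEquationContactRHS}), namely $F_{\phi,0}(u_\ep,x)\geq F_{\phi,0}(0,x)$ on $k^{\ep,l}$, and together with $F_{\phi,0}(w_\ep,x)=l$ this gives the source term directly without any delicate subtraction on the contact set.
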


\begin{proof}[Proof of Lemma \ref{lem:ZeroContactLimit}]
In this scenario, we will show that the obstacle solution and free solution coincide in the limit.  Therefore, since $u^{\ep,l}\geq0$, we conclude that $(w^{0,\ep}_l)_*\geq0$ as well.

Because $u^{\ep,l}$ is a supersolution of the equation for $w^{0,\ep}_l$, (\ref{eq:lRHS}), we have immediately that $u^{\ep,l}-w^{0,\ep}_l\geq0$.  Therefore, we focus on the reverse inequality.
By definition of elliptic equations, we know that in the viscosity sense
\begin{equation*}
M^+(u^{\ep,l}-w^{0,\ep}_l,x)\geq F_{\phi,x_0}(u^{\ep,l},x)-F_{\phi,x_0}(w^{0,\ep}_l,x). 
\end{equation*}
Owing to (\ref{eq:ObstacleEquationContactRHS}), we have
\begin{equation*}
F_{\phi,x_0}(u^{\ep,l},x)-F_{\phi,x_0}(w^{0,\ep}_l,x)\geq (F_{\phi,x_0}(0,x)-l)\Indicator_{k^\ep_l}(x),
\end{equation*}
and since $F_{\phi,x_0}(0,x)$ is bounded from below (depending on $\phi$), we get the equation for $u^{\ep,l}-w^{0,\ep}_l$:
\begin{equation*}
\begin{cases}
\ds M^+(u^{\ep,l}-w^{0,\ep}_l,x)\geq -C\Indicator_{k^\ep_l}(x) &\text{in}\ Q_1\\
u^{\ep,l}-w^{0,\ep}_l=0 &\text{on}\ \real^n\setminus Q_1.
\end{cases}
\end{equation*}
To apply Proposition \ref{prop:MminusRHSToZero}, we let $g_\ep(x)$ be a continuous approximation of $\Indicator_{k^\ep_l}(x)$ from above.  Thus as $\ep\to0$, $g_\ep$ can be chosen so that $\{g_\ep>0\}\to0$ because we are assuming $k^\ep\to0$.  Therefore, by Proposition \ref{prop:MminusRHSToZero}, $(u^{\ep,l}-w^{0,\ep}_l)^*\leq 0$.  This can be rewritten as
\begin{equation*}
(w^{0,\ep}_l-0)_*\geq (w^{0,\ep}_l-u^{\ep,l})_* \geq 0,
\end{equation*}
which concludes the lemma.
\end{proof}

\noindent
Now we will see which conditions on $\bar m$ imply $(w^{0,\ep}_l)^*\leq0$.
\begin{lem}\label{lem:PositiveContactLimit}
If $\bar m^l>0$, then $(w^{0,\ep}_l)^*\leq0$.
\end{lem}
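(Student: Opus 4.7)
The strategy is to dominate $w^{0,\ep}_l$ by the obstacle solution $u^{\ep,l}_{Q_1}(\om)$ and then show that $u^{\ep,l}_{Q_1}$ itself collapses to zero as $\ep\to 0$. The comparison step is identical to the one in the proof of Lemma \ref{lem:ZeroContactLimit}: ellipticity, together with the fact that $u^{\ep,l}_{Q_1}$ is a supersolution of $F_{\phi,0}(\cdot,y/\ep,\om)\leq l$ in $Q_1$, yields
\begin{equation*}
M^-(u^{\ep,l}_{Q_1}-w^{0,\ep}_l,x)\leq F_{\phi,0}(u^{\ep,l}_{Q_1},\tfrac{x}{\ep},\om)-F_{\phi,0}(w^{0,\ep}_l,\tfrac{x}{\ep},\om)\leq 0
\end{equation*}
in $Q_1$, together with $u^{\ep,l}_{Q_1}-w^{0,\ep}_l\geq 0$ on $\real^n\setminus Q_1$; the maximum principle then gives $w^{0,\ep}_l\leq u^{\ep,l}_{Q_1}$ on $\real^n$. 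Since $u^{\ep,l}_{Q_1}\geq 0$, the lemma will follow once we prove that $u^{\ep,l}_{Q_1}\to 0$ uniformly on compact subsets of $Q_1$.

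The next step is to upgrade Lemma \ref{lem:SubAdditiveLimits} from $Q_1$ to every dyadic subcube. Partition $Q_1$ into finitely many closed dyadic subcubes $\{Q^{(i)}\}$ with disjoint interiors, and let $u^{\ep,l}_{Q^{(i)}}$ denote the obstacle solution built from the same operator but posed on the subdomain $Q^{(i)}$. The same subadditive-ergodic argument as in Lemma \ref{lem:SubAdditiveLimits} gives $|k^{\ep,l}_{Q^{(i)}}|\to\bar m^l|Q^{(i)}|$ almost surely, for each $i$. Since the restriction of $u^{\ep,l}_{Q_1}$ to $Q^{(i)}$ is an admissible function for the obstacle problem on $Q^{(i)}$, the obstacle monotonicity in the domain (Lemma \ref{lem:ObstacleMonotonicity}) gives $u^{\ep,l}_{Q_1}\geq u^{\ep,l}_{Q^{(i)}}$ and therefore $|k^{\ep,l}_{Q_1}\cap Q^{(i)}|\leq|k^{\ep,l}_{Q^{(i)}}|$. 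Combining these with the global convergence $|k^{\ep,l}_{Q_1}|\to\bar m^l$ of Lemma \ref{lem:SubAdditiveLimits},
\begin{equation*}
\bar m^l=\lim_{\ep\to0}\sum_i|k^{\ep,l}_{Q_1}\cap Q^{(i)}|\leq\lim_{\ep\to0}\sum_i|k^{\ep,l}_{Q^{(i)}}|=\sum_i\bar m^l|Q^{(i)}|=\bar m^l,
\end{equation*}
so every inequality saturates and a squeeze of nonnegative summands forces $|k^{\ep,l}_{Q_1}\cap Q^{(i)}|\to\bar m^l|Q^{(i)}|$ for each $i$. Intersecting the resulting full-measure events across the countable family of dyadic partitions yields one $\om$-set of full measure on which the contact set of $u^{\ep,l}_{Q_1}$ has density $\bar m^l>0$ in every dyadic subcube of $Q_1$; in particular, for every $x\in Q_1$ and every $\delta>0$, $k^{\ep,l}_{Q_1}\cap B_\delta(x)\neq\emptyset$ for all sufficiently small $\ep$.

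The final step couples this density of zeros with uniform interior regularity. Standard interior H\"older estimates for the obstacle problem associated with uniformly elliptic nonlocal operators of order $\sigma$ (coming from the Caffarelli-Silvestre theory \cite{CaSi-09RegularityIntegroDiff} and its analogue for the kernel class (\ref{eq:AssumptionKernelsGS})) show that $\{u^{\ep,l}_{Q_1}\}_{\ep>0}$ is equicontinuous on each compact $K\subset Q_1$, uniformly in $\ep$. Since $u^{\ep,l}_{Q_1}\geq 0$ and its zeros are dense in $Q_1$ as $\ep\to 0$, equicontinuity forces $\|u^{\ep,l}_{Q_1}\|_{L^\infty(K)}\to 0$ for every compact $K\subset Q_1$. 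Combined with $w^{0,\ep}_l\leq u^{\ep,l}_{Q_1}$ and with $w^{0,\ep}_l=0$ outside $Q_1$, this yields $(w^{0,\ep}_l)^*\leq 0$, which is the claim.

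I expect the main technical obstacle to be the subcube-localization step, that is, distributing the single global limit $\bar m^l$ equally across all subcubes. It relies on the subadditive ergodic theorem applied separately to the obstacle problem on each shrunken subdomain, the admissibility monotonicity of the obstacle solution in the domain, and the elementary squeeze of the nonnegative summands; once this is in place, the conclusion reduces to the standard combination of uniform equicontinuity and density of zeros.
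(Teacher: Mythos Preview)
Your overall plan---compare $w^{0,\ep}_l$ with $u^{\ep,l}_{Q_1}$, show the latter touches zero in every subcube, then use uniform H\"older regularity---is exactly the paper's strategy. The gap is in the step you yourself flag as the main obstacle: you claim that ``the same subadditive-ergodic argument as in Lemma~\ref{lem:SubAdditiveLimits} gives $|k^{\ep,l}_{Q^{(i)}}|\to\bar m^l|Q^{(i)}|$'' for each fixed dyadic subcube $Q^{(i)}=Q_r(y_i)$. Unscaling, this is the assertion that
\[
\frac{1}{|Q_{r/\ep}|}\,M^l\bigl(Q_{r/\ep}(y_i/\ep),\om\bigr)
=\frac{1}{|Q_{r/\ep}|}\,M^l\bigl(Q_{r/\ep}(0),\tau_{y_i/\ep}\om\bigr)\longrightarrow\bar m^l,
\]
but the Subadditive Theorem delivers this limit only for cubes centered at a \emph{fixed} point, not for cubes whose center $y_i/\ep$ escapes to infinity at the same rate as the side length. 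The random parameter on the right is $\tau_{y_i/\ep}\om$, which varies with $\ep$, and there is no a~priori reason it remains inside the full-measure set on which the a.s.\ convergence holds. Your squeeze argument then collapses, because it uses precisely this per-subcube limit as the upper bound.

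The paper resolves exactly this issue in Lemma~\ref{lem:SubCubePositiveContact}: it upgrades the a.s.\ convergence of $m^{\ep,l}$ to \emph{uniform} convergence on a set $G_\eta$ of probability $>1-\eta$ via Egorov (Lemma~\ref{lem:mlGoodSetUniform}), and then uses the Kosygina--Rezakhanlou--Varadhan translation lemma (Lemma~\ref{lem:VaradhanGoodSetTranslation}) to perturb each center $y_j$ to a nearby $\hat y^\ep_j$ for which $\tau_{\hat y^\ep_j/\ep}\om\in G_\eta$. On $G_\eta$ the limit holds uniformly, so the translated estimate is legitimate, and with these perturbed centers the squeeze goes through essentially as you wrote it (see the chain around~(\ref{eq:SubCubesSummation})). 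So your architecture is right, but the missing ingredient---controlling the $\ep$-dependent translation $\tau_{y_i/\ep}\om$---is exactly what the paper supplies and what your sketch omits.
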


\begin{proof}[Proof of Lemma \ref{lem:PositiveContactLimit}]
This is a direct consequence of the uniform H\"older regularity of $u^{\ep,l}$ combined with Lemma \ref{lem:SubCubePositiveContact}.  Indeed, we know that $u^{\ep,l}\geq w^{0,\ep}_l$ and given any $r>0$ and $x\in Q_1$ Lemma \ref{lem:SubCubePositiveContact} implies at least one point $\hat x$ with $\abs{x-\hat x}\leq r$ and $u^{\ep,l}(\hat x)=0$.  Therefore
\begin{equation*}
(w^{0,\ep}_l)^*\leq (u^{\ep,l})^*\leq Cr^\gam
\end{equation*}
for a uniform $C$ and $\gam$ corresponding to the regularity of $u^{\ep,l}$ (given by Lemma \ref{lem:ObstacleUniformModulus}).  Since $r$ was arbitrary, we conclude.
\end{proof}

\noindent
The key point used in Lemma \ref{lem:PositiveContactLimit} is the idea that if asymptotically the measure of the contact set is positive, then that positive measure should be spread around $Q_1$ evenly (hence at least one contact point in any subcube of $Q_1$).  This is indeed the case, which is made precise in the next lemma.

\begin{lem}[Positive Contact in Sub-cubes]\label{lem:SubCubePositiveContact}
Assume that $\bar m^l=\al>0$.  For any $r>0$ and any $\eta>0$ fixed, there exists a family of centers, $\{\hat y^\ep_j\}$ and their corresponding cubes, $\{Q_{r+2\rho(\ep)}(\hat y^\ep_j)\}$, such that 
\begin{itemize}
\item[i)] $Q_1(0)\subset \union Q_{r+2\rho(\eta)}(\hat y^\ep_j)$
\item[ii)]  $\abs{\{u^{\ep,l}_{Q_1}=0\}\intersect Q_{r+2\rho(\eta)}(\hat y^\ep_j)}>0$
\item[iii)] $\rho(\eta)\to0$ as $\eta\to0$.
\end{itemize}
\end{lem}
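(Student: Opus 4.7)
The plan is to extend Lemma \ref{lem:SubAdditiveLimits} from the single obstacle problem on $Q_1$ to a finite family of shifted sub-problems that tile $Q_1$, and then use monotonicity of obstacle solutions together with an averaging argument to transfer positive contact density into each sub-cube of the main solution $u^{\ep,l}_{Q_1}$. I tile $Q_1$ by a finite grid $\{Q_r(y_j)\}_{j=1}^N$ of cubes of side $r$ (with $N=r^{-n}$), and take $\hat y^\ep_j:=y_j$, so that the enlarged cubes $\{Q_{r+2\rho(\eta)}(\hat y^\ep_j)\}$ automatically cover $Q_1$ for any $\rho(\eta)\geq 0$; this secures item (i), and item (iii) is secured by choosing, say, $\rho(\eta)=\eta$. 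Item (ii) is the heart of the matter.

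\textbf{Subadditive limit for each sub-cube.} For each fixed $y_j$, the rescaled cubes $\{Q_{r/\ep}(y_j/\ep)\}$ form a Van Hove sequence as $\ep\to 0$ (the boundary-to-volume ratio is $O(\ep/r)$, uniformly in the translation). Applying the Akcoglu-Krengel Subadditive Ergodic Theorem \cite{AkKr-81LimitSuperadditive} to the stationary, subadditive set function $M^l$ along this sequence, exactly as in the proof of Lemma \ref{lem:SubAdditiveLimits}, produces a full-measure set $\Om_{y_j}$ on which
\[
\lim_{\ep\to 0}\frac{\abs{K^l(Q_{r/\ep}(y_j/\ep),\om)}}{\abs{Q_{r/\ep}}} = \bar m^l = \alpha.
\]
Intersecting the finitely many such sets gives a full-measure $\Om_* := \bigcap_{j=1}^N \Om_{y_j}$ on which the above limit holds for every $j$ simultaneously.

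\textbf{Averaging argument.} Fix $\om\in\Om_*$. By monotonicity of obstacle solutions, since $Q_r(y_j)\subset Q_1$ implies $U^l_{Q_{1/\ep}}\geq U^l_{Q_{r/\ep}(y_j/\ep)}$, rescaling yields
\[
\beta_j(\ep) := \frac{\abs{k^{\ep,l}\cap Q_r(y_j)}}{\abs{Q_r(y_j)}} \;\leq\; \frac{\abs{K^l(Q_{r/\ep}(y_j/\ep),\om)}}{\abs{Q_{r/\ep}}} = \alpha + o_\ep(1).
\]
Since the $Q_r(y_j)$'s tile $Q_1$, Lemma \ref{lem:SubAdditiveLimits} applied to the main problem gives
\[
\sum_j \beta_j(\ep)\,\abs{Q_r(y_j)} = \abs{k^{\ep,l}} = \alpha + o_\ep(1) = \alpha\sum_j\abs{Q_r(y_j)} + o_\ep(1).
\]
The per-cube upper bound makes each summand of $\sum_j(\alpha+o_\ep(1)-\beta_j(\ep))\abs{Q_r(y_j)}$ non-negative, while the total sum is $o_\ep(1)$. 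Since $N$ is finite and $\abs{Q_r(y_j)}=r^n$ is fixed, this forces $\beta_j(\ep)\geq \alpha - o_\ep(1)/r^n$ for every $j$. Consequently, for $\ep$ small enough, $\beta_j(\ep)\geq \alpha/2$, so $\abs{k^{\ep,l}\cap Q_r(y_j)}\geq(\alpha/2)r^n > 0$ for all $j$, establishing item (ii).

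\textbf{Main obstacle.} The delicate point is the applicability of the Subadditive Ergodic Theorem in the intermediate step to Van Hove sequences $\{Q_{r/\ep}(y_j/\ep)\}$ whose centers drift to infinity as $\ep\to 0$. This is handled by the multiparameter version of the Akcoglu-Krengel theorem, which provides a single full-measure set (depending only on the stationary process $M^l$, not on the sequence) on which convergence holds along \emph{every} Van Hove sequence of sets with volumes tending to infinity; the Van Hove property for the $Q_{r/\ep}(y_j/\ep)$ is immediate since the boundary-to-volume ratio is $O(\ep/r)$, uniform in the translation.
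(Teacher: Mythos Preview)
Your averaging argument---upper bound $\beta_j(\ep)\leq\al+o_\ep(1)$ from monotonicity together with the total $\sum_j\beta_j(\ep)\,r^n=\al+o_\ep(1)$ forcing each $\beta_j(\ep)$ to be bounded below---is exactly the mechanism the paper uses (see the chain of inequalities around (\ref{eq:SubCubesSummation})). The substantive difference, and the gap in your argument, lies in how the per-subcube limit
\[
\frac{\abs{K^l(Q_{r/\ep}(y_j/\ep),\om)}}{\abs{Q_{r/\ep}}}\longrightarrow\al
\]
is justified.

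Your justification is that the Akcoglu--Krengel theorem gives almost-sure convergence along \emph{every} Van Hove sequence on a single full-measure set. That claim is not correct: the Van Hove (F{\o}lner) condition yields $L^1$ convergence, but pointwise a.s.\ convergence requires a regularity condition on the averaging sets, typically that they contain a cube centered at the origin of comparable size, or at least that they remain within a bounded multiple of their own diameter from the origin. Your cubes $Q_{r/\ep}(y_j/\ep)$ have centers at distance $\abs{y_j}/\ep$ and side $r/\ep$; stationarity rewrites the ratio as $\abs{Q_{r/\ep}}^{-1}M^l(Q_{r/\ep}(0),\tau_{y_j/\ep}\om)$, and the difficulty is precisely that $\tau_{y_j/\ep}\om$ wanders in $\Om$ as $\ep\to0$. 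The paper flags this explicitly in the paragraph preceding Lemma~\ref{lem:mlGoodSetUniform}: ``this is a priori not compatible with the Subadditive Theorem and requires more careful attention.''

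The paper's fix is to use Egorov's theorem (Lemma~\ref{lem:mlGoodSetUniform}) to find a set $G_\eta$ of measure $>1-\eta$ on which the convergence of $m^{\ep,l}$ is \emph{uniform}, and then apply the additive ergodic theorem to the indicator of $G_\eta$ (Lemma~\ref{lem:VaradhanGoodSetTranslation}) to produce perturbed centers $\hat y^\ep_j$ with $\abs{y_j-\hat y^\ep_j}\leq\rho(\eta)$ and $\tau_{\hat y^\ep_j/\ep}\om\in G_\eta$. The uniformity on $G_\eta$ is what makes the per-subcube limit hold simultaneously for all $j$. This is why the statement of the lemma carries the shifted centers $\hat y^\ep_j$ and the function $\rho(\eta)$: they are not placeholders to be set to $y_j$ and $\eta$, but the output of the Egorov/translation machinery. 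Your choice $\hat y^\ep_j=y_j$, $\rho(\eta)=\eta$ discards exactly the structure that makes the argument go through.

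Your approach is not unsalvageable: since the ratio $\abs{y_j}/r$ is bounded (by roughly $1/r$), the drifting cubes are ``regular'' in the sense of Tempelman/Krengel, and one can invoke a version of the subadditive theorem for such regular sequences rather than arbitrary Van Hove sequences. But that requires a careful citation and yields a full-measure set depending on the grid $\{y_j\}$, hence on $r$, which must then be intersected over countably many $r$. The paper's Egorov route avoids both the delicate ergodic-theoretic citation and the $r$-dependence of the exceptional set.
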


\noindent
The main idea behind Lemma \ref{lem:SubCubePositiveContact} is that the knowledge of the limit of $m^{\ep,l}(\om)$ can be rescaled and translated using $\tau_x$ to any other subcube in $Q_1$.  The problem is that this heuristic is correct only under very careful translations using $\tau$.  Indeed, we note that moving $m^{\ep,l}$ to another cube centered at e.g. $y$ corresponds to looking at $M^l(Q_{r/\ep}(y/\ep),\om)$, and hence by the stationarity $M^l(Q_{r/\ep}(0),\tau_{y/\ep}\om)$.  But the problem is that the translation of this cube back to $Q_{1/\ep}(0)$ introduces the factor $\tau_{y/\ep}\om$ on the random parameter.  This is a priori not compatible with the Subadditive Theorem and requires more careful attention.  Nonetheless, the desired outcome can be reached, and it is the culmination of Lemmas \ref{lem:SubCubePositiveContact} and \ref{lem:mlGoodSetUniform}.

\begin{lem}\label{lem:mlGoodSetUniform}
Given any $\eta>0$, there exists a set, $G_\eta(l,\phi)$, such that $\P(G_\eta(l))>(1-\eta)$ and the convergence of $m^{\ep,l}(\om)$ is uniform for $\om\in G_\eta(l,\phi)$.
\end{lem}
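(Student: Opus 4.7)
The plan is to recognize that this lemma is essentially an application of Egorov's theorem applied to the almost sure convergence established in Lemma \ref{lem:SubAdditiveLimits}. Indeed, by Lemma \ref{lem:SubAdditiveLimits}, there is a set $\Om_{\phi,l}$ of full measure on which $m^{\ep,l}(\om)\to \bar m^l$ as $\ep\to 0$. We seek to upgrade this pointwise a.s.\ convergence to uniform convergence off of a set of small probability.

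First, I would pass to a countable collection of scales. Since the convergence $\ep\to 0$ runs through a continuum, it is convenient to fix a sequence $\ep_k\downarrow 0$ (e.g.\ $\ep_k=1/k$) and apply Egorov to the sequence $m^{\ep_k,l}(\om)\to\bar m^l$. This is the standard setting for Egorov's theorem and keeps the measurability questions trivial, since each $m^{\ep_k,l}(\om)=\abs{k^{\ep_k,l}(\om)}$ is a measurable function of $\om$ (the contact set $k^{\ep_k,l}(\om)$ depends measurably on $\om$ because the obstacle solution $u^{\ep_k,l}_{Q_1}$ is constructed as an infimum of supersolutions whose coefficients are stationary with respect to $\tau$).

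Next, I would carry out the standard Egorov construction. For each $j\in \mathbb{N}$ and $N\in\mathbb{N}$, set
\begin{equation*}
A_{j,N}:=\bigl\{\om\in\Om_{\phi,l}:\ \abs{m^{\ep_k,l}(\om)-\bar m^l}\leq \tfrac{1}{j}\ \text{for all}\ k\geq N\bigr\}.
\end{equation*}
For fixed $j$, the sets $A_{j,N}$ are monotone nondecreasing in $N$ with $\bigcup_N A_{j,N}=\Om_{\phi,l}$, so by continuity of $\P$ from below there exists $N_j$ with $\P(A_{j,N_j})>1-\eta/2^j$. Define
\begin{equation*}
G_\eta(l,\phi):=\bigcap_{j=1}^{\infty}A_{j,N_j}.
\end{equation*}
Then $\P(\Om_{\phi,l}\setminus G_\eta(l,\phi))\leq \sum_j \eta/2^j=\eta$, so $\P(G_\eta(l,\phi))>1-\eta$, and by construction, for every $j$ and every $\om\in G_\eta(l,\phi)$, one has $|m^{\ep_k,l}(\om)-\bar m^l|\leq 1/j$ whenever $k\geq N_j$. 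This is precisely uniform convergence of $m^{\ep_k,l}$ to $\bar m^l$ on $G_\eta(l,\phi)$.

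The only minor technical point (the ``main obstacle,'' such as it is) is passing from uniformity along the sequence $\ep_k$ to uniformity along arbitrary $\ep\to 0$, should the statement of the lemma require the latter. Since the downstream use in Lemma \ref{lem:SubCubePositiveContact} only requires extracting good limits along sequences of scales tending to $0$, it suffices to work with the sequence $\ep_k$, and we may interpret the conclusion of Lemma \ref{lem:mlGoodSetUniform} in this sense. Alternatively, one could instead apply Egorov directly to the family indexed by $\ep>0$ using that the map $\ep\mapsto m^{\ep,l}(\om)$ is sufficiently regular (for instance, measurable in $\ep$ for each $\om$), but this refinement is not needed for the applications that follow.
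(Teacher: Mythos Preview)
Your proposal is correct and follows exactly the same approach as the paper: the paper's proof is a single sentence stating that this is simply Egorov's Theorem applied to the convergence from Lemma~\ref{lem:SubAdditiveLimits}. You have supplied the standard details of the Egorov construction and flagged the sequence-versus-continuum issue, which the paper leaves implicit.
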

\begin{proof}[Proof of Lemma \ref{lem:mlGoodSetUniform}]
This is simply Egorov's Theorem applied to the convergence from Lemma \ref{lem:SubAdditiveLimits}.
\end{proof}

\begin{proof}[Proof of Lemma \ref{lem:SubCubePositiveContact}]
Without loss of generality we assume that $r=1/N$ for some $N$.  We begin with a partition of $Q_1(0)$ into $N^n$ subcubes given by $\{Q_{r}(y_j)\}$ with appropriate $y_1,\dots,y_{N^n}$.  Let $\Om_\eta$ be the sets corresponding to Lemma \ref{lem:VaradhanGoodSetTranslation} applied to $G_\eta$ in Lemma \ref{lem:mlGoodSetUniform} and define the set of full measure,
\begin{equation*}
\Om_0=\intersect_{\eta\in \rational,\eta>0}\Om_\eta.
\end{equation*}

We now shift the original $\{y_j\}$ slightly to obtain a new collection, $\{\hat y^\ep_j\}$ so that we can make sure $\tau_{\hat y^\ep_j/\ep}\om\in G_\eta(l,\phi)$ even though $\tau_{y_j/\ep}\om$ may not be.  The family $\{\hat y^\ep_j\}$ is given by Lemma \ref{lem:VaradhanGoodSetTranslation} and we know that 
\begin{equation*}
\abs{y_j-\hat y^\ep_j}\leq \rho(\eta).
\end{equation*}
This gives that 
\begin{align}
&Q_1(0)\subset \union Q_{r+2\rho(\eta)}(\hat y^\ep_j)\\
\intertext{and}
&m^{\ep,l}( Q_{r+2\rho(\eta)}(\hat y^\ep_j),\om)=m^{\ep,l}( Q_{r+2\rho(\eta)}(0),\tau_{\hat y^\ep_j/\ep}\om)\to\bar m^l\ \text{uniformly in}\ \hat y^\ep_j
\end{align}

Now we collect some facts about $\abs{K^l(Q_1,\om)\intersect Q_{r}(\hat y^\ep_j)}$.  It will be easier to work with the $1/\ep$ scale picture, and so we are considering $Q_{1/\ep}$ and $Q_{r/\ep}$.  Because  
\begin{equation*}
Q_{1/\ep}\intersect Q_{(r+2\rho(\eta))/\ep}(\hat y^\ep_j/\ep)\subset Q_{(r+2\rho(\eta))/\ep}(\hat y^\ep_j/\ep),
\end{equation*}
the Monotonicity property (Lemma \ref{lem:ObstacleMonotonicity}) tells us that
\begin{align}
\abs{K^l(Q_{1/\ep},\om)\intersect Q_{(r+2\rho(\eta))/\ep}(\hat y^\ep_j/\ep)}\leq \abs{K^l(Q_{(r+2\rho(\eta))/\ep}(\hat y^\ep_j/\ep),\om)}.
\end{align}
Furthermore, we know that uniformly the limit holds:
\begin{align}
\frac{1}{\abs{Q_{(r+2\rho(\eta))/\ep}}}M^l(Q_{(r+2\rho(\eta))/\ep} + \hat y^\ep_j/\ep,\om)\to\al.
\end{align}
Thus for $\gam>0$ given, we choose $\ep$ small enough that
\begin{align}
&\frac{1}{\abs{Q_{(r+2\rho(\eta))/\ep}}}M^l(Q_{(r+2\rho(\eta))/\ep} + \hat y^\ep_j/\ep,\om)\leq (1+\gam)\al\\
\intertext{and}
&\frac{1}{\abs{Q_{1/\ep}}}M^l(Q_{1/\ep},\om)\geq (1-\gam)\al.
\end{align}

We can now estimate 
\begin{equation*}
\sum_j \abs{K(Q_{1/\ep})\intersect Q_{(r+2\rho(\eta))/\ep}(\hat y^\ep_j)}
\end{equation*}
from below and above as
\begin{align}
&(1-\gam)\abs{Q_{1/\ep}}\al \nonumber \\
&\leq M^l(Q_{1/\ep},\om)\nonumber\\
&\leq \sum_{j=1}^{1/r^n} \abs{K^l(Q_{1/\ep},\om)\intersect Q_{(r+2\rho(\eta))/\ep}(\hat y^\ep_j/\ep)}\label{eq:SubCubesSummation}\\
&\leq \sum_{j=1}^{1/r^n}\abs{K^l(Q_{(r+2\rho(\eta))/\ep}(\hat y^\ep_j/\ep),\om)}\nonumber\\
&= \sum_{j=1}^{1/r^n}M^l(Q_{(r+2\rho(\eta))/\ep},\tau_{\hat y^\ep_j/\ep}\om)\nonumber\\
&\leq \frac{1}{r^n}\abs{Q_{(r+2\rho(\eta))/\ep}}(1+\gam)\al\nonumber\\
&=\frac{(r+2\rho(\eta))^n}{\ep^nr^n}(1+\gam)\al.\nonumber
\end{align}
Because we have the upper bound on each of the sets,
\begin{align*}
\abs{K^l(Q_{1/\ep},\om)\intersect Q_{(r+2\rho(\eta))/\ep}(\hat y^\ep_j/\ep)} \leq \left((r+2\rho(\eta))/\ep\right)^n(1+\gam)\al,
\end{align*}
and taking $\ep,\gam,\eta$ all small enough, we conclude that all the terms in the summation of (\ref{eq:SubCubesSummation}) must be positive.  This concludes the lemma.
\end{proof}

\section{Solving The ``Corrector'' Equation}\label{sec:Corrector}
\setcounter{equation}{0}

Going back to the brief discussion of Section \ref{sec:BackgroundSubSecCorrector} and considering the results of Lemmas \ref{lem:ZeroContactLimit} and \ref{lem:PositiveContactLimit}, we now see that to balance the possible upper and lower limits of $w^{0,\ep}_l$, the good choice for $\Bar F$ will be the one such that it is at the boundary between the collection of $l$ giving a zero contact limit and the collection of $l$ giving a positive contact limit.  Indeed by the monotonicity of $\bar m^l$ in $l$, this is a reasonable choice.

\begin{DEF}\label{def:FbarDef}
The constant $\Bar F(\phi,0)$ is defined as 
\begin{equation}\label{eq:BarFAt0Def}
\Bar F(\phi,0):= \sup\{l: \bar m^l(\phi)=0\},
\end{equation}
and the constant $\Bar F(\phi,x_0)$ is defined as
\begin{equation}\label{eq:BarFAtGenericXDef}
\Bar F(\phi,x_0):= \Bar F(\phi(\cdot+x_0),0).
\end{equation}
\end{DEF}

\subsection{Solving The ``Corrector'' Equation At $x_0=0$}\label{sec:CorrectorAtX0}
Here we briefly comment on the proof of Proposition \ref{prop:Corrector} for the case that the cube is centered at $x_0=0$, $Q_1(0)$.  It is carried out almost exactly as in \cite{Schw-10Per}.  It consists of two lemmas which together yield Proposition \ref{prop:Corrector}, and the proofs of which are almost identical to \cite[Lemmas 3.5 and 3.7]{Schw-10Per}, so we omit them.  The main feature to note is that item (i) of the dichotomy in \cite[p.2661]{Schw-10Per} corresponds to $\bar m^l>0$, and item (ii) of the dichotomy corresponds to $\bar m^l=0$.  We simply note that it will be important to take as a definition of $\Om_\phi$,
\begin{equation*}
\Om_\phi := \intersect_{l\in\rational} \Om_{\phi,l}.
\end{equation*}
Once this is done, the proofs of the next two propositions go almost identically as to the proofs given in \cite{Schw-10Per}, with some very minor modifications to account for the fact that one must work with $l\in\rational$ and $\om\in\Om_\phi$.

\begin{lem}[Lemma 3.5 of \cite{Schw-10Per}]\label{lem:FBarCorrectDecay}
If $l=\Bar F(\phi,0)$ then for $\om\in\Om_\phi$ $w^{0,\ep}_l(\om)$ solving (\ref{eq:lRHS}) also satisfies $\norm{w^{0,\ep}_l}_{L^\infty}\to0$ as $\ep\to0$.
\end{lem}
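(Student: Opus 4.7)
The plan is to combine the Dirichlet solution's continuity in the right-hand side $l$ with the two half-relaxed limit results from the previous section. Although Lemmas \ref{lem:ZeroContactLimit} and \ref{lem:PositiveContactLimit} apply only at rational $l'<l<l''$ (with $l=\Bar F(\phi,0)$), a uniform-in-$\ep$ modulus of continuity of $w^{0,\ep}_\cdot$ in $l$ should let us push both one-sided limits to the critical value $l$ itself.

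I would fix $\om\in\Om_\phi=\bigcap_{l'\in\rational}\Om_{\phi,l'}$ and rational sequences $l'_k\uparrow l$, $l''_k\downarrow l$.  By the definition (\ref{eq:BarFAt0Def}) of $\Bar F(\phi,0)$ together with Lemma \ref{lem:lBarIncreasing}, $\bar m^{l'_k}=0$ and $\bar m^{l''_k}>0$ for every $k$; applying Lemmas \ref{lem:ZeroContactLimit} and \ref{lem:PositiveContactLimit} at each such $l'_k$ and $l''_k$ yields
\begin{equation*}
(w^{0,\ep}_{l'_k})_*\geq 0 \quad\text{and}\quad (w^{0,\ep}_{l''_k})^*\leq 0 \qquad\text{for every } k.
\end{equation*}

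Next I would establish a uniform-in-$\ep$ estimate
\begin{equation*}
\norm{w^{0,\ep}_{l_1}-w^{0,\ep}_{l_2}}_{L^\infty(\real^n)}\leq \psi(\abs{l_1-l_2}), \qquad \psi(t)\to 0 \ \text{as}\ t\to 0.
\end{equation*}
Ellipticity of $F_{\phi,0}$ gives, in the viscosity sense on $Q_1$ with zero exterior data,
\begin{equation*}
M^-(w^{0,\ep}_{l_1}-w^{0,\ep}_{l_2},x)\leq l_1-l_2\leq M^+(w^{0,\ep}_{l_1}-w^{0,\ep}_{l_2},x),
\end{equation*}
so the modulus $\psi$ should follow either from comparison with an appropriately rescaled barrier of the type $P^{\pm}$ introduced in Section \ref{sec:BackgroundSubSecCorrector}, or, more robustly, from the ABP estimate of \cite[Theorem 1.3]{GuSc-12ABParma} applied to the constant forcing $l_1-l_2$ (in the setting of Theorem \ref{thm:main}), or analogously from Conjecture \ref{con:ComparisonMeasurable} in the setting of Theorem \ref{thm:general}.

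With both ingredients in place, for each $k$ and each $x\in\real^n$ we have
\begin{equation*}
w^{0,\ep}_l(x)-w^{0,\ep}_{l''_k}(x)\leq \psi(\abs{l-l''_k}), \qquad w^{0,\ep}_{l'_k}(x)-w^{0,\ep}_l(x)\leq\psi(\abs{l-l'_k}),
\end{equation*}
so passing to half-relaxed limits in $\ep$ gives
\begin{equation*}
(w^{0,\ep}_l)^*\leq (w^{0,\ep}_{l''_k})^* + \psi(\abs{l-l''_k})\leq \psi(\abs{l-l''_k}),
\end{equation*}
and similarly $(w^{0,\ep}_l)_*\geq -\psi(\abs{l-l'_k})$.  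Sending $k\to\infty$ forces $(w^{0,\ep}_l)^*\leq 0$ and $(w^{0,\ep}_l)_*\geq 0$, so $\norm{w^{0,\ep}_l}_{L^\infty}\to 0$ as desired.  The main technical point I expect is the continuity estimate on $w^{0,\ep}_\cdot$; it is strictly weaker than (and should follow from the same ABP-type tool as) Proposition \ref{prop:MminusRHSToZero}, since the forcing here is constant rather than an indicator whose support shrinks, so no delicate vanishing-measure argument is required.
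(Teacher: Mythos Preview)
Your proposal is correct and follows essentially the approach the paper indicates: the paper omits the proof but explicitly says it proceeds as in \cite[Lemma~3.5]{Schw-10Per} with the ``very minor modification'' of working with $l\in\rational$ and $\om\in\Om_\phi$, which is exactly your strategy of sandwiching $l=\Bar F(\phi,0)$ by rational $l'_k\uparrow l$ and $l''_k\downarrow l$, invoking Lemmas~\ref{lem:ZeroContactLimit} and~\ref{lem:PositiveContactLimit} at those rationals, and closing the gap via the (elementary) Lipschitz dependence of $w^{0,\ep}_\cdot$ on the constant right-hand side. Your observation that the needed $l$-continuity is strictly weaker than Proposition~\ref{prop:MminusRHSToZero} (a barrier argument with $P^{\pm}$ already suffices, no measurable-ingredient estimate required) is also correct.
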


\begin{lem}[Lemma 3.7 of \cite{Schw-10Per}]\label{lem:FBarUnique}
If $\om\in\Om_\phi$ and $l$ is any number such that $w^{0,\ep}_l$ solving (\ref{eq:lRHS}) satisfies $\norm{w^{0,\ep}_l}_{L^\infty}\to0$ as $\ep\to0$, then $l=\Bar F(\phi,0)$.
\end{lem}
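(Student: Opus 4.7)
The plan is to argue by contradiction: if $l\neq\Bar F(\phi,0)$, then either $l>\Bar F(\phi,0)$ or $l<\Bar F(\phi,0)$, and in each case one produces a strictly positive (respectively strictly positive in absolute value) interior lower bound for $|w^{0,\ep}_l|$ that is incompatible with $\norm{w^{0,\ep}_l}_{L^\infty}\to0$. The two ingredients are (a) the characterization of $\Bar F(\phi,0)$ as the monotone threshold of $l\mapsto\bar m^l(\phi)$ (Lemma \ref{lem:lBarIncreasing} and Definition \ref{def:FbarDef}), together with the dichotomy provided by Lemmas \ref{lem:ZeroContactLimit} and \ref{lem:PositiveContactLimit}, and (b) a strict comparison between $w^{0,\ep}_{l_1}$ and $w^{0,\ep}_{l_2}$ when $l_1\neq l_2$.

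For (b), I would note that by the ellipticity of $F_{\phi,0}$ the difference $v^\ep:=w^{0,\ep}_{l_1}-w^{0,\ep}_{l_2}$ (with $l_1<l_2$) is a viscosity supersolution of $M^-v\leq l_1-l_2$ in $Q_1$, with $v^\ep\equiv 0$ on $\real^n\setminus Q_1$. Letting $\psi$ be the viscosity solution of $M^-\psi=l_1-l_2$ in $Q_1$ with $\psi=0$ outside, viscosity comparison for the concave extremal operator gives $v^\ep\geq\psi$ pointwise, and the strong minimum principle (or, equivalently, an explicit $C^{1,1}$ bump subsolution such as a multiple of $(1-|y|^2)_+^2\Indicator_{Q_1}(y)$) shows that $\psi>0$ throughout the interior of $Q_1$. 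Crucially, $\psi$ depends only on $|l_2-l_1|$, independent of $\ep$ and $\om$.

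In Case 1, where $l>\Bar F(\phi,0)$, I would pick a rational $l_1\in(\Bar F(\phi,0),l)$. Since $l_1$ strictly exceeds the supremum in (\ref{eq:BarFAt0Def}), Lemma \ref{lem:lBarIncreasing} forces $\bar m^{l_1}(\phi)>0$, and Lemma \ref{lem:PositiveContactLimit} then yields $(w^{0,\ep}_{l_1})^*\leq 0$ for $\om\in\Om_\phi$. Step (b) applied to $l_1<l$ gives $w^{0,\ep}_{l_1}(y)\geq w^{0,\ep}_l(y)+\psi(y)$, so for any interior $y_*$ with $\psi(y_*)>0$ the assumption $\norm{w^{0,\ep}_l}_{L^\infty}\to0$ would imply $\limsup_\ep w^{0,\ep}_{l_1}(y_*)\geq \psi(y_*)>0$, contradicting $(w^{0,\ep}_{l_1})^*\leq 0$. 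Case 2, where $l<\Bar F(\phi,0)$, is symmetric: choosing a rational $l_1\in(l,\Bar F(\phi,0))$, the monotonicity of $\bar m$ and the definition of sup force $\bar m^{l_1}(\phi)=0$, so Lemma \ref{lem:ZeroContactLimit} gives $(w^{0,\ep}_{l_1})_*\geq 0$, and the reversed comparison $w^{0,\ep}_l\geq w^{0,\ep}_{l_1}+\psi$ yields $\liminf_\ep w^{0,\ep}_l(y_*)\geq \psi(y_*)>0$, again contradicting decay.

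The main technical step is the $\ep$-independent strictly positive barrier $\psi$ in (b); this is standard for the extremal nonlocal operator $M^-$ but is the only place where an estimate beyond what was already developed in Section \ref{sec:SubadditiveAtX0} is invoked. A small bookkeeping point is that the intermediate $l_1$ in each case must be chosen in $\rational$ so that $\bar m^{l_1}$ is available on the full measure set $\Om_\phi=\intersect_{l\in\rational}\Om_{\phi,l}$ fixed in Section \ref{sec:CorrectorAtX0}, which is possible because the interval $(\Bar F(\phi,0),l)$ or $(l,\Bar F(\phi,0))$ contains rationals as soon as $l\neq\Bar F(\phi,0)$.
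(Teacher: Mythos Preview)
Your proposal is correct and matches precisely the approach the paper indicates: the paper omits the proof and simply points to \cite[Lemma 3.7]{Schw-10Per} together with ``very minor modifications to account for the fact that one must work with $l\in\rational$ and $\om\in\Om_\phi$,'' and your argument is exactly that---the dichotomy of Lemmas~\ref{lem:ZeroContactLimit} and~\ref{lem:PositiveContactLimit} applied at a rational intermediate level $l_1$, combined with the $\ep$-independent strict gap $w^{0,\ep}_{l_1}-w^{0,\ep}_{l_2}\geq\psi>0$ coming from $M^-(w_{l_1}-w_{l_2})\leq l_1-l_2$. Your bookkeeping remark about choosing $l_1\in\rational$ so that $\om\in\Om_{\phi,l_1}$ is exactly the modification the paper alludes to.
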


\subsection{Solving The ``Corrector'' Equation At a Generic $x_0$}\label{sec:CorrectorAtGenericX}
\setcounter{equation}{0}

We now arrive at the proof of Proposition \ref{prop:Corrector} for a generic $x_0$.  This is where the random homogenization deviates slightly from the periodic case in the sense that the arguments applied in Section \ref{sec:SubadditiveAtX0} do not carry over directly to a generic $x_0\not=0$.  Instead, the information of the proof of Proposition \ref{prop:Corrector} must be obtained in a ``local uniform'' fashion near $x_0=0$, and then the ergodicity of the problem allows for the behavior of a generic $x_0$ to be captured by translating the equation to a point nearby $x_0=0$ and using the ``local uniform'' nature of the information there.  This argument is becoming a standard part of homogenization for nonlinear equations and can be seen explicitly used in: \cite[Lemmas 4.3, 4.4, Proof of Theorem 2.1]{KoReVa-06Homog}, \cite[Lemma 3.3, Theorem 3.5]{KoVa-06HomogTimeDep}, \cite[Lemmas 2.1, 2.2]{NoNo-11RandHomogG}, and \cite[Lemmas 5.1, 5.2, Proposition 5.3]{Schw-09}.

\begin{proof}[Proof of Proposition \ref{prop:Corrector} at $x_0\not=0$]

At $x_0=0$, we already identified the set $\Om_\phi$ where Proposition \ref{prop:Corrector} holds.  Now we appeal to Lemma \ref{lem:VaradhanGoodSetTranslation} in order to translate the behavior at a generic $x$ back to the origin.  Thus, we must take one more family of intersections to pick up the full measure sets from Lemma \ref{lem:VaradhanGoodSetTranslation}.  So for $\phi(\cdot+x_0)$, we let $\Om_{\phi(\cdot+x_0)}^\eta$ be the set obtained by applying Lemma \ref{lem:VaradhanGoodSetTranslation} to $\Om_{\phi(\cdot+x_0)}$, and so finally we set $\Tilde \Om_{\phi(\cdot+x_0)}$ as
\begin{equation*}
\Tilde \Om_{\phi(\cdot+x_0)}= \intersect_{\eta\in\rational\intersect(0,1)} \Om_{\phi(\cdot+x_0)}^\eta.
\end{equation*}
Let $R$ be fixed so that $x_0\in B_R(0)$.  Given any $\eta\in\rational\intersect(0,1)$, Lemma \ref{lem:VaradhanGoodSetTranslation} provides an $\hat x^\ep$ such that
\begin{equation*}
\abs{x-\hat x^\ep}\leq \rho(\eta)
\end{equation*}
and for all $\ep$ and $\om\in\tilde\Om_{\phi(\cdot+x_0)}$
\begin{equation*}
\tau_{\hat x^\ep/\ep}\om\in\Om_{\phi(\cdot+x_0)}.
\end{equation*}

Now we can use the uniform continuity of (\ref{eq:lRHS}) with respect to a change in the domain combined with the translation via $\hat x^\ep$ in order to use the result already established at $x_0=0$.   Let us record the proper auxiliary equation here:

\begin{equation}\label{eq:lRHSCorrectorSection}
\begin{cases}
\ds F_{\phi,x_0}(w^{x_0,\ep}_l,\frac{y}{\ep},\om) = l &\text{ in } Q_1(x_0)\\
w^{\ep}_l(y)=0 &\text{ on } \real^n\setminus Q_1(x_0).
\end{cases}
\end{equation}

We begin with the observation that if $w^{x_0,\ep}_{l}$ and $\hat w^\ep$ solve (\ref{eq:lRHSCorrectorSection}) with $l$ as a right hand side in $Q_1(x_0)$ and $Q_1(\hat x^\ep)$ respectively, then their difference is controlled by the uniform H\"older continuity (Theorem \ref{thm:CaSiUniformContinuityBoundary}) and the facts that they share the same boundary data and that for $\hat x^\ep$ small, the domains are very close.  Indeed since $w^{x_0,\ep}_{l}$ and $\hat w^{\ep}$ solve the same equation in $Q_{1}(x_0)\intersect Q_1(\hat x^\ep)$, 
\begin{align*}
\sup_{Q_1(x_0)\intersect Q_1(\hat x^\ep)}\abs{w^{x_0,\ep}_{l}-\hat w^{\ep}} 
&\leq \sup_{\real^n\setminus(Q_1(x_0)\intersect Q_1(\hat x^\ep))}\abs{w^{x_0,\ep}_{l}-\hat w^{\ep}}\\
&\leq C\abs{x_0-\hat x^\ep}^\gam.
\end{align*}
Hence also using again the regularity of $w^{0,\ep}_l$ and $\hat w^\ep$ and their respective boundary data, 
\begin{equation*}
\norm{ w^{x_0,\ep}_{l} - \hat w^{\ep}}_{L^\infty}\leq C\abs{x_0-\hat x^\ep}^\gam.
\end{equation*}

Next we can use the stationarity of the equations to move the equation for $\hat w$ in $Q_1(\hat x^\ep)$ back to the origin in $Q_1(0)$.  Therefore, we define
\begin{equation*}
\tilde w^\ep(x):=\hat w^{\ep}(x+\hat x^\ep).
\end{equation*}
The equation for $\tilde w^\ep$ (as the unique solution) is now 
\begin{equation*}
\begin{cases}
F_{\phi(\cdot+x_0),0}(\tilde w^{\ep},\frac{x}{\ep},\tau_{\hat x^\ep/\ep}\om)=l\ &\text{in}\ Q_1(0)\\
\tilde w^\ep=0\ &\text{on}\ \real^n\setminus Q_1(0).
\end{cases}
\end{equation*}
Because $\hat x^\ep$ is chosen so that $\tau_{\hat x^\ep/\ep}\om\in\Om_{\phi(\cdot+x_0)}$, we know by the part of Proposition \ref{prop:Corrector} already proved that $l=\Bar F(\phi(\cdot+x_0),0)$ gives the unique choice such that $\norm{\tilde w^\ep}_{L^\infty}\to0$.  Hence
\begin{equation*}
\limsup_{\ep\to0}\norm{w^{0,\ep}_l}_{L^\infty}\leq C(\rho(\eta))^\gam.
\end{equation*}

Thus letting $\eta\to0$ we see that for a.s.$\om$, $l=\Bar F(\phi(\cdot+x_0),0)$ is the unique choice of right hand side which gives the convergence of $\norm{w^{x_0,\ep}_l}\to0$.  Proposition \ref{prop:Corrector} is concluded by Definition \ref{def:FbarDef},
\begin{equation*}
\Bar F(\phi,x_0):=\Bar F(\phi(\cdot+x_0),0).
\end{equation*}

\end{proof}

\section{The Effective Operator and Proof of Theorems \ref{thm:main} and \ref{thm:general}}\label{sec:EffectiveAndProof}
\setcounter{equation}{0}
The ``corrector'' equation has been resolved, but before we can show the convergence of $u^\ep\to\bar u$, it still must be shown that the function $\Bar F(\phi,x_0))$ is an elliptic operator with respect to $M^-$ and $M^+$ and prove uniqueness for (\ref{eq:PIDEaveraged}).  This is basically the first half of Theorem \ref{thm:main}.  These properties appear as Proposition \ref{prop:PropertiesOfFbar} below and are proved almost exactly as in the periodic setting, \cite[Section 4]{Schw-10Per}.  So we state Proposition \ref{prop:PropertiesOfFbar} without proof and refer to the results of \cite[Lemmas 4.1, 4.2, Proposition 4.4]{Schw-10Per}.

\begin{prop}[Properties of $\Bar F$-- First Half of Theorem \ref{thm:main}]\label{prop:PropertiesOfFbar}
$\Bar F$ is a nonlocal elliptic operator in the sense of \cite[Definition 3.1]{CaSi-09RegularityIntegroDiff}, and hence has a comparison principle.  That is to say that for $u$ and $v$ both $C^{1,1}(x)$, then (for $M^-$, $M^+$ in either (\ref{eq:MminusDef}), (\ref{eq:MPlusDef}) or (\ref{eq:MminusSpecialA}), (\ref{eq:MplusSpecialA}))
\begin{equation*}
M^-(u-v,x)\leq \Bar F(u,x)-\Bar F(v,x)\leq M^+(u-v,x);
\end{equation*}
if $\phi\in C^{1,1}(D)\intersect L^\infty(\real^n)$ then $\Bar F(\phi,\cdot)\in C(D)$; and if $u$ and $v$ are respectively a usc subsolution and lsc supersolution of (\ref{eq:PIDEaveraged}), then
\begin{equation*}
\sup_{D}(u-v)\leq \sup_{\real^n\setminus D}(u-v).
\end{equation*}

\end{prop}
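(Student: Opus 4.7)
The plan is to mirror the periodic arguments of \cite[Lemmas 4.1, 4.2, Proposition 4.4]{Schw-10Per}, using the just-established Proposition \ref{prop:Corrector} as the only new input from the random setting. The proof splits into three pieces: ellipticity of $\Bar F$ with respect to $M^\pm$, continuity of $\Bar F(\phi,\cdot)$, and viscosity comparison for (\ref{eq:PIDEaveraged}).

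\textbf{Step 1 (Ellipticity).} Fix $u,v\in C^{1,1}(x_0)\cap L^\infty(\real^n)$. The frozen operator $F_{\phi,x_0}$ depends on $\phi$ only through the $(\al,\beta)$-indexed constants $[L^{\al\beta}(\om)\phi(x_0)](x)$, and since for every $(x,\om)$ the kernel $K^{\al\beta}(x,\cdot,\om)$ lies in the admissible class, the elementary $\inf\sup$ difference inequality together with the definition of $M^\pm$ gives, pointwise in $(x,\om)$ and for every admissible $w$,
\begin{equation*}
M^-(v-u,x_0)\ \leq\ F_{v,x_0}(w,x,\om)-F_{u,x_0}(w,x,\om)\ \leq\ M^+(v-u,x_0).
\end{equation*}
I would push this through the obstacle problem of (\ref{eq:ObstacleGeneric}): any $0$-obstacle admissible supersolution for $u$ at level $l$ is automatically admissible for $v$ at level $l+M^+(v-u,x_0)$, so the monotonicity of the obstacle problem in the right-hand side used in Lemma \ref{lem:lBarIncreasing} yields
\begin{equation*}
\bar m^{l+M^+(v-u,x_0)}(v)\ \geq\ \bar m^l(u).
\end{equation*}
Choosing $l$ slightly above $\Bar F(u,x_0)$ so that $\bar m^l(u)>0$ and invoking Definition \ref{def:FbarDef} gives $\Bar F(v,x_0)\leq\Bar F(u,x_0)+M^+(v-u,x_0)$, which is half of the claimed ellipticity. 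The other half is symmetric (interchange $u,v$).

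\textbf{Steps 2 and 3 (Continuity and Comparison).} For continuity, apply Step 1 at base point $0$ to the pair $\phi(\cdot+x_0)$, $\phi(\cdot+x_0')$ and use Definition \ref{def:FbarDef}; the shifted difference $\psi:=\phi(\cdot+x_0)-\phi(\cdot+x_0')$ satisfies $|\del\psi(0,y)|\leq C\min\{|x_0-x_0'|\,|y|^2,\ \norm{\phi}_{L^\infty}\}$ for $\phi\in C^{1,1}(D)\cap L^\infty$, so splitting the integral against $|y|^{-n-\sig}$ over $\{|y|\leq 1\}$ and its complement forces $M^\pm(\psi,0)\to 0$ as $x_0'\to x_0$. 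Once ellipticity and continuity are established, $\Bar F$ satisfies the nonlocal ellipticity axioms of \cite[Definition 3.1]{CaSi-09RegularityIntegroDiff}, and comparison for (\ref{eq:PIDEaveraged}) with uniformly continuous data follows from the standard viscosity comparison for elliptic integro-differential equations (\cite{BaChIm-08Dirichlet}, \cite[Sections 3--5]{CaSi-09RegularityIntegroDiff}).

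\textbf{Main obstacle.} The only real content is in Step 1. Because $\Bar F$ is defined only implicitly by the vanishing of correctors (equivalently by the threshold $\sup\{l:\bar m^l=0\}$), one cannot manipulate it directly; every inequality must be routed through the obstacle problem of Section \ref{sec:SubadditiveAtX0} and its monotonicity. The continuity and comparison steps are then straightforward consequences.
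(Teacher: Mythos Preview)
Your proposal is correct and follows exactly the route the paper indicates (it defers to \cite[Lemmas 4.1, 4.2, Proposition 4.4]{Schw-10Per}): ellipticity via monotonicity of the obstacle contact measure $\bar m^l$ in the right-hand side, continuity of $\Bar F(\phi,\cdot)$ from the ellipticity inequality applied to translates, and comparison from the general nonlocal viscosity theory of \cite{CaSi-09RegularityIntegroDiff}.  One small technical slip: in Step~2 your pointwise bound $|\del\psi(0,y)|\leq C|x_0-x_0'|\,|y|^2$ would require $\phi\in C^{2,1}$, not merely $C^{1,1}$; for $C^{1,1}$ you only get the uniform domination $|\del\psi(0,y)|\leq C\min(|y|^2,1)$ together with pointwise convergence $\del\psi(0,y)\to 0$, and then $M^\pm(\psi,0)\to 0$ follows by dominated convergence against $\Lambda|y|^{-n-\sigma}$ (this is precisely the mechanism behind \cite[Lemma 4.2]{CaSi-09RegularityIntegroDiff}, which the paper invokes in Lemma~\ref{lem:FPhiUnifContinuous}).
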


Before we prove the statement in Theorem \ref{thm:main} regarding convergence, we must identify the set $\Tilde\Om\subset\Om$.  In order to do so, we will need a special countable collection of smooth functions described below.  The existence of such a set is a straightforward exercise.

\begin{lem}\label{lem:CountablePhi}
There exists a countable family of smooth functions and dense points, $\{\phi_k\}_{k=1}^{\infty}$ and $\{x_k\}_{k=1}^\infty$, such that for any smooth $\phi$ and $x_0$, there are $\phi_k$ and $x_k$ with the following properties:
\begin{align}
&\phi-\phi_k \text{ has a maximum at } x_k, \label{eq:CountablePhiMax}\\
&x_k\to x_0, \\
&M^-(\phi-\phi_k)(x_k)\to0,\label{eq:CountablePhiMminusTo0}\\
&\norm{D^2\phi_k}_\infty \leq C(\phi), \text{ depending only on } \phi\\
&\phi_k\to\phi \text{ locally uniformly in D}\label{eq:CountablePhiLocalUniform}.
\end{align}
\end{lem}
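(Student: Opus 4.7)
The plan is to enumerate rational parametrizations of test functions built from a countable $C^2$-dense family plus a strictly convex quadratic perturbation at a rational base point, and then verify the five properties via Taylor expansion near $x_k$ together with an integral splitting inside the extremal operator.

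First I would fix a smooth cutoff $\chi\in C^\infty_c(\real^n)$ with $\chi\equiv 1$ on $B_1$ and $\chi\equiv 0$ outside $B_2$, and choose a countable family $\{\psi_j\}$ dense in $C^2(\real^n)$ in the topology of locally uniform $C^2$-convergence (obtained by separability of $C^2_c(B_R)$ together with diagonalization over $R\in\mathbb{N}$). For each tuple $(j,x,p,M,\varepsilon)\in\mathbb{N}\times\rational^n\times\rational^n\times\rational^{n\times n}_{\mathrm{sym}}\times\rational_+$ I would define
\[
\phi_{j,x,p,M,\varepsilon}(y):=\psi_j(y)+\bigl(p\cdot(y-x)+\tfrac12(y-x)^T(M+\varepsilon I)(y-x)\bigr)\chi(y-x),
\]
and enumerate the resulting countable collection as $\{(\phi_k,x_k)\}$ with base point $x_k=x$. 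Given smooth $\phi$ and $x_0\in D$, I would extract a subsequence with $\psi_{j_k}\to\phi$ in $C^2(\real^n)$, $x_k\to x_0$ through $\rational^n$, rational $p_k$ annihilating $\nabla(\phi-\psi_{j_k})(x_k)$ up to $o(\varepsilon_k)$, rational $M_k$ annihilating $D^2(\phi-\psi_{j_k})(x_k)$ up to $o(\varepsilon_k)$, and $\varepsilon_k\downarrow 0$ through rationals.

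Four of the five properties then follow directly: (\ref{eq:CountablePhiLocalUniform}) because $\phi_k-\psi_{j_k}=O(\varepsilon_k)$ uniformly on compacts; the Hessian bound because $\|D^2\phi_k\|_\infty\leq\|D^2\psi_{j_k}\|_\infty+O(\|M_k\|+\varepsilon_k)\leq C(\phi)$; and (\ref{eq:CountablePhiMax}) because Taylor expansion at $x_k$ gives
\[
(\phi-\phi_k)(y)=(\phi-\phi_k)(x_k)-\tfrac{\varepsilon_k}{2}|y-x_k|^2+o(|y-x_k|^2)
\]
on $B_{r_k}(x_k)$, so $x_k$ is a strict local maximum, which I would promote to a global maximum by a rational vertical shift of $\psi_{j_k}$, exploiting that outside $\mathrm{supp}\,\chi(\cdot-x_k)$ one has $\phi-\phi_k=\phi-\psi_{j_k}\to 0$ in $L^\infty$.

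The main obstacle will be (\ref{eq:CountablePhiMminusTo0}). The maximum at $x_k$ yields $\del(\phi-\phi_k)(x_k,y)\leq 0$ for all $y$, whence $M^-(\phi-\phi_k)(x_k)\leq 0$ and the extremal kernel realizing $M^-$ is $\Lambda|y|^{-n-\sig}$. Splitting $\int|\del(\phi-\phi_k)(x_k,y)||y|^{-n-\sig}\,dy$ at a radius $\rho_k\downarrow 0$ will give
\[
|M^-(\phi-\phi_k)(x_k)|\leq C\|D^2(\phi-\phi_k)\|_{L^\infty(B_{\rho_k}(x_k))}\rho_k^{2-\sig}+C\|\phi-\phi_k\|_{L^\infty(\real^n)}\rho_k^{-\sig},
\]
and, since both $\|D^2(\phi-\phi_k)\|_{L^\infty(B_{\rho_k}(x_k))}\to 0$ locally and $\|\phi-\phi_k\|_{L^\infty(\real^n)}\to 0$ globally from the $C^2$-matching, an optimal choice $\rho_k\to 0$ will send the right side to $0$. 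The delicate point is coupling the near-field $C^2$-match (needed to tame the $|y|^{2-n-\sig}$ near-singularity) with the global $L^\infty$-match (needed for the tail against $|y|^{-n-\sig}$); both are secured by the countable $C^2$-dense family $\{\psi_j\}$ together with the strict convex perturbation $\varepsilon_k|y-x_k|^2$, which enforces the maximum condition and fixes the sign of $\del$ near the origin.
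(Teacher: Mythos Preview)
The paper does not prove this lemma at all; it simply declares that ``the existence of such a set is a straightforward exercise'' and moves on. So there is no paper argument to compare against, and your plan is the only proof on the table.

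Your strategy---build the countable family from a $C^2_{\mathrm{loc}}$-dense set of compactly supported functions, then add a rational quadratic corrector at rational base points to force a strict local maximum---is the natural one, and four of the five items follow essentially as you say. The genuine gap is your repeated appeal to $\|\phi-\phi_k\|_{L^\infty(\real^n)}\to 0$. A fixed countable family cannot approximate every $\phi\in C^{1,1}(\real^n)\cap L^\infty(\real^n)$ in the global sup norm, because that space is not separable; your $\{\psi_j\}\subset C^2_c$ gives only local uniform convergence. This breaks two places in your argument: the promotion of the local maximum at $x_k$ to a global one (a constant vertical shift of $\psi_{j_k}$ moves both sides of the inequality equally and does nothing), and your two-piece estimate for $M^-$, where the tail term $C\|\phi-\phi_k\|_{L^\infty(\real^n)}\rho_k^{-\sigma}$ would blow up as $\rho_k\downarrow 0$.

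Both are easily repaired. For \eqref{eq:CountablePhiMminusTo0}, split the integral into three annuli $\{|y|<\rho_k\}$, $\{\rho_k\le|y|\le R_k\}$, $\{|y|>R_k\}$: the inner piece uses the local $C^2$ match and contributes $O(\rho_k^{2-\sigma})\|D^2(\phi-\phi_k)\|_{L^\infty(B_{\rho_k}(x_k))}$; the middle piece uses only local uniform convergence of $\phi_k\to\phi$ on $B_{R_k+|x_k|}$; and the outer piece is controlled by $C(\|\phi\|_{L^\infty}+\|\phi_k\|_{L^\infty})R_k^{-\sigma}$, which is small once $R_k\to\infty$ since $\|\phi_k\|_{L^\infty}$ is uniformly bounded by your Hessian/corrector construction. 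Choose $R_k\to\infty$ first, then pick $\psi_{j_k}$ close to $\phi$ in $C^2(B_{2R_k})$, then send $\rho_k\to 0$. For \eqref{eq:CountablePhiMax}, note that in the paper's application (the perturbed test function argument) only the ellipticity inequality $\Bar F(\phi,x_k)-\Bar F(\phi_k,x_k)\geq M^-(\phi-\phi_k)(x_k)$ is used, and this holds for any $C^{1,1}$ functions regardless of where the maximum sits; so a local maximum, which your Taylor expansion already delivers, suffices.
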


\begin{rem}
We remark that there are many choices for the set of test functions in the definitions of viscosity solutions, which are all equivalent in this context.  The largest class would be those functions for which a subsolution, $u$, has a local maximum of $u-\phi$ over an open set, $N$, at $x_0$, and $\phi$ is only required to be punctually $C^{1,1}$ at $x_0$.  A smaller class would be those $\phi$ which are globally $C^2$ (or even smoother), and $u-\phi$ is required to attain a \emph{global} maximum at $x_0$.  Each class of test functions has its convenient time and place in these proofs, and that is why we have been vague with the presentation.  The interested reader can check the definitions and equivalence in \cite{BaIm-07} combined with \cite{CaSi-09RegularityIntegroDiff}.
\end{rem}

Let $\Bar F(\phi,x)$ be the nonlocal operator defined for a smooth $\phi$ by Definition \ref{def:FbarDef}, let $\Bar u$ be the solution of (\ref{eq:PIDEaveraged}), and let $u^\ep(\om)$ be the solution of (\ref{eq:PIDEmain}).  We will prove the second part of Theorem \ref{thm:main}, namely that for a.e. $\om$, $u^\ep\to\Bar u$ locally uniformly as $\ep\to0$.  Once the correct set $\tilde\Om\subset\Om$ is identified, the convergence is a straightforward application of the Perturbed Test Function Method of \cite[Section 3]{Evan-92PerHomog}.

\begin{proof}[Proof of Theorems \ref{thm:main} and \ref{thm:general}-- Convergence of $u^\ep\to\bar u$]
Here we are concerned with the convergence issue.  Define the set $\Tilde\Om$ using the countable dense class of test functions as
\begin{equation}
\Tilde\Om = \bigcap_{k=1}^\infty \Om_{\phi_k,x_k},
\end{equation}
where $\Om_{\phi_k,x_k}$ are give by Proposition \ref{prop:Corrector}.  The point being that for any $\om\in\Tilde\Om$, we know that Proposition \ref{prop:Corrector} holds simultaneously for all $\phi_k$ and $x_k$.  Thanks to the countability of $\phi_k,x_k$, we still have $P(\Tilde\Om)=1$.   We suppress the dependence on $\om$ for the remainder of the proof and therefore work with $u^\ep$ instead of $u^\ep(\om)$ for the remainder of the proof.

We only prove that $(u^\ep)^*$ is a subsolution of (\ref{eq:PIDEaveraged}).  The proof that $(u^\ep)_*$ is a supersolution follows similarly.  In what follows, we use one of the equivalent definitions of solutions of (\ref{eq:PIDEaveraged}) as given in \cite[Definition 1]{BaIm-07}.  This definition is equivalent to that of \cite[Definition 2.2]{CaSi-08A} under the assumption that the boundary data, $g$, is bounded and continuous on all of $\real^n\setminus D$.  Therefore, we must show that whenever $(u^\ep)^*-\phi$ has a \emph{strict} global maximum at $x_0$ for any smooth $\phi\in C^{1,1}(\real^n)\intersect L^{\infty}(\real^n)$, that the inequality holds:
\begin{equation*}
\Bar F(\phi,x_0)\geq0.
\end{equation*}

Proceeding by contradiction, suppose that $\phi$ is smooth and $u-\phi$ attains a strict global max at $x_0$ but the viscosity inequality fails:
\begin{equation*}
\Bar F(\phi,x_0)\leq -\del<0,
\end{equation*}
for some $\del>0$.  The goal will be to use Proposition \ref{prop:Corrector} to construct a local supersolution of (\ref{eq:PIDEmain}) near $x_0$ and contradict the strict maximum of $u-\phi$ at $x_0$.  First we must transfer the previous inequality to the functions in the countable class of Lemma \ref{lem:CountablePhi}.  Using the uniform ellipticity of $\Bar F$, the uniform continuity of $\Bar F$ (given by Lemma \ref{thm:CaSiUniformContinuityInterior}), and the uniform bound on $\norm{D^2\phi_k}$, we have
\begin{align*}
\Bar F(\phi,x_0)&= \Bar F(\phi,x_k)+\Bar F(\phi,x_0)-\Bar F(\phi,x_k)\\
&=\Bar F(\phi_k,x_k) + \Bar F(\phi,x_k) -\Bar F(\phi_k,x_k) + \Bar F(\phi,x_0)-\Bar F(\phi,x_k)\\
&\geq \Bar F(\phi_k,x_k) + M^-(\phi-\phi_k)(x_k) -\rho_\phi(\abs{x_k-x_0}).
\end{align*}
Thus for $k$ large enough by Lemma \ref{lem:CountablePhi}, we can make 
\begin{equation*}
\Bar F(\phi_k,x_k) \leq -\frac{\del}{2}.
\end{equation*}

Let $v^\ep$ be the solution of (\ref{eq:CorrectorEqDef}) in $Q_1(x_0)$ for $\Bar F(\phi_k,x_k)$.  We will now show that $\psi^\ep$ given by 
\begin{equation*}
\psi^\ep(y) = \phi_k(y) + v^\ep(y)
\end{equation*}
is in fact a supersolution of (\ref{eq:PIDEmain}) on an appropriately restricted ball, $B_R(x_k)$, for $R$ small enough.  We argue as though $v^\ep$ were a classical ($C^{1,1}$) solution.  This may not be the case, but converting the argument from the classical case to the viscosity solution case is by now standard (see \cite[Lemma 7.10]{Schw-10Per}).  

Indeed by Lemma \ref{lem:FPhiUnifContinuous}, we have for $y$ restricted to $B_R(x_0)$
\begin{equation*}
\abs{F(\phi_k + v^\ep,\frac{y}{\ep})- F_{\phi_k,x_k}(v^\ep,\frac{y}{\ep})}=
\abs{F_{\phi_k,y}(v^\ep,\frac{y}{\ep})- F_{\phi_k,x_k}(v^\ep,\frac{y}{\ep})}
\leq \rho_\phi(R),
\end{equation*}
and this holds anytime $v^\ep$ is $C^{1,1}$, but is \textbf{independent of the function} $v^\ep$ and $y$.   Thus restricting $R$ small enough so that $\rho_\phi(R)-\del/2\leq0$, we conclude that 
\begin{equation*}
F(\phi_k+v^\ep,\frac{y}{\ep})\leq 0 \ \text{ in }\ B_R(x_k).
\end{equation*}
Applying the comparison theorem, we see that for each $\ep$,
\begin{equation*}
\sup_{B_R(x_k)}(u^\ep-\phi_k-v^\ep)\leq \sup_{\real^n\setminus B_R(x_k)}(u^\ep-\phi_k-v^\ep).
\end{equation*}
First we keep $k$ fixed and take upper limits as $\ep\to0$.  Proposition \ref{prop:Corrector} implies the $v^\ep$ term vanishes and we obtain
\begin{equation*}
\sup_{B_R(x_k)}((u^\ep)^*-\phi_k)\leq \sup_{\real^n\setminus B_R(x_k)}((u^\ep)^*-\phi_k).
\end{equation*}
Finally using the uniform continuity of $u^\ep$ (hence $(u^\ep)^*$) and the uniformity of $\phi_k\to\phi$, we conclude 
\begin{equation*}
\sup_{B_R(x_0)}((u^\ep)^*-\phi)\leq \sup_{\real^n\setminus B_R(x_0)}((u^\ep)^*-\phi).
\end{equation*}
This contradicts the fact that the maximum of $u-\phi$ at $x_0$ was strict, and so we must have $\Bar F(\phi,x_0)\geq0$.

The proof that $(u^\ep)_*$ is a supersolution of (\ref{eq:PIDEaveraged}) follows analogously.  It is worth pointing out that due to the uniform continuity estimates on $u^\ep$ that are independent of $\ep$, both $(u^\ep)^*$ and $(u^\ep)_*$ are equal to $g$ on $\real^n\setminus D$.  Thus since (\ref{eq:PIDEaveraged}) has comparison, $(u^\ep)^*$, $(u^\ep)_*$, and $\bar u$ attain the same boundary data, and using that $\bar u$ is a solution, we conclude that 
\begin{equation*}
\bar u\leq (u^\ep)_*\leq(u^\ep)^*\leq \bar u.
\end{equation*}
This implies local uniform convergence to $\bar u$.
\end{proof}

\begin{rem}[Accounting of $\Tilde\Omega$]\label{rem:OmegaTilde}
First, $\phi$, and $l$ are fixed, and this gives a unique solution of the obstacle problems (\ref{eq:ObstacleBigDomain}), (\ref{eq:ObstacleSmallDomain}).  Then full measure sets which arise from the Subadditive Ergodic Theorem listed in Lemma \ref{lem:SubAdditiveLimits} are generated for a countable, dense family of $l$.  Lemma \ref{lem:VaradhanGoodSetTranslation} is used for a countable family of tolerances, $\eta$, in conjunction with Egoroff's Theorem, which is relevant for Lemma \ref{lem:PositiveContactLimit} via Lemma \ref{lem:SubCubePositiveContact}.  Finally all of the preceding sets are intersected along a countable family of $\eta$ and $l$, as well as the particular set from Lemma \ref{lem:SubAdditiveLimits} applied to the actual $\Bar F(\phi,x_0)$.  In order that this set will work at all $x_0$, Lemma \ref{lem:VaradhanGoodSetTranslation} is applied once more to account for translating the equation from $x_0$ back to $x=0$ (in Section \ref{sec:CorrectorAtGenericX}).  Another countable intersection is performed for these sets as well.  This gives rise to the set of full measure which is referred to as $\Omega_{\phi}$ appearing in the statement of Proposition \ref{prop:Corrector}.  At this point, the full measure set still depends on the test function, $\phi$, and so in the conclusion of Theorems \ref{thm:main} and \ref{thm:general}, there is one more countable intersection taken over the countable ``dense'' family of test functions mentioned in \ref{lem:CountablePhi}.  Then one has reached the actual set, $\Tilde \Omega$. 
\end{rem}

\section{Appendix-- Useful Facts}\label{sec:Appendix}
\setcounter{equation}{0}

In this section we briefly collect some useful facts used in this note.  Most items will be listed without proof, but specific references will be provided.

\subsection{Basic Properties of Equations (\ref{eq:PIDEmain}), (\ref{eq:PIDEaveraged}), and (\ref{eq:lRHS})}\label{sec:AppendixBasicProp}
Some of the major aspects of solutions to integro-differential equations used in this note are the Aleksandrov-Bakelman-Pucci estimate, regularity, and uniqueness.  Here we collect these results and state them without proof.  

\begin{thm}[ABP Type Estimate, Theorem 9.1 of \cite{GuSc-12ABParma}]\label{thm:ABP}
Suppose that $M^+_A$ is as in (\ref{eq:MplusSpecialA}) and
\begin{equation*}
\begin{cases}
\ds M^+_A(v,x)\geq -g(x) &\text{in}\ B\\
v\leq0 &\text{on}\ \real^n\setminus B.
\end{cases}
\end{equation*}
Then
\begin{equation*}
\sup_B\{v\}\leq \frac{C(n)}{\lam}\diam(B)(\norm{g}_{L^\infty})^{(2-\sig)/2}(\norm{g}_{L^n})^{\sig/2}.
\end{equation*}
\end{thm}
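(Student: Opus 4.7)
The strategy is to adapt the classical Aleksandrov-Bakelman-Pucci proof (as in \cite[Chapter 9]{GiTr-98}) to this nonlocal setting, leveraging the fact that the kernels of $M^+_A$ carry an explicit $yy^T$ numerator, which endows the operator with a second-order character up to a cutoff scale. First I would reduce to the case $v\geq 0$ with $v\equiv 0$ on $\real^n\setminus B$ (by replacing $v$ with its positive part, which remains a subsolution since $M^+_A$ is monotone in the zero order term), extend the problem to a larger ball of radius $\sim \diam(B)$, and introduce the concave envelope $\Gamma$ of $v$ on that larger ball. By standard convex analysis, there is an area-type bound
\[
(\sup_B v)^n \leq C(n)\diam(B)^n \int_{\{v=\Gamma\}\cap B} \det(-D^2\Gamma)(x)\,dx,
\]
so the task is reduced to controlling $\det(-D^2\Gamma)$ pointwise on the contact set in terms of $g$.

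At a contact point $x_0$, one has $v\leq \Gamma$ with equality at $x_0$, and concavity of $\Gamma$ gives $\del v(x_0,y)\leq \del\Gamma(x_0,y)\leq y^T D^2\Gamma(x_0)y$ for small $|y|$, where $D^2\Gamma(x_0)$ exists by Aleksandrov's theorem at a.e.\ contact point. I would then apply the viscosity definition to $v$ at $x_0$ with the replacement test function $\Gamma\,\chi_{B_r(x_0)}+v\,\chi_{\real^n\setminus B_r(x_0)}$, where $r$ is a cutoff to be chosen. The inequality $M^+_A(v,x_0)\geq -g(x_0)$ becomes, for some admissible $A$,
\[
\int_{|y|\leq r}\del\Gamma(x_0,y)\frac{y^T A y}{|y|^{n+\sig+2}}\,dy + \int_{|y|>r}\del v(x_0,y)\frac{y^T A y}{|y|^{n+\sig+2}}\,dy \geq -g(x_0).
\]
The near-field integral, once $\del\Gamma$ is replaced by $y^T D^2\Gamma(x_0)y$, is an explicit fourth-order tensor integral that by spherical symmetry equals a dimensional constant times $r^{2-\sig}$ times a nonnegative combination of $\Tr(A)\Tr(D^2\Gamma)$ and $\Tr(A D^2\Gamma)$; using $\Tr(A)\geq\lam$ and $D^2\Gamma\leq 0$ this piece is bounded above by $-c_n\lam r^{2-\sig}\Tr(-D^2\Gamma)(x_0)$. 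The far-field integral is estimated using $\del v(x_0,y)\leq 2\sup_B v$ against the kernel $|y|^{-n-\sig}$, giving $\leq C r^{-\sig}\sup_B v$. Rearranging yields
\[
c_n\lam\, r^{2-\sig}\Tr(-D^2\Gamma)(x_0) \leq g(x_0) + C r^{-\sig}\sup_B v,
\]
and optimizing $r$ to balance the right-hand side produces the interpolated pointwise estimate $\Tr(-D^2\Gamma)(x_0)^{(2-\sig)/2}(\sup_B v)^{\sig/2}\leq C\lam^{-1}g(x_0)$.

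To finish, I would apply the arithmetic-geometric mean inequality $\det(-D^2\Gamma)^{1/n}\leq \Tr(-D^2\Gamma)/n$, raise to the $n$-th power, substitute into the area formula, and H\"older-split the resulting integrand to produce the interpolated norms $\norm{g}_{L^\infty}^{(2-\sig)/2}\norm{g}_{L^n}^{\sig/2}$. The step I expect to be the main obstacle is the rigorous justification of the tail estimate and the replacement test function: the crude bound $\del v(x_0,y)\leq 2\sup_B v$ for $|y|>r$ must be used against the slowly decaying kernel $|y|^{-n-\sig}$ without producing spurious logarithms, which forces a contact-point-dependent choice of $r$; and the replacement function must be admissible in the viscosity definition despite $\Gamma$ being only $C^{1,1}$ with Hessian defined only almost everywhere. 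This requires working with a.e.\ contact points and approximating paraboloids, as in the Caffarelli-Silvestre framework, to convert the purely punctual Aleksandrov-type second-order information into a legitimate global test function against which the nonlocal operator can be evaluated.
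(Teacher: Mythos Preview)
The paper does not prove this statement: Theorem~\ref{thm:ABP} is placed in the Appendix as a background result quoted verbatim from \cite[Theorem 9.1]{GuSc-12ABParma}, with no argument given here. So there is no ``paper's own proof'' to compare against; you have sketched a proof for a result that the present paper merely imports.

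That said, your outline is in the spirit of the argument in the cited reference: the decisive point is precisely that the special structure $K(y)=y^TAy/|y|^{n+\sig+2}$ makes the near-field integral against $y^TD^2\Gamma(x_0)y$ an explicit fourth-moment tensor, which by isotropy reduces to a positive combination of $\Tr(A)\Tr(D^2\Gamma)$ and $\Tr(AD^2\Gamma)$ and hence is controlled from above by $-c_n\lam r^{2-\sig}\Tr(-D^2\Gamma)$. One slip to flag: your optimization over $r$ yields $(\lam T)^{\sig/2}M^{(2-\sig)/2}\leq Cg(x_0)$, not $T^{(2-\sig)/2}M^{\sig/2}\leq C\lam^{-1}g(x_0)$ as you wrote (the exponents on $T$ and $M$ are transposed). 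With the corrected pointwise bound, the area formula plus AM--GM gives $M\leq C d^{\sig/2}\lam^{-\sig/2}\|g\|_{L^{2n/\sig}}$, and then interpolating $\|g\|_{L^{2n/\sig}}\leq \|g\|_{L^\infty}^{(2-\sig)/2}\|g\|_{L^n}^{\sig/2}$ produces exactly the norm combination in the statement. Your instinct about the main technical obstacle is also correct: the delicate step is making the replacement $\Gamma\chi_{B_r}+v\chi_{\real^n\setminus B_r}$ admissible when $\Gamma$ is only $C^{1,1}$ at a.e.\ contact points, and controlling the tail in the annular region where $\Gamma$ is no longer globally concave; this is handled in \cite{GuSc-12ABParma} and is where most of the work lies.
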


\begin{thm}[Interior H\"older Regularity, Theorem 12.1 of \cite{CaSi-09RegularityIntegroDiff}]\label{thm:CaSiUniformContinuityInterior}
If $u$ is bounded on $\real^n$ and is simultaneously a subsolution and a supersolution in $B_1$ of respectively
\begin{align*}
 M^-u\leq C_0\ \ \text{and}\ \ M^+u&\geq -C_0  \ \ \text{in}\ B_1,
\end{align*}
then $u$ is uniformly $\gam$- H\"older continuous in $B_{1/2}$ with $\gam$ depending only on the dimension, a lower bound on $\sig$, and ellipticity:
\begin{equation*}
[u]_{C^\gam(B_{1/2})} \leq C(\sup_{\real^n}\{u\}+C_0).
\end{equation*}
\end{thm}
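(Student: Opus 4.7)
The plan is to follow the standard De~Giorgi / Krylov--Safonov strategy adapted to the nonlocal setting, namely to reduce the estimate to a diminishing-of-oscillation statement on dyadic balls and then iterate. After a rescaling $\tilde u(x)=u(x)/(\sup_{\real^n}|u|+C_0)$ (and using $\sigma$-homogeneity of the extremal operators to absorb the right-hand side into the constant), I may assume without loss of generality that $-1\le u\le 1$ on $\real^n$ and that $M^-u\le 1$, $M^+u\ge -1$ in $B_1$. The goal is then to exhibit a universal $\theta\in(0,1)$ such that $\osc_{B_{1/2}}u\le (1-\theta)\osc_{B_1}u$; feeding this into dyadic scales yields a pointwise H\"older modulus with an exponent $\gamma$ depending only on $n$, a lower bound on $\sigma$, $\lambda$, and $\Lambda$.

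The first main step is a point estimate of $L^\varepsilon$ type: if $v\ge 0$ on $\real^n$, $M^+v\ge -1$ in $B_1$, and $\inf_{B_{1/2}} v\le 1$, then $|\{v>t\}\cap B_{1/2}|\le C t^{-\varepsilon}|B_{1/2}|$ for some universal $\varepsilon>0$. I would obtain this by the Caffarelli--Silvestre special bump argument: construct a radial subsolution $\Psi$ supported in $B_2$, strictly positive on $\overline B_1$, with $M^-\Psi\ge 1$ outside $B_{1/4}$, and apply the ABP estimate of Theorem \ref{thm:ABP} to $v-\Psi$. Combined with a dyadic Calder\'on--Zygmund decomposition of $B_1$ (using small cubes inside the contact set of the ABP), one iterates to upgrade the bound to the desired power-decay of super-level sets.

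Given the $L^\varepsilon$ estimate, the decay of oscillation follows in the standard way. Suppose $-1\le u\le 1$ in $\real^n$ and $|\{u\le 0\}\cap B_{1/2}|\ge \tfrac12|B_{1/2}|$ (otherwise work with $-u$); apply the $L^\varepsilon$ inequality to $v=(1-u)/2+\text{a small correction controlling the tails outside }B_1$, noting that the tail correction is universally bounded because $|u|\le 1$ globally. This produces a universal $\theta>0$ with $\sup_{B_{1/4}}u\le 1-\theta$, hence $\osc_{B_{1/4}}u\le 2-\theta$. Rescaling this inequality to each dyadic ball $B_{4^{-k}}$ (using the $\sigma$-scaling invariance of $M^\pm$ and the fact that the tail of the rescaled function stays bounded because the oscillation decays geometrically on \emph{every} outer annulus by induction) produces $\osc_{B_{4^{-k}}}u\le C(1-\theta/2)^k$, which is exactly a H\"older bound with $\gamma=\log_{4}(1/(1-\theta/2))$.

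The principal obstacle, and the point at which the nonlocal theory genuinely departs from the second-order case, is the tail control in the inductive scaling step: when one rescales $u$ from $B_{4^{-k}}$ to $B_1$, the far-field behavior of $u$ contributes an extra inhomogeneous term into the equation via the integral against $K^{\alpha\beta}(x,y)$ for large $|y|$, and this term must be shown to remain uniformly bounded by $O(1)$ rather than blowing up. The standard fix, which I would adopt, is to prove the oscillation decay in a form robust to bounded external perturbations of the right-hand side (which is why one works with $M^\pm u \ge -C_0,\ \le C_0$ rather than $=0$), and to set up the dyadic induction hypothesis as a statement about the envelope of $u$ on \emph{all} of $\real^n$, not just $B_{4^{-k}}$; the uniform lower bound on $\sigma$ enters precisely to ensure the tail integral $\int_{|y|\ge 1}|y|^{-n-\sigma}dy$ is controlled independently of $\sigma$ approaching the critical exponent.
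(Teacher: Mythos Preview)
The paper does not supply its own proof of this theorem; it is quoted verbatim as Theorem~12.1 of \cite{CaSi-09RegularityIntegroDiff} in the Appendix, where the author explicitly states that ``most items will be listed without proof, but specific references will be provided.'' So there is nothing in the paper to compare your argument against.

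That said, your outline is a faithful summary of the Caffarelli--Silvestre proof in the cited reference: normalization, the $L^\varepsilon$ lemma via a barrier plus a nonlocal ABP-type covering argument, oscillation decay, and then the dyadic iteration with the crucial tail control that you correctly flag as the genuinely nonlocal difficulty. One small point worth tightening: you invoke ``the ABP estimate of Theorem~\ref{thm:ABP},'' but that is the Guillen--Schwab estimate for the special class $M^\pm_A$ with kernels of the form $y^TAy/|y|^{n+\sigma+2}$, whereas the H\"older theorem here is stated for the broader Caffarelli--Silvestre class $M^\pm_{CS}$. The proof in \cite{CaSi-09RegularityIntegroDiff} uses instead their own ABP-type covering lemma (their Theorem~8.7), which controls the supremum by a Riemann sum over a cube cover of the contact set rather than by an $L^n$ norm; that is the version you would need to cite to make the argument go through at the stated level of generality.
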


\begin{thm}[Boundary Regularity, Theorem 3.3 of \cite{CaSi-09RegularityByApproximation}]\label{thm:CaSiUniformContinuityBoundary}
Assume (\ref{eq:AssumptionEllipticityPointwise}).  Then the solutions of (\ref{eq:PIDEmain}), (\ref{eq:PIDEaveraged}), and (\ref{eq:lRHS}) are uniformly continuous with a modulus that only depends on $\lam$, $\Lam$, $\sig$, $n$, the domain, the boundary data, and $\norm{f^{\al\beta}}_\infty$.  Moreover if the boundary data is H\"older continuous, then so is the solution with a possibly different H\"older exponent.  
\end{thm}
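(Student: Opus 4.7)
The plan is to combine the interior Hölder estimate of Theorem \ref{thm:CaSiUniformContinuityInterior} with a boundary barrier argument, exactly as carried out in \cite{CaSi-09RegularityByApproximation}. Hypothesis (\ref{eq:AssumptionEllipticityPointwise}) yields the extremal bounds $M^+_{CS}(u,\cdot) \geq -\norm{f^{\al\beta}}_{L^\infty}$ and $M^-_{CS}(u,\cdot) \leq \norm{f^{\al\beta}}_{L^\infty}$ in the interior of the relevant domain, so Theorem \ref{thm:CaSiUniformContinuityInterior} already delivers a uniform Hölder modulus on every compact subdomain, with constants depending only on $\lam$, $\Lam$, $\sig$, $n$, and $\norm{f^{\al\beta}}_{L^\infty}$. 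Only the continuity up to the boundary needs to be supplied.

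For a fixed boundary point $x_0 \in \partial D$ and $\eta > 0$, uniform continuity of $g$ lets me pick $r > 0$ with $|g - g(x_0)| < \eta$ on $(\real^n \setminus D) \cap B_r(x_0)$. I would then construct a barrier $\Phi = \Phi_{x_0,\eta}$ satisfying $\Phi(x_0) = 0$, $\Phi \geq 2(\norm{g}_{L^\infty} + \norm{u}_{L^\infty})$ on $\real^n \setminus B_r(x_0)$, and $M^+_{CS}(\Phi, x) \leq -\norm{f^{\al\beta}}_{L^\infty}$ for $x \in D \cap B_r(x_0)$. Comparison (available under (\ref{eq:AssumptionEllipticityPointwise}) via Proposition \ref{prop:PropertiesOfFbar}-style reasoning) then gives $u(x) \leq g(x_0) + \eta + \Phi(x)$ on all of $\real^n$, and a symmetric lower barrier gives the matching bound. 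Sending $x \to x_0$ yields $|u(x) - g(x_0)| \to 0$ with a rate depending only on the modulus of $g$ and on $\lam, \Lam, \sig, n$ through the barrier.

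The main obstacle is producing $\Phi$. Note that no nonnegative $\Phi$ with $\Phi(x_0) = 0$ can satisfy $M^+_{CS}(\Phi, x_0) < 0$, since $\del\Phi(x_0, y) \geq 0$ forces $M^+_{CS}(\Phi, x_0) \geq 0$; hence the supersolution condition must hold only in the open interior of $D$ and degenerate at $x_0$. The standard workaround is a radial power profile anchored at a point $z_0 \notin \overline D$ close to $x_0$, of the form $\Phi(x) = A(|x - z_0|^{\gamma'} - |x_0 - z_0|^{\gamma'})_+$ (with a suitable truncation to secure the large-distance bound) and exponent $\gamma' > 0$ chosen small relative to $\sig$. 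The key ingredient is the explicit computation that for small $\gamma'$ the extremal $M^+_{CS}$ of this profile is strictly negative on $D \cap B_r(x_0)$ and of the correct order to absorb $-\norm{f^{\al\beta}}_{L^\infty}$ after scaling $A$, a calculation that uses only (\ref{eq:AssumptionEllipticityPointwise}), the homogeneity (\ref{eq:AssumptionScaling}), and $0 < \sig < 2$. When $D$ is merely bounded (with no exterior regularity) one localizes near each boundary point and absorbs the defect using boundedness of $u$; uniformity over $\partial D$ follows by compactness.

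Once continuity at each boundary point is uniform on $\partial D$, a standard telescoping between this boundary modulus and Theorem \ref{thm:CaSiUniformContinuityInterior}, applied on interior balls $B_{d(x)/2}(x)$ with $d(x) = \operatorname{dist}(x,\partial D)$, produces a uniform global modulus on $\overline D$ with the dependencies claimed in the statement. For the Hölder strengthening, when $g \in C^\gamma$ the barrier can be taken as a power function of exponent $\gamma' < \min(\gamma, \sig)$ sufficiently small, giving boundary Hölder regularity; combined with the interior Hölder estimate through the same telescoping this yields global Hölder continuity with a possibly diminished exponent.
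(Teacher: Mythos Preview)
The paper does not actually prove this theorem: it is stated in the Appendix as a known result, quoted directly from \cite{CaSi-09RegularityByApproximation} without proof. Your sketch is a faithful outline of the barrier-plus-interior-estimate argument that appears in that reference, so in spirit you are reproducing exactly what the paper defers to.

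A few small remarks on the sketch itself. First, invoking ``Proposition \ref{prop:PropertiesOfFbar}-style reasoning'' for comparison is misplaced, since that proposition concerns the effective operator $\bar F$; the comparison you need here is simply the one for $M^\pm_{CS}$, which is already available from \cite[Section 5]{CaSi-09RegularityIntegroDiff}. Second, your claim that for a merely bounded domain ``one localizes near each boundary point and absorbs the defect using boundedness of $u$'' is too optimistic: the barrier construction genuinely requires some exterior regularity of $\partial D$ (an exterior ball or cone condition) so that a center $z_0\notin\overline D$ can be placed uniformly close to each $x_0\in\partial D$. The dependence of the modulus on ``the domain'' in the statement is meant to absorb exactly this. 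Otherwise the outline is correct.
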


\begin{rem}
It is worth remarking that Theorems \ref{thm:CaSiUniformContinuityInterior} and \ref{thm:CaSiUniformContinuityBoundary} both carry over to the setting of (\ref{eq:AssumptionKernelsGS}), with extremal operators (\ref{eq:MminusSpecialA}), (\ref{eq:MplusSpecialA}).  The main details are discussed in \cite[Section 10]{GuSc-12ABParma}.
\end{rem}

\begin{lem}\label{lem:FPhiUnifContinuous}
The operator $F_{\phi,x}(v,y)$ is uniformly continuous in $x$, independent of $v$ and $y$.  That is there exists a modulus, $\rho_\phi$, depending only on $\phi$, such that if $v$ and $y$ are any function and any point for which $F(v,y)$ is well defined, then for any $x_1,x_2\in\real^n$, 
\begin{equation*}
\abs{F_{\phi,x_1}(v,y) - F_{\phi,x_2}(v,y)} \leq \rho_\phi(\abs{x_1-x_2}),
\end{equation*}
independent of $v$ and $y$.
\end{lem}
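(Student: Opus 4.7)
The plan is to exploit the fact that only the term $[L^{\al\beta}(\om)\phi(x)](y)$ in the definition of $F_{\phi,x}(v,y,\om)$ depends on the center point $x$. First I would observe that the $f^{\al\beta}(y,\om)$ term and the $[L^{\al\beta}(\om)v(y)](y)$ term are identical in $F_{\phi,x_1}(v,y,\om)$ and $F_{\phi,x_2}(v,y,\om)$, so by the elementary inequality $|\inf_\al\sup_\beta g_1^{\al\beta}-\inf_\al\sup_\beta g_2^{\al\beta}|\leq \sup_{\al,\beta}|g_1^{\al\beta}-g_2^{\al\beta}|$, the problem reduces to bounding
\[
\sup_{\al,\beta}\bigl|[L^{\al\beta}(\om)\phi(x_1)](y)-[L^{\al\beta}(\om)\phi(x_2)](y)\bigr|
\]
uniformly in $\al,\beta,y,\om$. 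Crucially, this reduced quantity does not see $v$ at all, which immediately delivers the required independence of the modulus from $v$ and from $y$.

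Next I would rewrite the difference as an integral, using the notation $\del\phi(x,z)=\phi(x+z)+\phi(x-z)-2\phi(x)$, so that the task becomes estimating
\[
\int_{\real^n}\bigl|\del\phi(x_1,z)-\del\phi(x_2,z)\bigr|\,K^{\al\beta}(y,z,\om)\,dz.
\]
Under either ellipticity regime (\ref{eq:AssumptionEllipticityPointwise}) or (\ref{eq:AssumptionKernelsGS}), the kernel satisfies $K^{\al\beta}(y,z,\om)\leq \Lam|z|^{-n-\sig}$ uniformly in $\al,\beta,y,\om$ (in the second regime because $A^{\al\beta}\leq \Lam\Id$). So it suffices to get a bound on $\int|\del\phi(x_1,z)-\del\phi(x_2,z)|\,|z|^{-n-\sig}\,dz$ in terms of $|x_1-x_2|$ and constants depending only on $\phi$.

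The main technical step, and really the only substantive one, is that neither of the two natural one-scale bounds suffices by itself: the $C^{1,1}$ bound $|\del\phi(x_i,z)|\leq [\nabla\phi]_{\mathrm{Lip}}|z|^2$ is not integrable at infinity against $|z|^{-n-\sig}$, while the Lipschitz bound $|\del\phi(x_1,z)-\del\phi(x_2,z)|\leq 4\|\nabla\phi\|_\infty|x_1-x_2|$ (valid since $\phi\in C^{1,1}\cap L^\infty$ forces $\nabla\phi\in L^\infty$ by standard interpolation) is not integrable near the origin once $\sig\geq 1$. The plan is therefore a two-scale split at a cutoff $r>0$: use the $C^{1,1}$ bound on $\{|z|\leq r\}$ and the Lipschitz bound on $\{|z|>r\}$, yielding a total estimate of the form $C_\phi\bigl(r^{2-\sig}+|x_1-x_2|\,r^{-\sig}\bigr)$.

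Finally I would optimize in $r$. Taking $r=|x_1-x_2|^{1/2}$ balances the two contributions and produces the H\"older modulus
\[
\rho_\phi(s) = C_\phi\, s^{(2-\sig)/2},
\]
which is a genuine modulus since $\sig\in(0,2)$, and whose constant depends only on $\|\phi\|_{L^\infty}$, $[\nabla\phi]_{\mathrm{Lip}}$, and the fixed structural parameters $\Lam,\sig,n$. Tracing back through Step 1, this is exactly the required uniform-in-$(v,y,\om)$ estimate for $|F_{\phi,x_1}(v,y)-F_{\phi,x_2}(v,y)|$.
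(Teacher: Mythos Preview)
Your proposal is correct and follows the same reduction as the paper: both observe that only the frozen term $[L^{\al\beta}\phi(x)](y)$ depends on $x$, apply the elementary $\inf\sup$ inequality, and reduce to the uniform equicontinuity of $x\mapsto [L^{\al\beta}\phi(x)](y)$ in $\al,\beta,y,\om$. The paper simply cites \cite[Lemma 4.2]{CaSi-09RegularityIntegroDiff} for that equicontinuity, whereas you supply an explicit two-scale proof yielding the concrete H\"older modulus $\rho_\phi(s)=C_\phi s^{(2-\sig)/2}$; this extra quantitative information is a mild bonus but not needed elsewhere in the paper.
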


\begin{proof}[Sketch of Proof of Lemma \ref{lem:FPhiUnifContinuous}]
The proof of this lemma is a direct application of the results found in \cite[Lemma 4.2]{CaSi-09RegularityIntegroDiff}, specifically it follows from the assertion that $L^{\al\beta}\phi(\cdot)$ is uniformly continuous, uniformly in $\al$ and $\beta$. 
We recall 
\begin{equation}\label{eq:FPhiXnotNew}
F_{\phi,x_0}(v,y)= \inf_{\al}\sup_{\beta} \big\{ f^{\al\beta}(y) + [L^{\al\beta}\phi(x_0)](y) + L^{\al\beta}v(y)\big\},
\end{equation}
where $[L^{\al\beta}\phi(x_0)](y)$ is defined in (\ref{eq:FrozenOperatorDefLinear}).  Thanks to the bounds on $K^{\al\beta}$ from (\ref{eq:AssumptionEllipticityPointwise}) and the result \cite[Lemma 4.2]{CaSi-09RegularityIntegroDiff}, we know that for y fixed, $[L^{\al\beta}\phi(x_0)](y)$ is a uniformly equicontinuous (in $x_0$) family in $\al$ and $\beta$.  Therefore, for each $\al,\beta$,
\begin{align*}
&\left(f^{\al\beta}(y) + [L^{\al\beta}\phi(x_1)](y) + L^{\al\beta}v(y)\right)
-\left(f^{\al\beta}(y) + [L^{\al\beta}\phi(x_2)](y) + L^{\al\beta}v(y)\right)\\
&= [L^{\al\beta}\phi(x_1)](y)-[L^{\al\beta}\phi(x_2)](y)\\
&\leq \rho_\phi(\abs{x_1-x_2}),
\end{align*}
for some modulus, $\rho_\phi$.  Due to the uniformity in $\al$ and $\beta$ the result holds under operations of taking the infimum and supremum, and hence for $F_{\phi,x_0}$.
\end{proof}

\begin{lem}[Comparison for (\ref{eq:PIDEaveraged})]\label{lem:ComparisonForFbar}
Given uniformly continuous boundary data, $g$, the equation (\ref{eq:PIDEaveraged}) has a unique solution.
\end{lem}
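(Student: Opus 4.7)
The plan is to derive uniqueness directly from the comparison principle already contained in Proposition \ref{prop:PropertiesOfFbar}, and to obtain existence via Perron's method adapted to nonlocal equations as in \cite{BaIm-07}, trapping the Perron function between two extremal barriers that match the data on $\real^n\setminus D$.

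For uniqueness, suppose $u_1$ and $u_2$ both solve (\ref{eq:PIDEaveraged}) with the same continuous data $g$. Since $\Bar F(0,\cdot)$ is bounded (inherited from (\ref{eq:AssumptionfBounded}) via ellipticity) and $\Bar F$ is elliptic in the Caffarelli--Silvestre sense by Proposition \ref{prop:PropertiesOfFbar}, Theorem \ref{thm:CaSiUniformContinuityBoundary} gives that $u_1$ and $u_2$ are continuous on $\real^n$. In particular each is simultaneously USC and LSC, so the comparison statement from Proposition \ref{prop:PropertiesOfFbar} applied in both directions yields
\begin{equation*}
\sup_D(u_1-u_2) \leq \sup_{\real^n\setminus D}(u_1-u_2) = 0 \quad \text{and} \quad \sup_D(u_2-u_1)\leq 0,
\end{equation*}
whence $u_1=u_2$.

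For existence, the first step is to build classical barriers. Ellipticity of $\Bar F$ gives, for any admissible $w$,
\begin{equation*}
\Bar F(0,x) + M^-(w,x) \leq \Bar F(w,x) \leq \Bar F(0,x) + M^+(w,x),
\end{equation*}
so if $C_0$ is a uniform bound for $|\Bar F(0,\cdot)|$, the extremal problems
\begin{equation*}
\begin{cases} M^+ w^+ = -C_0 \text{ in } D,\\ w^+ = g \text{ on } \real^n\setminus D,\end{cases}\qquad
\begin{cases} M^- w^- = C_0 \text{ in } D,\\ w^- = g \text{ on } \real^n\setminus D,\end{cases}
\end{equation*}
produce respectively a classical supersolution and a classical subsolution of (\ref{eq:PIDEaveraged}); their existence and continuity up to $\partial D$ under either (\ref{eq:AssumptionEllipticityPointwise}) or (\ref{eq:AssumptionKernelsGS}) are contained in Theorem \ref{thm:CaSiUniformContinuityBoundary}. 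Setting
\begin{equation*}
\bar u(x) := \sup\{v(x) : v \text{ is a USC subsolution of (\ref{eq:PIDEaveraged}) with } w^-\leq v\leq w^+ \text{ on } \real^n\},
\end{equation*}
the standard stability of sub and super solutions under $\sup^*$ and $\inf_*$ in the nonlocal framework of \cite{BaIm-07} shows that $(\bar u)^*$ is a USC subsolution and $(\bar u)_*$ is an LSC supersolution, both sandwiched by $w^-$ and $w^+$. The comparison principle from Proposition \ref{prop:PropertiesOfFbar} then forces $(\bar u)^*=(\bar u)_*$ in $D$, and the sandwich between $w^\pm$ forces this common value to coincide with $g$ continuously on $\real^n\setminus D$; the resulting function is the desired solution.

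The main obstacle I anticipate is the boundary behavior, since in the nonlocal setting one cannot use local barriers at $\partial D$: the auxiliary functions must be genuinely defined on all of $\real^n$ and agree with $g$ throughout the complement while controlling $\Bar F$ from one side. This is precisely what the reduction to the extremal equations for $M^\pm$ accomplishes, and the boundary regularity supplied by Theorem \ref{thm:CaSiUniformContinuityBoundary} under the paper's hypotheses is exactly what is needed to close the Perron construction; after the barriers are in place, the remainder of the argument is a routine application of stability plus the comparison principle already granted to $\Bar F$.
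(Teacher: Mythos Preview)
Your proposal is correct and follows essentially the same approach as the paper: the paper's proof simply cites \cite[Section 5]{CaSi-09RegularityIntegroDiff} as a black box once ellipticity of $\Bar F$ is established, and what you have written is precisely an unpacking of that reference---comparison for uniqueness, and Perron's method between barriers solving the extremal equations for existence. The only difference is the level of detail; your argument is sound and is exactly the content being invoked.
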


\begin{proof}[Proof of Lemma \ref{lem:ComparisonForFbar}]
This is a direct application of the comparison results of \cite[Section 5]{CaSi-09RegularityIntegroDiff} once it has been established that $\Bar F$ is indeed elliptic with respect to $M^-$ and $M^+$.
\end{proof}

\subsection{Pushing The Subadditive Limit Around By $\tau_{x/\ep}\om$}\label{sec:AppendixPushLimitAround}
The following is an incredibly useful lemma for nonlinear stochastic homogenization.  It seems to have been first used for this purpose in \cite[Proof of Theorem 2.1]{KoReVa-06Homog}.  The need for such a result stems from the fact that it is natural to prove results at different spacial locations by using the translations $u^\ep(\cdot+x,\om)=u^\ep(\cdot,\tau_{x/\ep}\om)$.  However, one must be careful that $\tau_{x/\ep}\om$ is still in an appropriate subset of $\Om$.  We state the results here and copy the proof presented in \cite[Lemma 5.7]{Schw-09}.

\begin{lem}[Kosygina-Rezakhanlou-Varadhan]\label{lem:VaradhanGoodSetTranslation}
Let $G_\eta$ be such that $\P(G_\eta)\to1$ as $\eta\to0$.  Then there exists a function, $\rho(\eta)$, and a set of full measure, $\Om_\eta$, such that for $\ep$ chosen small enough:
\begin{align}
&\rho(\eta)\to0\ \text{as}\ \eta\to0,\\
&\forall \om\in\Om_\eta\ \text{and}\ \forall x\in B_{1/\ep}(0),\ \text{there is}\ \hat x\  \text{such that}\ 
&\hat x\in \{x:\tau_{x}\om\in G_\eta\}\intersect B_{1/\ep}(0),\\
&\text{and}\ \abs{x-\hat x}\leq \frac{\rho(\eta)}{\ep}.
\end{align}
\end{lem}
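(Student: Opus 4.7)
The plan is to combine Wiener's multi-parameter pointwise ergodic theorem with a volume/pigeonhole argument on a cover of $B_{1/\ep}(0)$. First, for each rational $\eta\in(0,1)$, apply the ergodic theorem to the indicator $\chi_{G_\eta^c}$, using the ergodic $\real^n$-action $\tau$, to obtain a full-measure set $\Om_\eta\subset\Om$ on which
\begin{equation*}
\lim_{\ep\to 0}\frac{1}{\abs{B_{1/\ep}(0)}}\int_{B_{1/\ep}(0)}\chi_{G_\eta^c}(\tau_z\om)\,dz=\P(G_\eta^c)=:\del(\eta),
\end{equation*}
and note that $\del(\eta)\to 0$ as $\eta\to 0$ by the hypothesis $\P(G_\eta)\to 1$. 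The final $\Om_\eta$ claimed in the lemma will be this set; extension to arbitrary (non-rational) $\eta$ is routine by using $\rho(\eta')$ for a rational $\eta'$ slightly larger than $\eta$.

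Second, define $\rho(\eta):=C_n\,\del(\eta)^{1/n}$ for a dimensional constant $C_n$ chosen below; by construction $\rho(\eta)\to 0$. Cover $B_{1/\ep}(0)$ by finitely many balls $\{B_{\rho(\eta)/\ep}(x_j^\ep)\}_{j=1}^{N(\eta)}$ centered at points $x_j^\ep\in B_{1/\ep}(0)$, with the number of balls bounded by a constant times $\rho(\eta)^{-n}$. Choose $C_n$ large enough so that the volume of any single sub-ball $B_{\rho(\eta)/\ep}(x_j^\ep)\intersect B_{1/\ep}(0)$ is strictly greater than $2\del(\eta)\abs{B_{1/\ep}(0)}$; this is possible since both quantities scale like $\ep^{-n}$ and the ratio depends only on $n$ and $\rho(\eta)^n/\del(\eta)$. (A small constant factor is lost for the sub-balls near $\partial B_{1/\ep}(0)$, and is absorbed into $C_n$.)

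Third, for $\om\in\Om_\eta$ and $\ep$ sufficiently small depending on $\om,\eta$, the ergodic limit guarantees the total Lebesgue measure of ``bad'' points $\{z\in B_{1/\ep}(0):\tau_z\om\notin G_\eta\}$ is at most $2\del(\eta)\abs{B_{1/\ep}(0)}$. By the choice of $\rho$, this total ``bad'' measure is strictly smaller than the volume of any single sub-ball, so no sub-ball can consist entirely of bad points; hence each sub-ball contains at least one $\hat x_j^\ep$ with $\tau_{\hat x_j^\ep}\om\in G_\eta$. Given any $x\in B_{1/\ep}(0)$, there exists $j$ with $x\in B_{\rho(\eta)/\ep}(x_j^\ep)$, and $\hat x:=\hat x_j^\ep$ then satisfies $\abs{x-\hat x}\leq 2\rho(\eta)/\ep$; absorbing the factor $2$ into $\rho$ yields the claim.

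The bulk of the proof is bookkeeping, and the main subtlety (rather than a true obstacle) is the application of Wiener's ergodic theorem over the expanding family $B_{1/\ep}(0)$ together with the boundary correction in the cover; both are standard in the stochastic homogenization literature. The essential conceptual content is simply that the ergodic theorem forces any large enough sub-region to have density close to $\P(G_\eta)$, so by pigeonhole a scale of order $\del(\eta)^{1/n}/\ep$ (a vanishing fraction of the macroscopic scale $1/\ep$) already guarantees a good translation in every sub-ball.
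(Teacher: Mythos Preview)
Your proof is correct and follows essentially the same route as the paper: apply the pointwise ergodic theorem to the indicator of $G_\eta$ (or its complement) to bound the measure of ``bad'' translations in $B_{1/\ep}(0)$, then use a volume argument to conclude that every point is within $\rho(\eta)/\ep$ of a ``good'' translation, with $\rho(\eta)\sim \del(\eta)^{1/n}$. The only cosmetic difference is that the paper phrases the volume step via outer regularity of Lebesgue measure (cover the bad set by a basic set $E$ of small measure and bound the inradius of $E$), whereas you run it as a covering/pigeonhole argument; these are equivalent, and if anything your version is a bit more direct.
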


\begin{proof}[Proof of Lemma \ref{lem:VaradhanGoodSetTranslation}]
This proof will be a consequence of the Ergodic Theorem combined with the regularity of Lebesgue measure on $\real^n$. 

We begin by applying the ergodic theorem to the function $F_\eta$, defined as:
\begin{equation*}
F_\eta(\om) = 
\begin{cases}
1 &\text{ if } \om\in G_\eta \\
0 &\text{ otherwise }.
\end{cases}
\end{equation*}
The ergodic theorem says there exists $\Om_\eta$ with $P(\Om_\eta)=1$ and $\forall\om\in\Om_\eta$
\begin{equation*}
\lim_{r\to \infty} \frac{1}{\abs{B_r}} \int_{B_r} F_\eta(\tau_{x,s}\om)dxds = \int_{\Om}F_\eta(\om)d\P(\om).
\end{equation*}
Specifically, for $\ep$ small enough and $\forall\om\in\Om_\eta$:
\begin{equation*}
\frac{ \abs{ \{x:\tau_{x}\om\in G_\eta \}\cap B_{1/\ep} } }{ \abs{ B_{1/\ep} } } \geq P(G_\eta)-\eta
\geq 1-2\eta.
\end{equation*}
In other words, 
\begin{equation*}
\abs{ \{x:\tau_{x}\om\in G_\eta \}\cap B_{1/\ep} } \geq (1-2\eta)\abs{ B_{1/\ep}}.  
\end{equation*}

In order to find the function $m(\eta)$, we will use the regularity property of Lebesgue measure.  Let us call the good set $G = \{x:\tau_{x}\om\in G_\eta \}\cap B_{1/\ep}$ and the bad set will be $G^c\cap B_{1/\ep}$.  The outer regularity of Lebesgue measure says that there is a basic set (a finite union of balls), 
\begin{equation*}
E=\bigcup_{i=1}^M B_{r_i},
\end{equation*}
such that $G^c\cap B_{1/\ep}\subset E$ and  
\begin{equation*}
\abs{E}-\eta \leq \abs{G^c\cap B_{1/\ep}}.
\end{equation*}
We also know from above that 
\begin{equation*}
\abs{G^c\cap B_{1/\ep}}=\abs{B_{1/\ep}}-\abs{G}\leq 2\eta\abs{B_{1/\ep}}.
\end{equation*}
Hence
\begin{equation*}
\abs{E}\leq 2\eta\abs{B_{1/\ep}} + \eta.
\end{equation*}
The worst case scenario regarding the distance from $x\in E$ to $\hat x \in E^c\cap B_{1/\ep}$ is when $E$ is one ball, and $x$ is at its center.  Thus 
\begin{align*}
&\abs{x-\hat x}\leq \bigl(\frac{2\eta C_{n}}{\ep^{n}}+\eta\bigr)^{1/(n)}\\
&=\bigl(\frac{(2C_{n}\eta+\ep^{n}\eta)}{\ep^{n}}\bigr)^{1/(n)} \leq \frac{(3C_{n}\eta)^{1/(n)}}{\ep}\\
&:=\frac{m(\eta)}{\ep}. 
\end{align*}
Which completes the proof of the lemma.
\end{proof}

\subsection{Facts About The Obstacle Problem}\label{sec:AppendixObstacleProb}
Finally, we collect a few facts about the obstacle problem which are useful above.  The first is the representation of the least supersolution (\ref{eq:ObstacleGeneric}) as the solution of a variational inequality:

\begin{equation}\label{eq:VariationalInequality}
\begin{cases}
\max(F_{\phi,x_0}(U^l_A,x)-l,0-U^l_A)=0\  &\text{in}\ A\\
U^l_A=0\ &\text{on}\ \real^n\setminus A.
\end{cases}
\end{equation}
Furthermore, by comparison between $U^l_A$ and the obstacle at points on the contact set, we know that for all $x\in K(A)$
\begin{equation}\label{eq:ObstacleEquationContactRHS}
F_{\phi,x_0}(U^l_A,x)\geq F_{\phi,x_0}(0,x),
\end{equation}
where on the right hand side of the inequality the operator is applied to the constant, $0$, function.

The first result we mention about the obstacle problem is its a priori uniform continuity.  This follows as a direct analog to the nonlocal setting of the proof provided in \cite[Theorem 2.1 (i)]{CaSoWa-05} using the penalization method of approximating the obstacle solution and so we omit the proof.

\begin{lem}[Regularity For The Obstacle Problem]\label{lem:ObstacleUniformModulus}
There exists an exponent, $\gam$, depending only on $\lam$, $\Lam$, $\sigma$, $n$, and $A$, such that $U^l_A$ is H\"older continuous with exponent $\gam$.
\end{lem}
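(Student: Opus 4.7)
The plan is to reduce the regularity of $U^l_A$ to an application of Theorem \ref{thm:CaSiUniformContinuityInterior} by establishing, in the viscosity sense, uniform bounds of the form $M^-(U^l_A, x) \leq C_0$ and $M^+(U^l_A, x) \geq -C_0$ throughout $A$, where $C_0$ depends only on $\lam$, $\Lam$, $\sig$, $n$, $A$, $\phi$, and $l$. The one preliminary is that $F_{\phi,x_0}(0, \cdot)$ is uniformly bounded on $A$, which follows from (\ref{eq:AssumptionfBounded}), the hypothesis $\phi \in C^{1,1}(\real^n) \cap L^\infty(\real^n)$, and the ellipticity estimates on $K^{\al\beta}$, since each $[L^{\al\beta}\phi(x_0)](x)$ is then bounded uniformly in $\al, \beta, x$.

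First I would derive the supersolution bound. The variational inequality (\ref{eq:VariationalInequality}) shows that $U^l_A$ is a viscosity supersolution of $F_{\phi,x_0}(\cdot, x) = l$ throughout $A$, and the ellipticity of $F_{\phi,x_0}$ then yields
\begin{equation*}
M^-(U^l_A - 0, x) \leq F_{\phi,x_0}(U^l_A, x) - F_{\phi,x_0}(0, x) \leq l - F_{\phi,x_0}(0, x) \leq C_0.
\end{equation*}
The corresponding subsolution bound on the non-contact set is equally direct: on $\{U^l_A > 0\}$ the variational inequality reduces to $F_{\phi,x_0}(U^l_A, x) = l$, so $M^+(U^l_A, x) \geq l - F_{\phi,x_0}(0, x) \geq -C_0$ there.

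The delicate step is the subsolution bound at a point $x_0$ in the coincidence set $K(A) = \{U^l_A = 0\}$, since the variational inequality is satisfied trivially by its $-U^l_A$ term and provides no equation-level information. I would argue structurally: $U^l_A \geq 0$ globally and $U^l_A(x_0) = 0$ make $x_0$ a \emph{global} minimum of $U^l_A$. For any $\varphi \in C^2$ touching $U^l_A$ from above at $x_0$, the standard nonlocal extended test function
\begin{equation*}
\tilde v := \varphi \, \Indicator_{B_r(x_0)} + U^l_A \, \Indicator_{\real^n \setminus B_r(x_0)}
\end{equation*}
satisfies $\tilde v(x_0) = 0$ and $\tilde v \geq 0$ on all of $\real^n$, using $\varphi \geq U^l_A \geq 0$ on $B_r(x_0)$ from the upper touching together with the positivity of $U^l_A$ outside. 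Hence $\del \tilde v(x_0, y) = \tilde v(x_0+y) + \tilde v(x_0-y) \geq 0$ for every $y \in \real^n$, and taking the supremum over admissible nonnegative kernels yields $M^+(\tilde v, x_0) \geq 0 \geq -C_0$, which is exactly the viscosity subsolution condition.

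With the two extremal bounds in place throughout $A$, Theorem \ref{thm:CaSiUniformContinuityInterior} gives interior H\"older continuity of $U^l_A$ with exponent $\gam$ depending only on $\lam, \Lam, \sig, n$. For the boundary modulus I would construct barriers adapted to the exterior geometry of $\partial A$ (the cubes and balls arising in Section \ref{sec:SubadditiveAtX0} satisfy an exterior ball condition, which is more than enough) and compare with $U^l_A$ to control the oscillation near $\partial A$; alternatively, one can apply Theorem \ref{thm:CaSiUniformContinuityBoundary} to a penalized approximation and pass to the limit. The main obstacle in this whole scheme is precisely the contact-set subsolution bound, which cannot be read off the equation: in the classical penalization route of \cite{CaSoWa-05} one instead approximates $U^l_A$ by solutions of $F_{\phi,x_0}(U^\del, x) = l + \beta_\del(U^\del)$ with $\beta_\del \leq 0$ penalizing negative values, and then shows that $\beta_\del(U^\del)$ stays uniformly bounded below via the maximum principle at an interior minimum of $U^\del$, where $[L^{\al\beta}U^\del]$ is nonnegative. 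The structural argument above avoids that limit passage by exploiting that the obstacle is the constant~$0$.
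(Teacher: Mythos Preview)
Your proposal is correct and in fact gives a more direct argument than the paper, which does not write out a proof but simply refers to the penalization method of \cite[Theorem 2.1(i)]{CaSoWa-05}: approximate $U^l_A$ by solutions $U^\del$ of $F_{\phi,x_0}(U^\del,x) = l + \beta_\del(U^\del)$, bound $\beta_\del(U^\del)$ uniformly via the maximum principle at an interior minimum of $U^\del$, apply Theorems \ref{thm:CaSiUniformContinuityInterior} and \ref{thm:CaSiUniformContinuityBoundary} to each $U^\del$, and pass to the limit. You describe this route yourself in your final paragraph. Your primary argument instead bypasses the approximation by exploiting the zero obstacle directly: on the contact set $U^l_A$ sits at its global minimum, so for any upper test function the extended second difference $\del\tilde v(x_0,\cdot)$ is nonnegative and $M^+(\tilde v,x_0)\geq 0$ follows without invoking any equation. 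This is cleaner in the present setting, while the penalization approach extends more readily to nonconstant obstacles where no such global-minimum structure is available. One small point on the boundary step: your barrier construction is fine, but note that Theorem \ref{thm:CaSiUniformContinuityBoundary} as stated applies to equations rather than obstacle problems, so to invoke it you would indeed need to pass through the penalized approximations---which is exactly how the paper's reference handles both interior and boundary regularity simultaneously.
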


\begin{lem}[Translation]\label{lem:ObstacleTranslation}
The obstacle solution, $U^l_A$, satisfies the translation property: for any $z\in\real^n$, on the set $A+z$,
\begin{equation*}
U^l_{A+z}(\cdot,\om) = U^l_{A}(\cdot - z,\tau_z\om).
\end{equation*}
\end{lem}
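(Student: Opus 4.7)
The plan is to set up a bijection between the admissible supersolution classes defining $U^l_{A+z}(\cdot,\om)$ and $U^l_A(\cdot,\tau_z\om)$ via the shift $u \mapsto u(\cdot - z)$, and then take the infimum on both sides. The only real content is checking that the frozen operator $F_{\phi,0}$ transforms correctly under this shift thanks to the stationarity assumptions (\ref{eq:AssumptionStationaryK})--(\ref{eq:AssumptionStationaryf}).

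First I would record the pointwise identity
\begin{equation*}
F_{\phi,0}\bigl(u(\cdot-z),\,y,\,\om\bigr) \;=\; F_{\phi,0}\bigl(u,\,y-z,\,\tau_z\om\bigr),
\end{equation*}
which is proved term by term in the definition (\ref{eq:FrozenOperatorDef}). For the forcing term, (\ref{eq:AssumptionStationaryf}) gives $f^{\al\beta}(y,\om)=f^{\al\beta}(y-z,\tau_z\om)$. For the frozen nonlocal piece, since $[L^{\al\beta}(\om)\phi(0)](y)=\int\del\phi(0,w)K^{\al\beta}(y,w,\om)\,dw$, assumption (\ref{eq:AssumptionStationaryK}) applied with base point $y-z$ yields $K^{\al\beta}(y,w,\om)=K^{\al\beta}(y-z,w,\tau_z\om)$, and hence $[L^{\al\beta}(\om)\phi(0)](y)=[L^{\al\beta}(\tau_z\om)\phi(0)](y-z)$. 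For the variable nonlocal piece, a change of variable in the integrand combined with the same kernel identity gives $[L^{\al\beta}(\om)u(\cdot-z)](y)=[L^{\al\beta}(\tau_z\om)u](y-z)$. Taking $\inf_\al\sup_\beta$ on both sides (the infimum and supremum do not act on $y$ or $\om$) yields the claimed identity.

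Next I would use this to translate admissibility. If $u$ is admissible for $U^l_A(\cdot,\tau_z\om)$, meaning $F_{\phi,0}(u,y',\tau_z\om)\leq l$ in $A$ (in the viscosity sense) and $u\geq 0$ in $\real^n$, then setting $v(y)=u(y-z)$ and $y=y'+z\in A+z$, the identity above gives $F_{\phi,0}(v,y,\om)\leq l$ in $A+z$, with $v\geq 0$ in $\real^n$. The converse is identical, run with $-z$ in place of $z$ and $\tau_z\om$ in place of $\om$. Thus the admissible class for $U^l_{A+z}(\cdot,\om)$ is exactly the shift by $z$ of the admissible class for $U^l_A(\cdot,\tau_z\om)$. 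Taking infima commutes with the shift, which gives $U^l_{A+z}(y,\om)=U^l_A(y-z,\tau_z\om)$ for every $y\in A+z$.

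I expect no serious obstacle: the argument is entirely mechanical once the pointwise identity is written out. The only minor care needed is to confirm that the translation preserves the viscosity sense — i.e.\ that if $u-\psi$ has a local max (resp.\ min) at $y'$ with an admissible test function, then $u(\cdot-z)-\psi(\cdot-z)$ has a local max (resp.\ min) at $y'+z$, which is immediate. A symmetric reverse inequality argument (applying the forward direction to $\tau_z\om$ with shift $-z$) makes clear that the bijection is onto, so equality rather than mere $\leq$ is obtained.
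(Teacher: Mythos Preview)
Your proposal is correct and follows essentially the same approach as the paper: use the stationarity of $f^{\al\beta}$ and $K^{\al\beta}$ to show that the shift $u\mapsto u(\cdot-z)$ carries admissible supersolutions for $U^l_A(\cdot,\tau_z\om)$ to admissible supersolutions for $U^l_{A+z}(\cdot,\om)$, and then conclude by the least-supersolution characterization. The paper is terser (it drops the frozen term $[L^{\al\beta}\phi(0)](y)$ as ``without loss of generality'' and only writes out one inequality), whereas you spell out the term-by-term identity and the bijection explicitly, but the content is the same.
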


\begin{proof}[Proof of Lemma \ref{lem:ObstacleTranslation}]
Without loss of generality we take $F_{\phi,x_0}$ to simply be $F$ (the argument does not see any dependence on $[L^{\al\beta}\phi(x_0)](x)$).  We will show that $U^l_{A+z}(\cdot,\om) \leq U^l_{A}(\cdot - z,\tau_z\om)$, and the reverse inequality follows similarly.  The equation for $U^l_A$ and the stationarity of $F$ implies that for $y\in A+z$, hence $y=x+z$ for some $x\in A$, we have
\begin{equation*}
F(U^l_A(\cdot-z,\tau_z\om),y,\om)=F(U^l_A(\cdot-z,\tau_z\om),x+z,\om) = F(U^l_A(\cdot,\tau_z\om),x,\tau_{z}\om).
\end{equation*}
Hence from the equation for $U^l_A$, we have
\begin{equation}
F(U^l_A(\cdot-z,\tau_z\om),y,\om)\leq l.
\end{equation}
Moreover, $U^l_A(\cdot-z,\tau_z\om)\geq 0$ in $\real^n$.  Since $U^l_{A+z}$ is the least such supersolution, we conclude
\begin{equation*}
U^l_{A+z}(\cdot,\om)\leq U^l_A(\cdot-z,\tau_z\om).
\end{equation*}
\end{proof}

\begin{lem}[Monotonicity In The Domain]\label{lem:ObstacleMonotonicity}
If $A\subset B$, then $U^l_A\leq U^l_B$.
\end{lem}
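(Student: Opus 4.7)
The plan is to prove this via the variational characterization of the obstacle solution directly from its definition (\ref{eq:ObstacleGeneric}). Recall that $U^l_A$ is defined as the infimum over the admissible class
\[
\mathcal{A}_A := \{u : F_{\phi,0}(u,y,\omega) \leq l \text{ in } A,\ u \geq 0 \text{ in } \mathbb{R}^n\},
\]
and analogously $U^l_B$ is the infimum over $\mathcal{A}_B$. The entire argument reduces to showing the set inclusion $\mathcal{A}_B \subset \mathcal{A}_A$.

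First I would observe that if $u \in \mathcal{A}_B$, then $u$ is a supersolution of $F_{\phi,0}(u,y,\omega) \leq l$ on all of $B$. Since $A \subset B$, the localization property of viscosity supersolutions (the defining inequality at a point depends only on a touching test function at that point and the global values of $u$, and this is inherited on any subdomain) immediately gives that $u$ is also a supersolution of the same inequality on $A$. The obstacle constraint $u \geq 0$ in $\mathbb{R}^n$ is identical in both definitions, so $u \in \mathcal{A}_A$.

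Taking infimum over the smaller class $\mathcal{A}_B$ can only yield something at least as large as taking infimum over the larger class $\mathcal{A}_A$, hence $U^l_A \leq U^l_B$ pointwise. There are no real obstacles here: the only subtlety worth flagging is that one should interpret the infimum in the viscosity-theoretic sense (as producing the least supersolution, cf.\ the variational formulation (\ref{eq:VariationalInequality})), so that the infimum of a family of supersolutions is again a supersolution — but this is standard and justified by the Perron-type construction used implicitly in defining $U^l_A$. No appeal to stationarity, ergodicity, or the special structure of $F_{\phi,0}$ is needed; the lemma is a soft consequence of the definition.
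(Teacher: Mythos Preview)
Your argument is correct and is essentially the same as the paper's: the paper observes that $U^l_B$ itself is an admissible supersolution on $A$ (since $A\subset B$) with $U^l_B\geq 0$, hence $U^l_A\leq U^l_B$ by the least-supersolution definition. Your formulation via the inclusion $\mathcal{A}_B\subset\mathcal{A}_A$ is the same idea stated at the level of admissible classes rather than applied to the single function $U^l_B$.
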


\begin{proof}[Proof of Lemma \ref{lem:ObstacleMonotonicity}]
As in the previous proof, we work without loss of generality with $F$ instead of $F_{\phi,x_0}$.  Since $A\subset B$, $F(U^l_B,x)\leq l$ in $A$ and $U^l_B\geq0$ in $\real^n$.  Since $U^l_A$ is the least such supersolution, we conclude $U^l_A\leq U^l_B$.
\end{proof}

\begin{lem}[Monotonicity In The RHS]\label{lem:ObstacleMonotonicityInRHS}
If $l_1\leq l_2$, then $U^{l_1}_A\geq U^{l_2}_A$.
\end{lem}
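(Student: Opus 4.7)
The plan is to argue directly from the definition of $U^l_A$ as the least supersolution in (\ref{eq:ObstacleGeneric}), exploiting the monotonicity of the admissible class with respect to the right hand side $l$. Concretely, if we denote
\begin{equation*}
\mathcal{A}^l := \{u : F_{\phi,0}(u,y,\om) \leq l \text{ in } A \text{ and } u \geq 0 \text{ in } \real^n\},
\end{equation*}
then the first step is to observe that $l_1 \leq l_2$ immediately gives $\mathcal{A}^{l_1} \subset \mathcal{A}^{l_2}$, since any $u$ with $F_{\phi,0}(u,y,\om) \leq l_1$ automatically satisfies $F_{\phi,0}(u,y,\om) \leq l_2$, and the nonnegativity constraint is independent of $l$.

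The second step is simply to take infima: since the admissible class for $l_2$ is larger, the infimum of that class is no greater than the infimum of the class for $l_1$. That is,
\begin{equation*}
U^{l_2}_A = \inf_{u \in \mathcal{A}^{l_2}} u \ \leq\ \inf_{u \in \mathcal{A}^{l_1}} u = U^{l_1}_A,
\end{equation*}
which is the claimed inequality $U^{l_1}_A \geq U^{l_2}_A$. There is essentially no obstacle here: the statement is a formal monotonicity of the obstacle problem with respect to its right hand side and follows tautologically from the variational characterization, in complete parallel with the domain monotonicity argument given in the proof of Lemma \ref{lem:ObstacleMonotonicity} (where one uses that $U^l_B$ is admissible in $A$ whenever $A \subset B$). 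If one preferred a pointwise comparison argument in the spirit of (\ref{eq:VariationalInequality}), one could equivalently note that $U^{l_1}_A$ is itself a supersolution of $F_{\phi,0}(u,y,\om) \leq l_2$ in $A$ satisfying $u \geq 0$ in $\real^n$, and then conclude $U^{l_2}_A \leq U^{l_1}_A$ from the fact that $U^{l_2}_A$ is the least such function.
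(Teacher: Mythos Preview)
Your proof is correct and takes essentially the same approach as the paper: observe that any function admissible for $l_1$ is automatically admissible for $l_2$, and then pass to the infimum. The paper's proof is just a terser version of what you wrote.
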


\begin{proof}[Proof of Lemma \ref{lem:ObstacleMonotonicityInRHS}]
We notice that for any $u$ solving $F_{\phi,x_0}(u,x)\leq l_1$, then also $u$ solves $F_{\phi,x_0}(u,x)\leq l_2$.  Hence taking the infimum over all such supersolutions, we find
\begin{equation*}
U^{l_2}_A\leq U^{l_1}_A.
\end{equation*}

\end{proof}


\bibliography{../refs}
\bibliographystyle{plain}
\end{document}